\documentclass[reqno]{amsart}
\usepackage{preamble}

\author{Juliana Curtis}
\address[\textsc{Curtis}]{University of Washington}
\email{jacca@uw.edu}
\author{Tuong Le}
\address[\textsc{Le}]{Princeton University}
\email{tuongle@princeton.edu}
\author{Chayim Lowen}
\address[\textsc{Lowen}]{Princeton University}
\email{chayiml@princeton.edu}

\title{On smooth combinatorial products of simplices}

\begin{document}

\begin{abstract}
    We show that every lattice weak Minkowski summand of a smooth polytope combinatorially isomorphic to a product of simplices has a quadratic triangulation. This is achieved by identifying this class of polytopes with the class of Nakajima polytopes.
    We combine these results to give a new proof that two lattice weak Minkowski summands of the same such smooth polytope are relatively IDP.
    Along the way, we prove a structure theorem for simple polytopes whose 2-faces are triangles and trapezoids.
\end{abstract}

\maketitle

\section{Introduction}

We will prove a number of theorems relating to smooth polytopes whose combinatorial type is that of a product of simplices.
Our first result is a local-to-global principle for faces of a simple polytope.
It requires no lattice structure.
The remaining results are motivated by covering properties for lattice polytopes.
Recall that a lattice polytope is smooth if the set of primitive edge directions at each of its vertices forms part of a basis of the ambient lattice. 
It is generally anticipated that smooth polytopes have desirable covering properties, such as the integer decomposition property (IDP). 
In this vein, we prove that smooth combinatorial products of simplices and their lattice weak Minkowski summands (which \emph{need not} be smooth) admit regular, flag unimodular triangulations. 

We begin with a purely structural theorem for simple polytopes whose two-dimensional faces are triangles and trapezoids. The statement is best understood as the second in a sequence of four, the rest of which can be found in the literature. It is a statement about \emph{arbitrary real} (i.e.\ not necessarily lattice) simple polytopes.
\begin{Thm}\label[Thm]{thm:2faces}
    Let $P$ be a simple polytope whose 2-faces are triangles and quadrilaterals. 
    \begin{enumerate}
        \item\textup{({\cite{wiemeler15}*{Prop.\ 4.5}, \cite{Yu21}*{Appendix}})} \label[Thm]{thm:combprod} Without any further assumptions, $P$ is combinatorially isomorphic to a product of simplices.
        \item \label[Thm]{thm:combprodsimp} If each quadrilateral 2-face of $P$ is a trapezoid, then $P$ is affinely isomorphic to a Cayley tower (see \Cref{df:Cayley_tower}).
        \item \textup{(\cite{Teddy}*{Theorem 6.19})}
        \label[Thm]{thm:paraprod}
        If each quadrilateral 2-face of $P$ is a parallelogram, then $P$ is affinely isomorphic to a product of simplices.
        \item\textup{({\cite{coxeter}*{Lemma 2.7}})}\label[Thm]{thm:orthoprod} If each quadrilateral 2-face of $P$ is a rectangle, then $P$ is isometric to a product of simplices. 
    \end{enumerate}
\end{Thm}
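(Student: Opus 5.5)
The only new part of \Cref{thm:2faces} is \Cref{thm:combprodsimp}; parts (1), (3), (4) are cited. So we assume $P$ is simple, every 2-face is a triangle or a trapezoid, and we know from \Cref{thm:combprod} that $P$ is combinatorially $\Delta_{n_1}\times\cdots\times\Delta_{n_k}$. We induct on $k$; the case $k=1$ is a simplex, which is a trivial Cayley tower.

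For the inductive step, fix one factor, say $\Delta_{n_k}$. Its vertices $v_0,\dots,v_{n_k}$ give facets $F_j \cong \Delta_{n_1}\times\cdots\times\Delta_{n_{k-1}}$ of $P$, namely the product of the other factors with $\{v_j\}$. Each $F_j$ is again simple, and its 2-faces are 2-faces of $P$. By induction, each $F_j$ is therefore a Cayley tower. The combinatorial "parallel" edges between $F_i$ and $F_j$ form quadrilaterals $F_i(e)\cup F_j(e)$, one for each edge $e$ of $F_i$ with matching edge in $F_j$. By hypothesis each such quadrilateral is a trapezoid, and this is meant to force $F_j(e)$ to be parallel to $F_i(e)$.

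The main obstacle is this parallelism claim. In a trapezoid, only one pair of opposite sides is parallel, and that pair need not be the pair $\{F_i(e),F_j(e)\}$. It could instead be the two connecting edges. I would handle this by propagating a consistent choice. Around any 2-face that is a triangle, say the edges $e,e',e''$ of a triangle in $F_i$, the prism over the triangle is a 3-face. Its three quadrilaterals are trapezoids, and an elementary argument shows that in a triangular prism with trapezoidal sides, either the top and bottom triangles are parallel translates up to scaling (homothetic), or the connecting edges are concurrent or parallel. Both alternatives are compatible with the Cayley description. The real work is to choose the factor to split off (a "base" factor) so that the triangle faces $F_j$ are homothetic copies. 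Then, by connectivity of the edge graph of $F_i$ and the rigidity of a product of simplices (its edge directions determine it up to dilation once the directions match), each $F_j$ is a dilate/translate of $F_i$ with the same normal fan.

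Once all $F_j$ share a normal fan, $P$ is the convex hull of $n_k+1$ normally equivalent polytopes placed in affinely independent positions. The dimension count $\dim P=\dim F_0+n_k$ forces these positions to be affinely independent. That is exactly a Cayley polytope $\mathrm{Cay}(F_0,\dots,F_{n_k})$, and since $F_0$ is itself a Cayley tower, $P$ is a Cayley tower in the sense of \Cref{df:Cayley_tower}. If no factor can serve as the base, the alternative case (the connecting edges are parallel) instead exhibits $P$ as a Cayley polytope over the roles swapped, i.e.\ split along a different factor. Either way the induction closes.
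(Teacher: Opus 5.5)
There is a genuine gap, and it is exactly the step you call ``the real work'': you never show that some factor can serve as the base. That is, you never show there is an $i$ for which all faces $P_\xi$, $\xi$ a vertex of $\Delta_i$, are parallel. This is \Cref{prop:parallel_main}, the heart of the paper's proof, and your last sentence (``if no factor can serve as the base \dots\ split along a different factor'') simply presupposes it.

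For two factors, a prism dichotomy like yours (essentially rules (a) and (b) of \Cref{lm:rules}) does produce a base at a single vertex. With three or more factors, however, the trapezoid data could a priori be cyclic: type-$1$ edges translate type-$2$ edges to parallel edges, type $2$ does this to type $3$, and type $3$ to type $1$, but not conversely. Then no factor is a base. Triangular prisms cannot rule this out. Your analysis is also vacuous when all factors are segments, so it gives nothing even for combinatorial cubes, where the corresponding statement is already \cite{curtis_cubes}*{Theorem 3.7}.

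The missing idea has three parts. First, record, for each pair of adjacent edges of different types, which pair of opposite sides of their trapezoid is parallel, as a directed relation. Second, prove local rules for it on prisms and on $3$-dimensional combinatorial cubes; the cube rules (d) and (e) of \Cref{lm:rules} are what your analysis lacks. Third, show that the ``does not move'' relation has no directed cycles at any vertex (\Cref{lm:vertex_acyclic}): take a shortest cycle over all vertices and transport it along one of its edges to a shorter cycle at the neighbouring vertex.

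This yields a total order on the types at a vertex $p$ (\Cref{lm:type_acyclic}). Its extreme element, the type whose edges translate every other edge at $p$ to a parallel one, is a base. After that, no propagation along the edge graph is needed. The non-base edges at $p$, and their parallel translates at the other end $q$ of a base edge, span the affine hulls of $P_\xi$ and $P_\zeta$, so these faces are parallel. Each connecting quadrilateral then meets the two parallel hulls in positively parallel edges, which gives coherence via \Cref{fact:pos_parallel}.

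There are two smaller errors. First, Minkowski equivalent products of simplices need not be dilates or translates of one another. A square and a non-square rectangle already show this, and $\Cay([0,1]^2,[0,1]\times[0,2])$ satisfies all your hypotheses; only Minkowski equivalence is actually needed here. Second, applying the induction hypothesis separately to each $F_j$ gives unrelated affine maps, whereas one common map is required. Instead, apply it to $F_0$ alone and transfer to the other $F_j$ by \Cref{prop:MEQ2Cayley}.
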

\noindent
Part (b) of this theorem, which is our contribution, is a subtle interpolation between parts (a) and (c). 
As we shall see, Cayley towers are a very controlled way to build polytopes with the combinatorial type of a product of simplices, obtained by recursively taking Cayley sums of smaller Cayley towers.
We now switch gears and discuss \emph{lattice} polytopes.
In a smooth lattice polytope $P$, each quadrilateral 2-face is a trapezoid by \cite{fulton}*{Section 2.5, Exercise (a)}. A minor variation on the proof of \Cref{thm:2faces}\ref{thm:combprodsimp} 
gives:
\begin{Thm}\label[Thm]{thm:struct_thm_smooth}
    Let $P$ be a smooth lattice polytope combinatorially isomorphic to a product of simplices. Then $P$ is lattice isomorphic to an integral Cayley tower (see \Cref{df:Cayley_tower}).
\end{Thm}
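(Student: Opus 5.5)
We will follow the proof of \Cref{thm:2faces}\ref{thm:combprodsimp}, inducting on the number of simplex factors and keeping track of the lattice at every step. Write $P \cong \Delta_{n_1}\times\cdots\times\Delta_{n_k}$ combinatorially. Since $P$ is smooth, every quadrilateral 2-face is a trapezoid by \cite{fulton}*{Section 2.5, Exercise (a)}. Hence the real statement \Cref{thm:2faces}\ref{thm:combprodsimp} applies: $P$ is affinely isomorphic to a Cayley tower. The work is to upgrade ``affine'' to ``lattice'' and ``Cayley tower'' to ``integral Cayley tower''.

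\textbf{Step 1: choose a factor and a projection.} Choose one factor, say $\Delta_{n_k}$, with vertices $v_0,\dots,v_{n_k}$. The facets-of-type-$k$ structure gives faces $F_i \cong \Delta_{n_1}\times\cdots\times\Delta_{n_{k-1}}$, one for each $v_i$. The trapezoid condition on the 2-faces that mix the $k$-th factor with the others says the following: the edges of $P$ in the $\Delta_{n_k}$-direction at corresponding vertices of different $F_i$ are parallel in the appropriate sense. As in the real proof, this lets us show that all $F_i$ are normally equivalent, and that there is a linear functional map $\pi$ sending $P$ onto a simplex $\Delta$ with each $F_i$ mapping to a vertex. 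In other words, $P = \mathrm{Cayley}(F_0,\dots,F_{n_k})$ up to affine isomorphism.

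\textbf{Step 2: integrality of the decomposition.} Take a vertex $w$ of $F_0$. Smoothness at $w$ says the primitive edge directions form a lattice basis. These directions split into those of $F_0$ together with $n_k$ edges leaving $F_0$ toward the other $F_i$. We use this basis to define lattice coordinates $\mathbb{Z}^{\dim F_0}\times\mathbb{Z}^{n_k}$ in which $\pi$ is the coordinate projection and $\pi(P)$ is the standard unimodular simplex. The key claim is that the edge from $w$ toward $F_i$ has lattice length one in the $\pi$-direction. Equivalently, $F_i$ lies at height $e_i$, so $\pi$ is a lattice surjection onto a unimodular simplex. This follows because the edge endpoint lies in $F_i$, which sits in a lattice hyperplane parallel to $F_0$, and the primitive vectors form a basis. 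After this, each $F_i$ is identified with a lattice polytope in $\mathbb{Z}^{\dim F_0}\times\{e_i\}$, and $P$ is lattice isomorphic to the Cayley sum of the $F_i$.

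\textbf{Step 3: induction.} Each face $F_i$ of a smooth polytope is smooth, and it is combinatorially a product of fewer simplices. By induction, $F_0$ is lattice isomorphic to an integral Cayley tower. The other $F_i$ are normally equivalent to $F_0$, and they are smooth lattice polytopes with the same normal fan. So the same lattice isomorphism presents each of them as an integral Cayley tower with the same combinatorial shape, differing only in parameters. This assembles $P$ into an integral Cayley tower as in \Cref{df:Cayley_tower}.

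\textbf{Main obstacle.} We expect the main obstacle to be Step 2 together with the compatibility in Step 3. Step 2 requires showing that the heights of the $F_i$ are genuinely unimodular and not merely rational. Step 3 requires showing that one common lattice isomorphism puts all the $F_i$ simultaneously in integral Cayley tower form, which depends on exactly how \Cref{df:Cayley_tower} parametrizes towers. To handle this, we will first try to exploit the fact that smoothness of $P$ at every vertex, not just at vertices of $F_0$, forces each of the translated bases to be lattice bases, which pins down the integral parameters.
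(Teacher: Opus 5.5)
Your architecture matches the paper's: split off a factor whose faces are parallel, put a vertex in lattice coordinates via smoothness, recognize a Cayley sum, and induct. However, Step~2 asserts something false, and in doing so it misses the one genuinely lattice-theoretic step.

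The edge from $w$ to $F_i$ need not have lattice length one, and $\pi(P)$ need not be unimodular. The segment $[0,2]\subset\R$, or $2\Delta_2$, is a smooth combinatorial product of simplices all of whose edges have lattice length $2$. Lattice isomorphisms preserve lattice lengths of edges, so neither is lattice isomorphic to a $\Cay_1$ of points. This is exactly why \Cref{df:Cayley_tower} allows $\Cay_k$ for every positive integer $k$.

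Your justification also fails on its own terms. That the primitive edge directions at $w$ form a lattice basis says nothing about how far along them the neighbours of $w$ sit. In your coordinates you only get $F_i\subseteq\R^{d}\times\{\ell_ie_i\}$, with $\ell_i$ the lattice length of the edge from $w$ to $F_i$, so $\pi(P)=\Conv(0,\ell_1e_1,\dots,\ell_{n_k}e_{n_k})$. Then $P$ is a Cayley sum in the sense of the definition only if $\ell_1=\dots=\ell_{n_k}$. That equality is what has to be proved, and your plan never addresses it.

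The paper obtains it as follows. $\Conv(w,w+\ell_1e_1,\dots,w+\ell_{n_k}e_{n_k})$ is itself a face of $P$: it is the face obtained by fixing $w$'s vertex in each factor other than $\Delta_{n_k}$. Hence it is smooth, and \Cref{fact:smooth_simplex} forces all $\ell_i$ to equal a common positive integer $c$. This gives $P=\Cay_c(F_0,\dots,F_{n_k})$ up to lattice isomorphism. Your idea of using smoothness at other vertices to ``pin down the parameters'' aims at the wrong target (height one) and does not identify this simplex face.

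Two smaller points. First, in Step~1 the factor cannot be chosen arbitrarily. The trapezoid condition does not by itself make the $F_i$ parallel for a given factor. In the smooth trapezoid $\Conv((0,0),(2,0),(0,1),(1,1))$, the faces for one of the two factors are the non-parallel edges $[(0,0),(0,1)]$ and $[(2,0),(1,1)]$. Finding a good factor is precisely \Cref{prop:parallel_main}. Alternatively, read the factor off from the outermost Cayley sum supplied by \Cref{thm:2faces}\ref{thm:combprodsimp}, which you already invoked.

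Second, the compatibility you worry about in Step~3 is settled by \Cref{prop:MEQ2Cayley}. Apply a lattice isomorphism acting only in the $F_0$-directions that makes $F_0$ an integral Cayley tower. Each $F_i$ is Minkowski equivalent to $F_0$, so it is a Cayley tower up to translation; its internal parameters may differ from those of $F_0$, which the definition permits. It is then integral by \Cref{obs:integral=lattice}, because it is a lattice polytope.
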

\noindent
\Cref{thm:struct_thm_smooth} may instead be deduced from a theorem of Batyrev \cite{batyrev91}*{Corollary 4.4} on smooth complete toric varieties. A closely related result is proven in Dobrinskaya \cite{classification} and Choi--Masuda--Suh \cite{quasitoric2}, who instead work with quasitoric manifolds; see \Cref{sec:literature}. Both proofs use smoothness in an essential way, so do not extend to give a proof of 
\Cref{thm:2faces}\ref{thm:combprodsimp}.

We go on to use \Cref{thm:struct_thm_smooth} to characterize the \emph{lattice} weak Minkowski summands of smooth combinatorial products of simplices as being precisely the \emph{Nakajima} polytopes. These arose in \cite{Nakajima} in the study of affine toric varieties with mild singularities and were later studied in \cite{haase21}*{\textsection~2.2}. Very recently, Nakajima polytopes were used by Ferroni \cite{F26} to disprove a number of related conjectures in Ehrhart theory, most notably Stanley's conjecture on the unimodality of the $h^*$-vectors of IDP polytopes \cites{S86,SV13}. We recall the definition of these polytopes in \Cref{sec:WMS_of_SCPS}. 
\begin{Thm}\label[Thm]{thm:Nakajima}
    For a lattice polytope $P \subseteq \R^n$, the following are equivalent:
    \begin{enumerate}[(1)]
        \item $P$ is a weak Minkowski summand of a smooth combinatorial product of simplices.\label{it:WMS_SCPS}
        \item $P$ is a weak Minkowski summand of a smooth combinatorial cube.\label{it:WMS_SCC}
        \item $P$ is lattice isomorphic to a Nakajima polytope.\label{it:Nakajima}
    \end{enumerate}
\end{Thm}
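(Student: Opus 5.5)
We prove the cycle of implications \ref{it:WMS_SCC}$\Rightarrow$\ref{it:WMS_SCPS}$\Rightarrow$\ref{it:Nakajima}$\Rightarrow$\ref{it:WMS_SCC}. The first implication is immediate, since a combinatorial cube is a product of $1$-simplices. Throughout we use the recursive description of Nakajima polytopes: $Q\subseteq\R^n$ is Nakajima if $n=0$, or if, up to lattice isomorphism, $Q=\{(x,t)\in\R^{n-1}\times\R : x\in Q',\ 0\le t\le \ell(x)\}$ with $Q'$ Nakajima and $\ell$ an integral affine function nonnegative on $Q'$. We also use the standard description of weak Minkowski summands of a simple polytope $P=\{x:\langle a_i,x\rangle\le b_i\}$. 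These are exactly the polytopes $\{x:\langle a_i,x\rangle\le c_i\}$ for right-hand sides $c$ in the closure of the type cone of $P$. Equivalently, they are the polytopes whose normal fan is coarsened by that of $P$.

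For \ref{it:WMS_SCPS}$\Rightarrow$\ref{it:Nakajima}, let $P$ be a smooth combinatorial product of simplices and $Q$ a lattice weak Minkowski summand. By \Cref{thm:struct_thm_smooth}, $P$ is lattice isomorphic to an integral Cayley tower. We induct on the number of Cayley steps. Write $P=\mathrm{Cay}(P_0,\dots,P_k)$, where the $P_j$ are normally equivalent integral Cayley towers of lower height, and embed this concretely as $\{(x,y): y\in\Delta_k,\ x\in \sum y_jP_j\}$ in suitable lattice coordinates. The key observation is that the normal fan of $P$ is a ``twisted'' fan over the fan of $\Delta_k$. Hence every $Q$ coarsened by it has the form $\{(x,y): y\in \Delta', x\in \sum y_j Q_j\}$, where the $Q_j$ are weak Minkowski summands of the base tower and $\Delta'$ is a possibly degenerate dilate of $\Delta_k$.

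To obtain the Nakajima form, we peel off one coordinate at a time. Projecting $Q$ along the last Cayley direction $y_k$ should exhibit $Q$ as the region between $0$ and an integral affine function over a smaller polytope $Q'$. This uses that a simplex is itself Nakajima via $\Delta_k=\{0\le y_k\le 1-\sum_{j<k}y_j\}$. The region $Q'$ is again a weak Minkowski summand of a smaller smooth Cayley tower, obtained as a face or projection of $P$, and integrality of $\ell$ follows from $Q$ being a lattice polytope in a unimodular coordinate system. Induction on $n$ then finishes this direction.

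For \ref{it:Nakajima}$\Rightarrow$\ref{it:WMS_SCC}, given $Q=\{0\le t\le\ell(x),\ x\in Q'\}$, induction provides a smooth combinatorial cube $C'$ having $Q'$ as a weak summand. We then set $C=\{(x,t): x\in C',\ 0\le t\le \ell(x)+N\}$, where $\ell$ is extended to $C'$ and $N\gg0$ is an integer so that $\ell+N>0$ on $C'$. This $C$ is a combinatorial prism over $C'$, hence a combinatorial cube. It is smooth because its vertical edges have direction $e_t$, its other edges are lifts of edges of $C'$ with integral slope, and the unimodularity of the edge directions at each vertex is inherited from $C'$. Finally, $Q$ is a weak summand of $C$. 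The normal fan of $C$ is obtained from that of $C'$ by adding the rays $\pm$ of the normals $(0,-1)$ and $(-\nabla\ell,1)$ and subdividing. The fan of $Q$ is obtained in exactly the same way from the fan of $Q'$, which $C'$'s fan refines, so the fan of $C$ refines that of $Q$.

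The main obstacle is the fan-theoretic step in \ref{it:WMS_SCPS}$\Rightarrow$\ref{it:Nakajima}. We must show that every coarsening arising as the fan of a lattice weak summand of a Cayley tower retains the iterated ``bounded by an affine function'' structure, especially when $Q$ degenerates and some $Q_j$ or $\Delta'$ collapse. We expect to handle this by working directly with the type cone inequalities, namely the wall-crossing conditions on the $c_i$ along each $2$-face. In a Cayley tower these conditions reduce to nonnegativity of finitely many integral affine functions.
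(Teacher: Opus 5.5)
The implications (2)$\Rightarrow$(1) and (3)$\Rightarrow$(2) are fine and essentially match the paper. You build a strict integral chimney with $N\gg 0$, get smoothness as in \Cref{lm:strict=smooth}, and get the weak summand property as in \Cref{lm:chimney_summand}. One small point: when $\ell$ vanishes somewhere on $Q'$, the fan of $Q$ is not ``obtained in exactly the same way''. The formula $h(s,w)=sb+h_{Q'}(w+su)$ for $s\ge 0$ and $h(s,w)=sc+h_{Q'}(w+sv)$ for $s\le 0$ holds for any chimney $\Ch{\langle u,-\rangle+b}{\langle v,-\rangle+c}{Q'}$ and settles it.

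The genuine gap is in (1)$\Rightarrow$(3), and it is not only the step you flagged: the mechanism you propose is false. Projecting along a Cayley coordinate $y_k$ does not exhibit a twisted Cayley sum as a chimney. Take the smooth combinatorial square $P=\Cay([0,1],[0,2])=\Conv\{(0,0),(1,0),(0,1),(2,1)\}$, whose Cayley coordinate is $y$, and take $Q=P$.
\begin{itemize}
\item Projecting along $y$ gives $[0,2]$, and the fiber over $x$ is $[\max(0,x-1),1]$. Its lower endpoint is not affine in $x$.
\item The only pair of parallel edges is horizontal. So the only chimney presentation is $P=\{(x,y): 0\le y\le 1,\ 0\le x\le 1+y\}$, i.e.\ in the \emph{fiber} direction, over the Cayley simplex.
\item The presentation $\Delta_k=\{0\le y_k\le 1-\sum_{j<k}y_j\}$ only helps when all $P_j$ coincide.
\end{itemize}
Separately, ``integrality of $\ell$ follows from $Q$ being a lattice polytope'' is not true as stated. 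The set $\{(x,t): 0\le x\le 2,\ 0\le t\le x/2\}$ is a lattice chimney with non-integral $\ell$. Integrality has to be inherited from the smooth $P$.

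The missing idea is that chimneys commute with Cayley sums: keep the chimney direction in the innermost coordinates and absorb the Cayley simplex into the base. Concretely, $\Cay_k(\Ch{f}{g}{P},\Ch{f+b}{g+c}{Q})=\Ch{\tilde f}{\tilde g}{\Cay_k(P,Q)}$ with $\tilde f(x,t)=f(x)+\tfrac{b}{k}t$ and $\tilde g(x,t)=g(x)+\tfrac{c}{k}t$ (\Cref{lm:CCCC}). This applies because polytopes Minkowski equivalent to, or weak summands of, a strict chimney $\Ch{f}{g}{P_0}$ are chimneys $\Ch{f+b}{g+c}{Q_0}$ with the \emph{same} linear parts (\Cref{lm:MEQchimney}, \Cref{lm:chimney_summand}). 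Hence every Cayley tower is an iterated chimney (\Cref{cor:tower=iterated}).

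The paper then separates the two issues you tried to handle at once.
\begin{itemize}
\item First, the smooth $P$ itself is lattice isomorphic to a Nakajima polytope, using \Cref{lm:smooth_chimney} to make the functionals integral.
\item Then, lattice weak summands of a Nakajima polytope are Nakajima by \Cref{cor:integral_chimney_summand}. Their functionals have the linear parts of $P$'s, and the constants are integers because their vertices are lattice points.
\end{itemize}
This avoids any analysis of degenerate $Q_j$ or $\Delta'$ inside a Cayley presentation, which is exactly the part your proposal leaves open.
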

\noindent
As an immediate corollary of \Cref{thm:Nakajima}, using a result of Haase et al.\ for Nakajima polytopes \cite{haase21}*{Corollary 2.9}, we obtain the following.
\begin{Cor}\label[Cor]{cor:unimodulartriangulation}
    Let $P$ be a smooth combinatorial product of simplices, or a lattice weak Minkowski summand  thereof.
    Then $P$ admits a quadratic triangulation.
\end{Cor}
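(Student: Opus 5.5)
The corollary should follow by combining \Cref{thm:Nakajima} with \cite{haase21}*{Corollary 2.9}, which says that every Nakajima polytope has a regular, flag (quadratic) unimodular triangulation. The proof then reduces to checking that every polytope covered by the statement falls under condition \ref{it:WMS_SCPS} of \Cref{thm:Nakajima}, and that quadratic triangulations are preserved under lattice isomorphism.

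First, take $P$ to be a smooth combinatorial product of simplices. Then $P$ is trivially a weak Minkowski summand of itself, since $1\cdot P$ is a Minkowski summand of $P$, i.e.\ $P + 0 = P$ with the zero polytope a point. So $P$ satisfies \ref{it:WMS_SCPS}. Next, take $P$ to be a lattice weak Minkowski summand of such a polytope. Then $P$ satisfies \ref{it:WMS_SCPS} directly. In both cases \Cref{thm:Nakajima} shows that $P$ is lattice isomorphic to a Nakajima polytope $Q$. Here I should check that the notion of ``weak Minkowski summand'' used in the corollary matches the one in \Cref{thm:Nakajima}, i.e.\ that the normal fan of the summand coarsens that of the smooth polytope.

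Second, let $\phi$ be a lattice isomorphism with $\phi(P)=Q$, meaning an affine map restricting to a bijection of the lattices. By \cite{haase21}*{Corollary 2.9}, $Q$ has a quadratic triangulation $\mathcal T$. Pulling back, $\phi^{-1}(\mathcal T)$ is a triangulation of $P$ with the following properties.
\begin{enumerate}
\item Its vertices are lattice points, because $\phi^{-1}$ preserves the lattice.
\item It is unimodular, because normalized volume is invariant under lattice isomorphisms.
\item It is flag, because flagness is a property of the abstract simplicial complex, which is unchanged.
\item It is regular, because composing a lifting function with $\phi$ gives a valid height function.
\end{enumerate}
Hence $P$ admits a quadratic triangulation.

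The substantive content lies entirely in \Cref{thm:Nakajima}. Given that result, the only point needing care is the bookkeeping of definitions: confirming that ``quadratic triangulation'' in \cite{haase21} means regular, unimodular and flag in the same sense as here, and that the Nakajima polytopes there are the same as those recalled in \Cref{sec:WMS_of_SCPS}, including any normalization such as being full-dimensional in a specified lattice. If $P$ is not full-dimensional in $\R^n$, I would first restrict to its affine lattice span before applying these identifications.
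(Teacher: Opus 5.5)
Your proposal is correct and follows the paper's proof: apply \Cref{thm:Nakajima} to see that $P$ is, up to lattice isomorphism, a Nakajima polytope, then invoke \cite{haase21}*{Corollary 2.9}. Your extra checks are exactly the routine steps the paper leaves implicit, namely that $P$ is a weak Minkowski summand of itself and that quadratic triangulations transport along lattice isomorphisms.
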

\noindent
With a little more work, we can apply the standard maneuver known as the \textit{Cayley trick} to prove the following theorem.
Recall that a pair of lattice polytopes $P, Q$ is \emph{relatively IDP} if 
\[
    P \cap \Z^n + Q \cap \Z^n = (P + Q) \cap \Z^n.
\]
A lattice polytope $P$ is IDP if 
the pair $(P, kP)$ is relatively IDP for all natural numbers $k$. 
The following result has appeared in a different guise as \cite{ikeda09}*{Corollary 4.2}. See \Cref{sec:literature} for details of the translation, which relies crucially on \cite{batyrev91}*{Corollary 4.4}.
\begin{Thm}\label[Thm]{thm:absolute_ODA}
    Let $P$ be a smooth lattice polytope combinatorially isomorphic to a product of simplices.
    Let $Q, Q'$ be lattice polytopes which are weak Minkowski summands of $P$.
    Then the pair $(Q, Q')$ is relatively IDP. In particular, every lattice weak Minkowski summand of $P$ is IDP.
\end{Thm}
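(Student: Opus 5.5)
We will prove \Cref{thm:absolute_ODA} by the Cayley trick. Given lattice weak Minkowski summands $Q, Q'$ of $P$, form the Cayley sum
\[
    C = \operatorname{Cay}(Q, Q') = \operatorname{conv}\bigl( (Q \times \{0\}) \cup (Q' \times \{1\}) \bigr) \subseteq \R^n \times \R .
\]
The standard observation is as follows. If $C$ admits a unimodular triangulation whose simplices each have vertices in both layers, or more generally if $C$ is IDP, then $(Q, Q')$ is relatively IDP. To see this, take a lattice point $z \in (Q+Q') \cap \Z^n$. Then $(z/2, 1/2) \in C$, so $(z,1) \in 2C \cap \Z^{n+1}$. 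By IDP this point is a sum $(q,0) + (q',1)$ of lattice points of $C$, which forces $q \in Q \cap \Z^n$ and $q' \in Q' \cap \Z^n$. So it suffices to show that $C$ is IDP. Since a polytope with a unimodular triangulation is IDP, it suffices to show that $C$ has a unimodular triangulation, and \Cref{cor:unimodulartriangulation} gives one. To apply it, we show that $C$ is itself a lattice weak Minkowski summand of a smooth combinatorial product of simplices. By \Cref{thm:Nakajima} it is equivalent to show that $C$ is lattice isomorphic to a Nakajima polytope.

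The key steps run as follows. First, use \Cref{thm:Nakajima} to write $Q$ and $Q'$ as weak Minkowski summands of $P$, and recall that weak Minkowski summands of $P$ are exactly the polytopes $\{x : Ax \le b'\}$ where $A$ is the facet matrix of $P$ and $b'$ lies in the closed deformation cone of $P$. Second, consider the smooth polytope $\tilde P = \operatorname{Cay}(P, P) \cong P \times \Delta_1$. This is smooth, combinatorially a product of simplices, and its deformation cone contains $\operatorname{Cay}(Q, Q')$. The reason is that a Cayley sum of two deformations of $P$ is cut out by the inequalities $Ax \le (1-t)b + t b'$ together with $0 \le t \le 1$, and this right-hand side is affine in $t$, so the result is a deformation of $P \times \Delta_1$. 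Hence $C$ is a lattice weak Minkowski summand of $\tilde P$. By \Cref{thm:Nakajima} it is then lattice isomorphic to a Nakajima polytope, and \Cref{cor:unimodulartriangulation} applies. An alternative route is to observe directly that the Nakajima description is closed under this Cayley operation: if $Q$ and $Q'$ are Nakajima polytopes built from the same sequence of affine functionals, then so is their Cayley sum, with the new coordinate $t$ placed first.

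For the "in particular" clause, take $Q' = kQ$, which is again a lattice weak Minkowski summand of $P$. The pair $(Q, kQ)$ is then relatively IDP for every $k$, which is exactly the definition of IDP.

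The main obstacle is the second step: verifying carefully that $\operatorname{Cay}(Q, Q')$ really is a weak Minkowski summand of $P \times \Delta_1$, for example by checking that its normal fan coarsens that of $P \times \Delta_1$. The subtlety is that when $Q$ and $Q'$ are degenerate, the Cayley sum has extra faces. I would try first to argue via support functions: $h_C(u,s) = \max(h_Q(u), h_{Q'}(u) + s)$. This is piecewise linear on a refinement of the fan of $P \times \Delta_1$, because $h_Q$ and $h_{Q'}$ are linear on the cones of the fan of $P$. One must then check convexity-compatibility, which may require instead passing to $P \times \Delta_1$ with $P$ replaced by a suitable dilate so that both $Q$ and $Q'$ sit inside.
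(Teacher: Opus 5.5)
Your Cayley-trick reduction is correct: the slices of $C=\Cay(Q,Q')$ at heights $0$ and $1$ are exactly $Q$ and $Q'$. So is the deduction of the ``in particular'' clause from $Q'=kQ$. The overall plan of exhibiting $C$ as a lattice weak Minkowski summand of some smooth combinatorial product of simplices is also viable.

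\textbf{The gap.} Your choice $\tilde P=P\times\Delta_1$ fails: $C$ is a weak Minkowski summand of $P\times\Delta_1$ only when $Q=Q'$. Take $P$ full-dimensional. The maximal cones of $\Sigma_{P\times\Delta_1}$ are $\sigma\times\R_{\ge 0}$ and $\sigma\times\R_{\le 0}$ for maximal $\sigma\in\Sigma_P$. On such a cone, $h_C(u,s)=\max(h_Q(u),h_{Q'}(u)+s)$ is a maximum of two linear functions, so it is linear only if one of them dominates on the whole cone. Letting $s\to+\infty$ (resp.\ $s\to-\infty$) rules out one branch and forces $h_{Q'}\ge h_Q$ (resp.\ $h_Q\ge h_{Q'}$) on $\sigma$. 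Hence $Q=Q'$.

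A concrete counterexample: take $P=Q'=[0,1]$ and $Q=\{0\}$. Then $C$ is the triangle with vertices $(0,0),(0,1),(1,1)$. Its diagonal edge cannot occur in a weak Minkowski summand of the square.

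The error in your deformation-cone argument is this. The inequalities $Ax\le(1-t)b+tb'$ read $Ax+(b-b')t\le b$, whose facet normals $(a_i,b_i-b_i')$ are tilted and are not normals of $P\times\Delta_1$. Dilating $P$ does not change the fan, so it does not help. Also, linearity of $h_C$ on a \emph{refinement} of $\Sigma_{P\times\Delta_1}$ is the wrong direction: you need $\Sigma_C$ to coarsen $\Sigma_{P\times\Delta_1}$.

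\textbf{A repair.} Take instead $\tilde P=\Cay(P+Q,P+Q')=C+(P\times\{0\})$.
\begin{itemize}
\item By \Cref{prop:nef}, $P+Q$ and $P+Q'$ are Minkowski equivalent to $P$. So $\tilde P$ is a coherent integral Cayley sum, combinatorially $P\times\Delta_1$.
\item $\tilde P$ is smooth. After reducing to $P$ full-dimensional, the primitive edge directions at a vertex $(v,0)$ are those of $P+Q$ at $v$ together with $(v'-v,1)$, where $v'$ is the corresponding vertex of $P+Q'$.
\item $C$ is automatically a weak Minkowski summand of $C+(P\times\{0\})$.
\end{itemize}
Then \Cref{thm:Nakajima} and \Cref{cor:unimodulartriangulation} apply to $C$ directly.

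\textbf{Comparison with the paper.} The repaired argument would be a genuinely different route: it reuses \Cref{thm:Nakajima} as a black box, at the cost of a smoothness check for coherent Cayley sums. The paper instead follows your one-line ``alternative'' (\Cref{lm:cayley_nakajima}):
\begin{itemize}
\item Write the Nakajima model of $P$ as a strict chimney $\Ch{f}{g}{P_0}$.
\item Use \Cref{cor:integral_chimney_summand} to present $Q$ and $Q'$ as $\Ch{f+b_i}{g+c_i}{R_i}$, with the \emph{same} linear parts, \emph{integer} constants $b_i,c_i$, and lattice summands $R_i$ of $P_0$.
\item Conclude via \Cref{lm:CCCC} and induction that $\Cay(Q,Q')$ is Nakajima.
\end{itemize}
As you stated it, that alternative assumes this compatible presentation rather than deriving it, and deriving it is the actual content.
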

\noindent
The importance of \Cref{thm:absolute_ODA} lies in its relationship to the many versions of what is known as Oda's Conjecture, posed by Tadao Oda in 1997 and documented in \cite{oda08}. We recast them here in the language of polytopes (and defer further explanation to \cref{sec:literature}).

\begin{Conj}[Oda's Conjecture] \label[Conj]{conj:oda} Let $P$ be a smooth lattice polytope, let $Q$ be a lattice polytope Minkowski equivalent to $P$, and let $R$ be a lattice weak Minkowski summand of $P$.
\begin{enumerate} 
    \item \label{it:oda_weak} $P$ is IDP. 
    \item \label{it:oda_med} $(P, Q)$ is relatively IDP. 
    \item \label{it:oda_strong}$(P, R)$ is relatively IDP. 
\end{enumerate}
\end{Conj}
\noindent
Note that the (universalized) statements (a), (b), (c) are of increasing strength.
To this day, the weakest of these remains open even in dimension 3.
In dimension 2, \cref{conj:oda} \ref{it:oda_strong} was answered affirmatively in \cite{fakhruddin02}. It was further shown in \cite{Haase08} that the assumption of smoothness could be dropped in this case. 
However, it is well-known that, starting in dimension 3, this assumption cannot be dropped even in \cref{conj:oda}\ref{it:oda_weak}. 
Recent work in this area includes \cites{gubeladze2012convex, firla1999hilbert, Lundman_2012, Bogart_2015, haase2010generating}. We highlight that in \cite{beck19} it was shown that \cref{conj:oda}\ref{it:oda_weak} holds for 3-dimensional, centrally symmetric polytopes. 

In the special case of smooth combinatorial cubes a new \emph{polytopal} proof of \Cref{conj:oda}\ref{it:oda_weak} was given in \cite{curtis_cubes}. It is largely the methods of that paper that have inspired the present one.
\Cref{thm:absolute_ODA} proves \Cref{conj:oda}\ref{it:oda_strong} for smooth combinatorial products of simplices. In fact, it proves something stronger since \emph{both} $Q$ and $Q'$ are allowed to be weak Minkowski summands. The analogue of \Cref{thm:absolute_ODA} for arbitrary smooth polytopes fails even in dimension 2.
We note that a third proof of \Cref{thm:absolute_ODA} is available which bypasses \Cref{cor:unimodulartriangulation}
and sticks more closely to the ideas in \cite{curtis_cubes}, but which the present authors have omitted out of considerations of space.

\subsection*{Structure of the paper} 
In \Cref{sec:notation} we recall some basic facts about polytopes and define Cayley sums and Cayley towers.
In \Cref{sec:structure_thm}, we prove \Cref{thm:2faces}\ref{thm:combprodsimp}
and the very similar \Cref{thm:struct_thm_smooth}. 
In \Cref{sec:WMS_of_SCPS}, we introduce the all-important chimney construction, a minor variation of the one in \cite{haase21}*{\textsection~2.2.1}, and use it to prove  \Cref{thm:Nakajima}. 
\Cref{cor:unimodulartriangulation} is then immediate. We conclude the section by proving \Cref{thm:absolute_ODA}. 
In \Cref{sec:literature}, we interpret our results in the language of toric varieties and discuss their relationship to the existing literature.

\subsection*{AI disclosure} Anthropic's Claude (Fable 5 and 5.1) was used for proofreading. The writing and ideas are due entirely to the authors. 

\section{Preliminaries} \label{sec:notation}
    \subsection{Polytopes} 

    We begin by recalling some basic definitions and properties. Most of the basics we discuss here and much more may be found in \cite{ziegler95}.
    We write $e_1, \dots, e_n$ for the standard basis of $\mathbb{R}^n$ and $x_1, \dots, x_{n}$ for its dual basis (of linear functionals).
    A \define{polytope} $P$ in $\R^n$ is any set of the form $\Conv(S)$ where $S \subset \R^n$ is a finite set and $\Conv$ is the convex hull operator. 
    We say that $P$ is a \define{lattice polytope}
    if $S$ may be chosen to lie in the lattice $\Z^n \subset \R^n$. 
    The \define{dimension} of a polytope $P$, denoted $\dim P$, is the dimension of its affine hull $\Aff P$ (the smallest affine subspace containing it).
    If this affine subspace is all of $\R^n$,
    then $P$ is \define{full-dimensional}.

    A \define{supporting hyperplane} for a polytope $P$ is an affine hyperplane which intersects $P$ and such that $P$ lies entirely in one of the two closed halfspaces it delimits.
    A \define{face} of $P$ is the intersection of $P$ with a supporting hyperplane. We also admit $P$ itself as a face, and each face of $P$ is again a polytope.
    A face of $P$ not equal to $P$ is a \define{proper} face. 
    A maximal proper face is a \define{facet}.
    The $0$-dimensional faces of $P$ are its \define{vertices}.
    The $1$-dimensional faces are its \define{edges}.
    If $P$ has dimension $d$, each of its vertices lies in at least $d$ edges. Polytopes which achieve this minimum at every vertex are \define{simple}. 

    Recall that an integer vector is \define{primitive} if its entries are relatively prime.
    If $P$ is a lattice polytope in $\R^n$ and $p, q$ are adjacent vertices in its $1$-skeleton, the \define{primitive edge direction} from $p$ to $q$ is the unique primitive integer vector $v \in \Z^n$ such that 
    $q - p = mv$ for some positive integer $m$.
    The primitive edge directions at $p$ are the primitive edge directions from $p$ to $q$ as $q$ ranges over the neighbors of $p$ in the 1-skeleton of $P$. 
    We say that $P$ is \define{smooth} precisely when these integer vectors form a basis of a saturated sublattice of $\Z^n$, no matter the choice of $p$. 
    The nomenclature is inherited from toric geometry and reflects the fact that a lattice polytope is smooth if and only if the projective variety it defines is smooth. Any smooth polytope is necessarily simple.

There are various notions of isomorphism of polytopes which are relevant in different contexts. The ones we employ in this paper are the ones we now enumerate.
Let $P$ and $Q$ be polytopes in $\R^n$. We say that $P$ and $Q$ are \define{combinatorially isomorphic} when they have isomorphic face posets; an isomorphism between their face posets is a \define{combinatorial isomorphism}.
$P$ and $Q$ are \define{affinely isomorphic} if $Q$ is the image of $P$ under an affine isomorphism: an invertible affine-linear transformation $\R^n \to \R^n$.
They are \define{isometric} if there is a Euclidean motion (i.e.\ a distance-preserving affine transformation) taking one bijectively onto the other.
If $P$ and $Q$ are lattice polytopes, then we say they are \define{lattice isomorphic} if there is a lattice-preserving affine isomorphism taking one bijectively onto the other. 
Here, an affine isomorphism 
$f: \R^n \to \R^n$ is \define{lattice-preserving} if $f(\Z^n) = \Z^n$.
By identifying $\R^{n}$ in the usual way as a subspace of $\R^{m}$ for $n < m$
(and the lattice $\Z^n$ with a sublattice of $\Z^m$), we extend these notions to polytopes living in different ambient dimensions. Evidently, isometry and lattice isomorphism of polytopes each imply affine isomorphism, which in turn implies combinatorial isomorphism.

Given polytopes $P$ in $\mathbb{R}^n$ and $Q$ in $\mathbb{R}^m$, the product set $P \times Q \subseteq\mathbb{R}^{n+m}$ is again a polytope.
The faces of $P \times Q$ are the polytopes $F \times G$ where $F, G$ range over the faces of $P, Q$ respectively.
From this one deduces that the combinatorial (resp. affine, lattice, isometry) isomorphism class of $P \times Q$ depends only on the  combinatorial (resp.\ affine, lattice, \ isometry) isomorphism classes of $P$ and $Q$.
As an example which will be relevant for us, a \define{combinatorial cube} is any polytope combinatorially isomorphic to the \define{standard cube} $[0,1]^n$ for some $n$.
The \define{standard $n$-simplex} is the full-dimensional lattice polytope in $\R^n$ whose vertices are the origin and the $n$ standard basis vectors; any polytope combinatorially isomorphic to it is an \define{$n$-simplex}.
A \define{combinatorial product of simplices} is any polytope combinatorially isomorphic to some product of simplices. Since a line segment is a 1-simplex, every combinatorial cube is a combinatorial product of simplices.

\subsection{Normal fans}
Much of the background we discuss in this section may be found in \cite{oda88}*{Appendix}.
The \define{positive} or \define{conical hull} of a subset $S \subseteq \R^n$ is the set of all linear combinations with nonnegative coefficients of points in $S$. 
A \define{polyhedral cone} in $\R^n$ is the positive hull of a finite set of points.
We shall henceforth use \define{cone} as an abbreviation for \emph{polyhedral} cone.
The \define{dimension} of a cone is the dimension of its linear span. A cone in $\R^n$ of dimension $n$ is \define{full-dimensional}.
A cone is \define{rational} if it is the positive hull of a finite subset of the lattice $\Z^n$.

    A \define{supporting hyperplane} for a cone $C$ is a linear hyperplane such that $C$ lies entirely in one of the two closed halfspaces it delimits.
    A \define{face} of a cone $C$ is the intersection of $C$ with a supporting hyperplane. We also admit $C$ itself as a face. Each face of $C$ is again a cone. 
    The unique minimal face of $C$ is a linear subspace of $\R^n$ called its \define{lineality space} and denoted $\lin(C)$.
    A cone $C$ is \define{strictly convex} if $\lin(C) = \{0\}$. 
    
    A \define{ray} of a strictly convex cone $C$ is a 1-dimensional face.
    Every strictly convex cone is the convex hull of its rays and has at least as many rays as its dimension.
    A cone is \define{simplicial} if it achieves this minimum.
    If $C$ is strictly convex and rational, each of its rays is the positive hull of some integer vector $v$. The unique primitive such $v$ is called the \define{primitive ray generator}. 
    A rational, strictly convex cone is \define{smooth} if its primitive ray generators form a basis of a saturated sublattice of $\Z^n$. Any such cone is necessarily simplicial. 
    %More generally, we say that a rational polyhedral cone $C$ is smooth if its image in $\R^n/\lin(C)$ is smooth (after making the identifications explained above).

    A \define{fan} in $\R^n$ is a finite collection $\Sigma$ of polyhedral cones such that (i) whenever it contains a cone, it also contains all of its faces and (ii) if it contains any two cones $C$ and $D$, then the cone $C \cap D$ is a face of both $C$ and $D$. 
    A fan is \define{complete} if the setwise union of its cones is all of $\R^n$.
    A fan is \define{rational} (resp.\ \define{strictly convex}, \define{simplicial}, \define{smooth}) if each of its cones is rational (resp.\ {strictly convex}, {simplicial}, {smooth}). 
    Just as with polytopes, one defines \define{combinatorial}, \define{affine} and 
    \define{lattice isomorphism} of fans. In the last case, the fan is assumed to be rational.
    
    The \define{support function} of a polytope $P \subset \R^n$ is the function $h_P: \R^n \to \R$, given by
    \[
        h_P : v \mapsto \sup_{x \in P} \inner{v, x}.
    \]
    We can recover $P$ from $h_P$ via
    \[
        P = \bigcap_{v \in \R^n} \{x \in \R^n \mid \inner{v, x} \leq h_P(v)\}.
    \]
    These observations underlie the following basic fact:
    \begin{Fact}[cf.\ {\cite{oda88}*{Theorem A.18(a), Corollary A.19}}]\label{fact:support_criterion}
        The function $h_P$ is positively homogeneous,  convex, and piecewise linear on $\R^n$. Conversely, any function on $\R^n$ satisfying these properties has the form 
        $h_P$ for some polytope $P$ in $\R^n$.
    \end{Fact}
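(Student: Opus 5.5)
We prove the two directions separately: first that $h_P$ has the three properties, then that any function with them is $h_Q$ for some polytope $Q$.

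\emph{Forward direction.} Write $P = \Conv(S)$ with $S$ finite. A linear functional attains its supremum over a polytope at a vertex, so
\[
    h_P(v) = \max_{s \in S} \inner{v, s}.
\]
Each piece is linear, and this formula gives all three properties at once.
\begin{itemize}
\item Positive homogeneity: $h_P(\lambda v) = \lambda h_P(v)$ for $\lambda \ge 0$.
\item Convexity: $h_P$ is a pointwise maximum of linear functions.
\item Piecewise linearity: the sets $C_s = \{v \mid \inner{v,s} \ge \inner{v,s'} \text{ for all } s' \in S\}$ are finitely many polyhedral cones. They cover $\R^n$, and $h_P$ is linear on each.
\end{itemize}

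\emph{Converse.} Let $h$ be positively homogeneous, convex and piecewise linear. Piecewise linearity means there are finitely many polyhedral cones $C_1, \dots, C_k$ covering $\R^n$ and vectors $m_1, \dots, m_k$ with $h(v) = \inner{v, m_i}$ on $C_i$. We may assume each $C_i$ is full-dimensional: the full-dimensional pieces already cover $\R^n$ up to a closed set with empty interior, and $h$ is continuous. Set $Q = \Conv\{m_1, \dots, m_k\}$. It suffices to prove
\[
    h(v) = \max_i \inner{v, m_i} \quad \text{for all } v,
\]
since the right-hand side is $h_Q(v)$.

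The inequality $h(v) \le \max_i \inner{v, m_i}$ holds trivially, because $v$ lies in some $C_i$.

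For the reverse inequality, fix $i$ and $v$, and choose $w$ in the interior of $C_i$. Convexity along the segment from $w$ to $v$, together with $h(w + t(v-w)) = \inner{w + t(v-w), m_i}$ for small $t > 0$, gives
\[
    h(v) \ge h(w) + \inner{v-w, m_i} = \inner{v, m_i}.
\]
In other words, a convex function lies above each of its linear pieces, viewed as tangent planes; homogeneity is what removes the constant term.

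The main point requiring care is this last step: using convexity to compare $h$ with each linear piece extended beyond its own cone, which needs the pieces to be full-dimensional. Finally, the recovery formula for $P$ from $h_P$ follows from the separating hyperplane theorem applied to a point outside $P$.
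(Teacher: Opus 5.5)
Your proof is correct, but it does not follow the paper, which gives no argument of its own. The paper cites Oda's appendix. There the statement comes from the general correspondence between compact convex sets and positively homogeneous convex functions (Theorem A.18), in which the set is recovered as $\{x : \langle v,x\rangle \le h(v) \text{ for all } v\}$. That is followed by the identification of piecewise linear support functions with polytopes (Corollary A.19).

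You bypass this theory by writing the polytope down explicitly as $\Conv\{m_1,\dots,m_k\}$, the gradients on the full-dimensional domains of linearity. The only convexity input you then need is the one-variable fact that a convex function lies above its one-sided tangent line. Your route is more elementary and self-contained. The cited route gives uniqueness and extends to arbitrary compact convex sets, but the paper needs neither here.

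Two small remarks:
\begin{enumerate}
\item Your reduction to full-dimensional pieces is essential, since a lower-dimensional piece can carry a spurious vector. For example, take $h(v)=|v|$ on $\mathbb{R}$ with the piece $\{0\}$ and $m=100$. The reduction needs no continuity, though. The union of the full-dimensional cones is closed, and its complement is an open set contained in a finite union of lower-dimensional cones, hence empty.
\item In your formulation, positive homogeneity is already built into ``linear on cones,'' so your aside that it ``removes the constant term'' is not where it is used. It does real work only if piecewise linearity is defined via affine functions on polyhedra. In that setting it lets you replace each polyhedron by its recession cone, on which $h$ is then linear.
\end{enumerate}
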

    \noindent
    Since $h_P$ is positively homogeneous and piecewise linear, its domains of linearity are full-dimensional cones in $\R^n$.  They are the maximal cones of a complete fan in $\R^n$, called the \emph{normal fan} of $P$, and denoted $\Sigma_P$.
    The cones of $\Sigma_P$ are in inclusion-reversing bijection with the faces of $P$.  
    Concretely, to a face $F$ of $P$, we associate the cone 
    \[
        \sigma^P_F \deq \left\{v \in \R^n \;\middle|\; 
        \text{$\inner{v, x} \leq \inner{v, y}$ whenever $x \in P$ and $y \in F$}\right\},
    \]
    called the \define{normal cone} of $F$.
    The domains of linearity of $h_P$ are the cones $\sigma_p^P$ as $p$ ranges over the vertices of $P$.
    These are the {maximal cones} of $\Sigma_P$. If $P$ is full-dimensional, the fan $\Sigma_P$ is strictly convex and the function $h_P$ is determined by its values on the rays of $\Sigma_P$, which are in bijection with the facets of $P$ under the correspondence described above.
    If $P$ is a lattice polytope, the fan $\Sigma_P$ is rational.
    If $P$ is full-dimensional then $\Sigma_P$ is strictly convex. If $P$ is smooth, then $\Sigma_P$ is smooth.

    Two polytopes in $\R^n$ with identical normal fans are called \define{Minkowski equivalent}.
    For instance, two translates of the same polytope are always Minkowski equivalent, as are two positive rescalings of one.
    For a simplex, \emph{all} Minkowski equivalent polytopes are obtained by combining these two operations. In particular, Minkowski equivalent simplices are affinely isomorphic. The same is not true for a general polytope.
    \begin{Fact}\label[Fact]{fact:smooth_simplex}
        Let $c_1, \dots, c_n$ be positive integers. The simplex $\Delta = \Conv(0, c_1 e_1, \dots, c_n e_n)$
        in $\mathbb{R}^n$ is smooth if and only if $c_1 = \dots = c_n$.
    \end{Fact}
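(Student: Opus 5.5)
The simplex $\Delta = \Conv(0, c_1 e_1, \dots, c_n e_n)$ is full-dimensional and simple: every one of its vertices lies in exactly $n$ edges. By the definition of smoothness, it therefore suffices to compute the primitive edge directions at each vertex. We then check exactly when these form a basis of $\Z^n$; saturation is automatic here, since there are $n$ of them in $\Z^n$.

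First, the vertex $0$. Its edges go to $c_i e_i$, so the primitive directions are $e_1, \dots, e_n$. These always form a basis, so $0$ imposes no condition.

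Next, the vertex $c_i e_i$. Its edges go to $0$ and to $c_j e_j$ for each $j \ne i$.
\begin{itemize}
\item The edge to $0$ has primitive direction $-e_i$.
\item The edge to $c_j e_j$ has direction $c_j e_j - c_i e_i = g_{ij}\,(a_j e_j - a_i e_i)$, where $g_{ij} = \gcd(c_i, c_j)$, $a_j = c_j/g_{ij}$ and $a_i = c_i/g_{ij}$.
\end{itemize}
Since $\gcd(a_i, a_j) = 1$, the primitive direction is $a_j e_j - a_i e_i$.

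Now compute the determinant of these $n$ vectors. Adding multiples of $-e_i$ to the other vectors clears their $e_i$-components. What remains is $-e_i$ together with the vectors $(c_j/g_{ij})\, e_j$ for $j \ne i$. So the determinant is, up to sign,
\[
\prod_{j \ne i} \frac{c_j}{\gcd(c_i, c_j)}.
\]

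Smoothness at $c_i e_i$ therefore holds if and only if $c_j \mid c_i$ for all $j \neq i$. Imposing this at every vertex gives mutual divisibility among all the $c_i$, which for positive integers means they are all equal.

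Conversely, if all $c_i$ are equal to a common value $c$, the directions at $c e_i$ are $-e_i$ and $e_j - e_i$ for $j \ne i$. These visibly form a basis, and the vertex $0$ was already fine.

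There is no real obstacle in this argument. The only point needing a moment's care is the primitive-vector computation, namely dividing $c_j e_j - c_i e_i$ by $\gcd(c_i, c_j)$. The case $n = 1$ is vacuous and consistent with the statement.
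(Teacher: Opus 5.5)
Your argument is correct: the paper records this as a Fact without proof, and your vertex-by-vertex computation is the routine verification it leaves implicit. At $c_i e_i$ the primitive directions are $-e_i$ and $(c_j e_j - c_i e_i)/\gcd(c_i,c_j)$, giving index $\prod_{j \neq i} c_j/\gcd(c_i,c_j)$, and requiring this to be $1$ at every vertex forces mutual divisibility and hence equality. One phrase to tighten: saturation is not ``automatic''; rather, a saturated sublattice of full rank $n$ must equal $\Z^n$, so smoothness at a vertex is exactly the condition that the determinant of the $n$ primitive directions be $\pm 1$, which is what you then check.
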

\begin{Fact}\label[Fact]{fact:MEQdescent}
    Let $P$ and $Q$ be Minkowski equivalent polytopes. For each 
    $v \in \R^n$, the faces $P_v$ of $P$ and 
    $Q_v$ of $Q$ on which $\inner{v,-}$ is maximized are Minkowski equivalent.
\end{Fact}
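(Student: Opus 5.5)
The statement to prove is \Cref{fact:MEQdescent}. I plan to argue entirely through normal fans, using the inclusion-reversing correspondence between faces and normal cones recalled above. Fix $v \in \R^n$ and let $F = P_v$ and $G = Q_v$ be the faces on which $\inner{v,-}$ is maximized. Let $\sigma$ be the unique cone of $\Sigma_P = \Sigma_Q$ containing $v$ in its relative interior. Then $\sigma = \sigma^P_F = \sigma^Q_G$, since in both polytopes the face maximizing $\inner{v,-}$ is the face whose normal cone contains $v$ in its relative interior. The goal is to show that the normal fan of $F$ is determined by $\Sigma_P$ and $\sigma$ alone, and hence coincides with that of $G$.

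The concrete step is to describe $\Sigma_F$ as follows. For $w \in \R^n$, the face $F_w$ of $F$ maximizing $\inner{w,-}$ equals $P_{v+\varepsilon w}$ for all sufficiently small $\varepsilon>0$. This is the standard lexicographic-optimization argument: a vertex maximizing $\inner{v+\varepsilon w,-}$ over $P$ must, for small $\varepsilon$, first maximize $\inner{v,-}$, hence lie in $F$, and then maximize $\inner{w,-}$ among the vertices of $F$. Consequently, two vectors $w,w'$ lie in the relative interior of the same cone of $\Sigma_F$ exactly when $v+\varepsilon w$ and $v+\varepsilon w'$ lie in the relative interior of the same cone of $\Sigma_P$ for all small $\varepsilon>0$. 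Equivalently, $\Sigma_F$ is the fan of tangent cones (the star) of $\Sigma_P$ at $v$: it consists of the cones $\sigma^P_{F'} + \operatorname{span}(\sigma)$ for the faces $F' \subseteq F$. This description uses only $\Sigma_P$ and $v$. The identical argument applied to $Q$ shows that $\Sigma_G$ is the star of $\Sigma_Q = \Sigma_P$ at $v$, so $\Sigma_F = \Sigma_G$.

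A cleaner alternative avoids fan stars and goes through support functions. We have $h_{P_v}(w) = \lim_{\varepsilon \to 0^+} \bigl(h_P(v+\varepsilon w) - h_P(v)\bigr)/\varepsilon$, the directional derivative of $h_P$ at $v$. Since $h_P$ is linear on each cone of $\Sigma_P$, the domains of linearity of this directional derivative, as a function of $w$, are determined by $\Sigma_P$ and $v$. The same then holds for $h_Q$, which has the same domains of linearity.

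The main obstacle is the small-$\varepsilon$ step: we need one $\varepsilon$ that works uniformly for all $w$, or at least enough uniformity to compare the resulting cones. Positive homogeneity should handle this: it suffices to take $w$ in the unit sphere, and by compactness together with finiteness of the fan a single threshold $\varepsilon$ should suffice. Everything else is bookkeeping with the face–cone correspondence.
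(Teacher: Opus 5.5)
Your argument is correct. The paper gives no proof of this statement---it is recorded as a standard fact---so there is no competing route to compare. Your lexicographic perturbation ($F_w = P_{v+\varepsilon w}$ and $G_w = Q_{v+\varepsilon w}$ for all sufficiently small $\varepsilon>0$) is the standard justification, and it closes the argument directly. Indeed, $F_w = F_{w'}$ iff $P_{v+\varepsilon w} = P_{v+\varepsilon w'}$ iff (because $\Sigma_P=\Sigma_Q$) $Q_{v+\varepsilon w}=Q_{v+\varepsilon w'}$ iff $G_w=G_{w'}$. A normal fan is determined by the partition of $\mathbb{R}^n$ into the relative interiors of its cones, so $\Sigma_F = \Sigma_G$.

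The ``main obstacle'' you flag is not really one. Each comparison involves only two vectors $w,w'$, so you can take $\varepsilon$ below the minimum of finitely many thresholds. A uniform threshold is explicit in any case: if $F\neq P$, let $\delta>0$ be the minimum of $\langle v, x-y\rangle$ over vertices $x\in F$, $y\notin F$ of $P$, and let $D$ be the maximum of $\lvert x-y\rvert$ over the same pairs. Then every $\varepsilon<\delta/D$ works for all unit vectors $w$.

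Of your two variants, the first is the cleaner. The support-function version, as written, only shows that $h_{P_v}$ is linear on the cones of the star of $\Sigma_P$ at $v$. To conclude that these are exactly its domains of linearity, you need one more remark: distinct maximal cones of $\Sigma_P$ containing $v$ are the normal cones of distinct vertices of $F$, so they carry distinct linear pieces.
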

\begin{Prop}\label[Prop]{prop:MEQaffine}
    If $P$ and $Q$ are Minkowski equivalent polytopes in $\mathbb{R}^n$ and 
    $f: \mathbb{R}^n \to \mathbb{R}^m$ is a linear map, then $f(P)$ and $f(Q)$ are Minkowski equivalent polytopes.
\end{Prop}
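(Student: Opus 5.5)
We work through support functions and \Cref{fact:support_criterion}. That fact lets us recognize a polytope and its normal fan from its support function.

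First, $f(P)$ is a polytope: if $P = \Conv(S)$ with $S$ finite, then by linearity $f(P) = \Conv(f(S))$. The same holds for $Q$.

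Next, we compute the support function of an image. Let $f^{*}\colon \R^m \to \R^n$ be the transpose, so $\inner{w, f(x)} = \inner{f^{*}w, x}$. For $w \in \R^m$,
\[
h_{f(P)}(w) = \sup_{x\in P}\inner{w, f(x)} = \sup_{x \in P}\inner{f^{*}w, x} = h_P(f^{*}w).
\]
Hence $h_{f(P)} = h_P\circ f^{*}$, and likewise $h_{f(Q)} = h_Q \circ f^{*}$.

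The key step is to identify the normal fan of $f(P)$ with a fan determined by $\Sigma_P$ and $f^{*}$. The maximal cones of $\Sigma_{f(P)}$ are the maximal domains of linearity of $h_{f(P)}$. We claim these are determined by the fan $\Sigma_P$ alone.

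Two vectors $w, w' \in \R^m$ lie in a common cone of $\Sigma_{f(P)}$ exactly when some vertex of $f(P)$ maximizes both $\inner{w,-}$ and $\inner{w',-}$. It is cleaner to phrase this with faces. The face of $f(P)$ maximized by $w$ is $f(P_{f^{*}w})$, where $P_v$ denotes the face of $P$ on which $\inner{v,-}$ is maximized. This holds because $x\in P$ maximizes $\inner{f^{*}w,-}$ if and only if $f(x)$ maximizes $\inner{w,-}$ over $f(P)$.

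Therefore $w$ and $w'$ lie in the relative interior of the same cone of $\Sigma_{f(P)}$ if and only if $f(P_{f^{*}w}) = f(P_{f^{*}w'})$. The normal fan of $f(P)$ is thus the partition of $\R^m$ by the equivalence relation $w\sim w' \iff f(P_{f^{*}w}) = f(P_{f^{*}w'})$.

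The main obstacle is that this relation seems to depend on $P$ itself, not just on $\Sigma_P$. Knowing that $f^{*}w$ and $f^{*}w'$ lie in different cones of $\Sigma_P$ does not immediately tell us whether the images of the two faces under $f$ coincide.

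To handle this, we use a cleaner criterion: $f(P_{f^{*}w}) \supseteq f(P_{f^{*}w'})$ if and only if $w$ lies in the normal cone $\sigma^{f(P)}_{f(P_{f^{*}w'})}$. Equivalently, every point of $f(P)$ maximizing $\inner{w',-}$ also maximizes $\inner{w,-}$.

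We then convert this into a statement about linearity. The condition is that $h_{f(P)}(w+w') = h_{f(P)}(w) + h_{f(P)}(w')$, and more generally that $h_{f(P)}$ is linear on the segment $[w,w']$. So the cones of $\Sigma_{f(P)}$ are determined by the additivity pattern of the convex function $h_P\circ f^{*}$.

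For this pattern the fan of $P$ suffices. Whether $h_P$ is linear on the segment $[f^{*}w, f^{*}w']$ depends only on whether that segment lies in a single cone of $\Sigma_P$. That is a property of $\Sigma_P$, and $\Sigma_P = \Sigma_Q$.

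Applying this to $Q$, we conclude that $h_{f(P)}$ and $h_{f(Q)}$ are linear on exactly the same segments. They therefore have the same domains of linearity, which gives $\Sigma_{f(P)} = \Sigma_{f(Q)}$. This is consistent with \Cref{fact:MEQdescent}.

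The one point needing care is the claim that a positively homogeneous convex piecewise linear function is linear on a segment if and only if the segment lies in one cone of its fan of linearity domains. This follows from convexity: if the function is linear along a segment, the supporting linear functional at an interior point of the segment supports the function along the whole segment.
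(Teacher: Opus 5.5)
Your proof is correct and takes essentially the same route as the paper's. Both rest on $h_{f(P)} = h_P\circ f^{*}$ and on the fact that where $h_P$ is linear is governed by $\Sigma_P$ alone; your segment criterion, with its convexity justification, makes precise the paper's somewhat loose claim that the domains of linearity of $h_{f(P)}$ are the preimages under $f^{*}$ of those of $h_P$. The face-based detour in the middle is unnecessary, and the inclusion $f(P_{f^{*}w'}) \subseteq f(P_{f^{*}w})$ is not equivalent to $h_{f(P)}(w+w') = h_{f(P)}(w)+h_{f(P)}(w')$ (only the forward implication holds), but your final argument does not use that equivalence.
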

\begin{proof}
    Write $f^*: \mathbb{R}^m \to \mathbb{R}^n$ for the dual map. Then 
    $h_{f(P)} = h_P \circ f^*$ shows that the domains of linearity of $h_{f(P)}$ are the preimages under $f^*$ of the domains of linearity of $h_P$. Since the same statement holds for $Q$, the claim follows.
\end{proof}

\subsection{Minkowski sums and weak Minkowski summands}
The \define{Minkowski sum} of two polytopes $P$ and $Q$ in $\R^n$ is the set 
\[
    P + Q \, \deq \, \{x+y \mid x \in P, \; y \in Q\}.
\]
The set $P + Q$ is again a polytope. In fact, it is the convex hull of the points $p + q$ as $p, q$ range over the vertices of $P, Q$ respectively. The support function of $P+Q$ is given by $h_{P + Q} = h_P + h_Q$.
Evidently, if both $P$ and $Q$ are lattice polytopes, then so is $P+Q$.

\begin{Prop}\label{prop:nef}
    Let $P, Q$ be polytopes in $\R^n$. The following are equivalent:
    \begin{enumerate}[(1)]
        \item $h_Q$ is adapted to $\Sigma_P$, i.e.\ $h_Q$ is linear on each cone of $\Sigma_P$.\label{it:adapted}
        \item $\Sigma_Q$ coarsens $\Sigma_P$, that is, each cone of $\Sigma_P$ is contained in a cone of $\Sigma_Q$.\label{it:coarsen}
        \item $P + Q$ is Minkowski equivalent to $P$.\label{it:dilute}
    \end{enumerate}
    We say that $Q$ is a \define{weak Minkowski summand} of $P$ if these equivalent conditions hold.
\end{Prop}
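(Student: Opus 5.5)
We prove the cycle (1)$\Rightarrow$(2)$\Rightarrow$(3)$\Rightarrow$(1), working throughout with support functions and normal fans.

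\emph{(1)$\Rightarrow$(2).} Suppose $h_Q$ is linear on each cone of $\Sigma_P$. Every cone of $\Sigma_P$ is a face of some maximal cone $\sigma^P_p$, and each such $\sigma^P_p$ lies in a domain of linearity of $h_Q$. These domains are exactly the maximal cones $\sigma^Q_q$ of $\Sigma_Q$. Hence every cone of $\Sigma_P$ lies in a cone of $\Sigma_Q$.

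\emph{(2)$\Rightarrow$(3).} We must show that the domains of linearity of $h_{P+Q} = h_P + h_Q$ are exactly the maximal cones of $\Sigma_P$. On each $\sigma^P_p$, the function $h_P$ is linear by definition, and $h_Q$ is linear because $\sigma^P_p$ lies in some $\sigma^Q_q$. So $h_P + h_Q$ is linear on $\sigma^P_p$.

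Conversely, we show $h_P + h_Q$ is not linear across any wall of $\Sigma_P$. Take two adjacent maximal cones $\sigma^P_p,\sigma^P_{p'}$ and suppose $h_P+h_Q$ agreed with a single linear function $\ell$ on their union. Since $h_Q$ is convex, $h_P = \ell - h_Q$ would be concave on the convex set $\sigma^P_p \cup \sigma^P_{p'}$ (or at least on a segment crossing the wall). But $h_P$ is also convex there, so it would be linear on that segment. This contradicts $p \neq p'$.

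Stated more cleanly, the cleanest form of this step is the following claim. If $f,g$ are convex and piecewise linear, then the linearity domains of $f+g$ are the common refinement of the linearity domains of $f$ and of $g$. The reason is that $f+g$ is linear on a segment only if both $f$ and $g$ are: each is convex, and their sum is affine there. Applying the claim, since $\Sigma_Q$ coarsens $\Sigma_P$, the common refinement is $\Sigma_P$ itself. Hence $\Sigma_{P+Q} = \Sigma_P$.

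\emph{(3)$\Rightarrow$(1).} Assume $\Sigma_{P+Q} = \Sigma_P$. Then $h_P + h_Q$ is linear on each maximal cone of $\Sigma_P$. By the claim again, $h_Q$ is linear on every segment inside such a cone, hence linear on the cone, and therefore on all cones of $\Sigma_P$.

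The only real content is the claim about sums of convex functions. Its proof is one line: if $f+g$ is affine on a segment and $f,g$ are convex there, then $f = (f+g) - g$ is both convex and concave, hence affine. I expect that only care with lower-dimensional $P$ (non-strictly convex fans) needs mention. The argument above never uses strict convexity, so it goes through unchanged.
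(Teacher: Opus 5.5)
Your proof is correct and follows the same cycle (1)$\Rightarrow$(2)$\Rightarrow$(3)$\Rightarrow$(1) as the paper. The one substantive difference is in (2)$\Rightarrow$(3): the paper cites Ziegler (Prop.~7.12) for the fact that $\Sigma_{P+Q}$ is the common refinement of $\Sigma_P$ and $\Sigma_Q$, whereas you prove it directly from the observation that a sum of two convex functions is affine on a segment only if each summand is. That clean claim makes your preliminary wall-crossing paragraph unnecessary, which is just as well, since its ``convex set $\sigma^P_p \cup \sigma^P_{p'}$'' is not convex in general. In (3)$\Rightarrow$(1) the paper is slightly more direct and does not need the convexity claim: $h_{P+Q}$ and $h_P$ are both linear on each maximal cone of $\Sigma_P$, so their difference $h_Q$ is too.
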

\begin{proof}
    If \ref{it:adapted} holds, then each domain of linearity of $h_P$ is contained in a domain of linearity of $h_Q$. This shows \ref{it:adapted}$\Rightarrow$\ref{it:coarsen}.
    If \ref{it:coarsen} holds, then the common refinement of $\Sigma_P$ and $\Sigma_Q$ is $\Sigma_P$. Hence $\Sigma_{P+Q} = \Sigma_P$ by \cite{ziegler95}*{Proposition 7.12}. This shows \ref{it:coarsen}$\Rightarrow$\ref{it:dilute}.
    Finally, suppose \ref{it:dilute} holds.
    On each maximal cone of $\Sigma_P$, both $h_P$ and 
    $h_{P+Q}$ are linear. Hence so is 
    $h_Q = h_{P+Q} - h_{P}$. This shows 
    \ref{it:dilute}$\Rightarrow$\ref{it:adapted}.
\end{proof}
\noindent
It is an easy consequence of the definition that the collection of weak Minkowski summands of a polytope $P$ is closed under Minkowski sum.
Also, a weak Minkowski summand of a weak Minkowksi summand of $P$ is a weak Minkowksi summand of $P$.
By \Cref{prop:nef}\ref{it:dilute}, two polytopes are Minkowski equivalent if and only if each is a weak Minkowski summand of the other. From this and \cite{grunbaum}*{\textsection~15.1.2}, one deduces:
\begin{Fact}\label[Fact]{fact:pos_parallel}
    Let $P, P'$ be convex polytopes in $\R^n$.
    Then $P$ and $P'$ are Minkowski equivalent if and only if there is a combinatorial isomorphism $P \simeq P'$ under which corresponding edges
    $pq$ and $p'q'$ are \emph{positively} parallel.
    (Here $p'$ is the vertex in $P'$ corresponding to a vertex $p$ in $P$.)
\end{Fact}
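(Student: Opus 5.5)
Both implications go through normal fans and support functions. First suppose $P$ and $P'$ are Minkowski equivalent, so $\Sigma_P=\Sigma_{P'}$. The faces of each polytope are in inclusion-reversing bijection with the cones of this common fan, via $F\mapsto\sigma^P_F$. Composing the two bijections gives a face-poset isomorphism $P\simeq P'$. Under it, a vertex $p$ corresponds to the vertex $p'$ with $\sigma^P_p=\sigma^{P'}_{p'}$, and likewise for edges.

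Now take an edge $pq$ of $P$. Its normal cone is the common facet $\sigma^P_p\cap\sigma^P_q$ of two maximal cones. The two linear functions $\inner{-,p}$ and $\inner{-,q}$ (the restrictions of $h_P$ to these cones) agree on that facet. Hence $q-p$ is orthogonal to the hyperplane spanned by the facet. Moreover $\inner{v,q-p}>0$ for $v$ in the interior of $\sigma^P_q$, because there $q$ is the unique maximizer. The same argument for $P'$ shows that $q'-p'$ is orthogonal to the same hyperplane and positive on the same open cone. So $q-p$ and $q'-p'$ are positively proportional.

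Conversely, suppose a combinatorial isomorphism makes corresponding edges positively parallel. I would show $\sigma^P_p=\sigma^{P'}_{p'}$ for every vertex $p$. The standard description of the vertex cone is
\[
\sigma^P_p=\{v : \inner{v,q-p}\le 0 \text{ for all neighbours } q \text{ of } p\}.
\]
This holds because $P$ lies in the cone generated at $p$ by its edge directions; this is the local-to-global fact in Gr\"unbaum. Since the neighbours of $p$ correspond to the neighbours of $p'$ and the edge vectors are positive multiples of one another, the two inequality systems cut out the same cone. The maximal cones of the two normal fans therefore coincide, and so do the fans, since a complete fan is determined by its maximal cones.

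The only nontrivial input is the vertex-cone description, i.e.\ that the edge directions at a vertex generate the tangent cone of the polytope there. I would cite it from \cite{grunbaum}*{\textsection~15.1.2} or prove it quickly by projecting along a generic linear functional maximized at $p$. The remaining steps are bookkeeping with the face/cone correspondence already recalled above.
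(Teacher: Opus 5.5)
Your proof is correct, but it takes a different route from the paper. The paper gives no argument of its own. It uses \Cref{prop:nef}\ref{it:dilute} to recast Minkowski equivalence as each polytope being a weak Minkowski summand of the other. It then appeals to the decomposability theory in \textsection~15.1.2 of \cite{grunbaum} to turn this into the edge condition.

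You work directly with the normal fan instead:
\begin{itemize}
\item In the forward direction, $q-p$ spans the orthogonal complement of the $(n-1)$-dimensional cone $\sigma^P_p\cap\sigma^P_q$, and the interior of $\sigma^P_q$ fixes the sign. This also shows that the canonical isomorphism induced by $\Sigma_P=\Sigma_{P'}$ is the one that works.
\item In the converse, you recover each maximal cone as $\{v : \inner{v,q-p}\le 0\}$ over the edge directions at $p$. This description is visibly unchanged when those directions are positively rescaled.
\end{itemize}

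What each route buys: the paper's is a two-line deduction that places the Fact inside the classical theory of summands. Yours is self-contained. Its only external input is the elementary fact that the edge directions at a vertex generate the tangent cone there. It also covers lower-dimensional $P$ without change, since that description of $\sigma^P_p$ does not care about the lineality space.

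One small correction: the Gr\"unbaum section you name is where the paper gets its summand input, not a natural source for the tangent-cone fact. The quick proof you allude to is the usual vertex-figure argument. Cut $P$ just below $p$ by a level set of a linear functional uniquely maximized at $p$. The section is the convex hull of the points where this hyperplane meets the edges at $p$. Every other vertex of $P$ lies beyond the hyperplane, so it lies in $p$ plus the cone generated by those edge directions.
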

\noindent
Here and elsewhere, (directed) edges $pq$ and $p'q'$ are \define{positively parallel} if the vector $q' - p'$ is a positive scalar multiple of $q-p$.

\begin{Prop}\label[Prop]{prop:WMSaffine}
    Suppose $P$ and $Q$ are polytopes in $\mathbb{R}^n$ with $Q$ a weak Minkowski summand of $P$. 
    If $f: \mathbb{R}^n \to \mathbb{R}^m$ is a linear map, then $f(Q)$ is a weak Minkowski summand of $f(P)$.
\end{Prop}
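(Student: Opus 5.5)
The plan is to argue through support functions, exactly as in the proof of \Cref{prop:MEQaffine}, and reduce to criterion \ref{it:adapted} of \Cref{prop:nef}. Let $f^*\colon \R^m \to \R^n$ be the dual map. For any polytope $R \subseteq \R^n$ and $w \in \R^m$, we have $\inner{w, f(x)} = \inner{f^*w, x}$, so taking suprema over $x \in R$ gives
\[
    h_{f(R)} = h_R \circ f^*.
\]
We apply this identity with $R = P$ and with $R = Q$.

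Next we compare domains of linearity. Each maximal cone of $\Sigma_{f(P)}$ is a domain of linearity of $h_{f(P)} = h_P \circ f^*$. Let $D$ be such a cone. I claim $f^*(D)$ lies in a single cone of $\Sigma_P$, so that $h_P$ is linear on $f^*(D)$.

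To see this, note that $h_P\circ f^*$ is linear on $D$, so all $w \in D$ share a common maximizing face of $\inner{w, f(-)}$ on $f(P)$; its interior points are maximizers of $\inner{f^*w,-}$ on $P$. To avoid this face bookkeeping, I would instead work with the equivalent condition \ref{it:dilute} of \Cref{prop:nef}. Since $f$ is linear, $f(P+Q) = f(P) + f(Q)$. The hypothesis says $P+Q$ is Minkowski equivalent to $P$. By \Cref{prop:MEQaffine}, $f(P+Q)$ is then Minkowski equivalent to $f(P)$, that is, $f(P)+f(Q)$ is Minkowski equivalent to $f(P)$. By \Cref{prop:nef}\ref{it:dilute}, this says exactly that $f(Q)$ is a weak Minkowski summand of $f(P)$.

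So the proof is essentially three lines: linearity of $f$ commutes with Minkowski sums, \Cref{prop:MEQaffine} transports Minkowski equivalence, and \Cref{prop:nef} converts back. The only point needing care is the identity $f(P+Q)=f(P)+f(Q)$, which follows immediately from linearity. There is no real obstacle; the main thing is to choose criterion \ref{it:dilute} rather than the fan-coarsening criterion, which would require the messier face argument above.
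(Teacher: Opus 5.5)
Your final argument is correct and is exactly the paper's proof: $P+Q$ is Minkowski equivalent to $P$, \Cref{prop:MEQaffine} transports this to $f(P+Q)$ and $f(P)$, linearity gives $f(P+Q)=f(P)+f(Q)$, and \Cref{prop:nef}\ref{it:dilute} concludes. The support-function discussion that opens your proposal, which you then abandon (the unproved claim about $f^*(D)$), is unnecessary and can simply be deleted.
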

\begin{proof}
    By \Cref{prop:nef}\ref{it:dilute}, $P+Q$ is Minkowski equivalent to $P$. By \Cref{prop:MEQaffine}, $f(P+Q)$ is Minkowski equivalent to $f(P)$. Since $f$ is linear, $f(P+Q) = f(P) + f(Q)$. Thus by \Cref{prop:nef}\ref{it:dilute}, $f(Q)$ is a weak Minkowski summand of $f(P)$.
\end{proof}
\noindent
The following is a convenient way to show that one polytope is a weak Minkowski summand of another.
\begin{Lm}\label{lm:Minkowski_limit}
    Let $P$ be a polytope in $\R^n$.
    Let $Q_1 \supseteq Q_2 \supseteq \dots$ be a nested sequence of polytopes Minkowski equivalent to $P$. Then 
    $Q \deq \bigcap_i Q_i$ is a weak Minkowski summand of $P$.
\end{Lm}
\begin{proof}
    Observe that the inequalities 
    $h_{Q_1} \geq h_{Q_2} \geq \dots$ hold pointwise.
    Let $h_{\infty} = \inf_{i} h_{Q_i}$. Since each function $h_{Q_i}$ is positively homogeneous, convex, and linear on each maximal cone of $\Sigma_P$, the same is true of $h_{\infty}$.
    By \Cref{fact:support_criterion}, $h_{\infty}$ is the support function of some polytope $Q_{\infty}$. By \Cref{prop:nef}, $Q_{\infty}$ is a weak Minkowski summand of $P$. For each $i$, the inequality $h_{Q_{\infty}} \leq h_{Q_i}$ implies $Q_{\infty} \subseteq Q_i$. 
    Hence $Q_{\infty} \subseteq Q$. In the opposite direction, for each $i$, $Q \subseteq Q_i$ implies
    $h_{Q} \leq h_{Q_i}$. Thus $h_{Q} \leq \inf_i h_{Q_i} = h_{Q_{\infty}}$. So $Q \subseteq Q_{\infty}$.
\end{proof}

\subsection{Cayley sums}\label{subsec:SDP}
\begin{Df}
Let $P_0, \dots, P_m$ be polytopes in $\R^n$.
Let $k$ be a positive real number.
The \define{$k$-Cayley sum} of $P_0, ..., P_m \subseteq \R^n$ is the polytope 
\[
    \Cay_k (P_0, \dots, P_m) \deq \Conv \big((P_0 \times \{s_0\}) \, \cup\, (P_1 \times \{s_1\}) \, \cup \, \dots \, \cup\,  (P_m \times \{s_m\}) \big) \subseteq \R^n \times \R^m
\]
where $s_0 = 0$ and $s_i = k e_i$ ($1 \leq i \leq m$) are the vertices of the standard $m$-simplex rescaled by a factor of $k$.
By a \define{Cayley sum} of $P_0, \dots, P_m$, we mean this polytope for some choice of $k$.\footnote{This is slightly more permissive than the prevailing convention.} 
When $k$ is an integer and $P_0, \dots, P_m$ are \emph{lattice}, we call the resulting polytope an \define{integral Cayley sum}.
We may omit the parameter $k$ in the notation, in which case the intended value is $1$.
We say that the Cayley sum
$\Cay_k(P_0, \dots, P_m)$ is \define{coherent} if $P_0, \dots, P_m$ are Minkowski equivalent polytopes.\footnote{In \cite{dickenstein}, these are called \emph{strict} Cayley sums. However, this would conflict with our nomenclature for chimneys in \Cref{sec:WMS_of_SCPS}.}
\end{Df}
\noindent
We note that an integral Cayley sum is always a lattice polytope.
We remark also that varying the parameter $k$ results in polytopes that are both Minkowski equivalent and affinely isomorphic.
The choice of $k$ becomes more significant when we are considering polytopes up to lattice isomorphism. 
\begin{Obs}\label[Obs]{obs:nested_cayley}
    Cayley sums can be defined recursively in terms of binary Cayley sums:
    \[
        \Cay_k(P_0, P_1, \dots, P_m) = 
        \Cay_k(\Cay_k(P_0, P_1, \dots, P_{m-1}), P_m \times \{0\}^{m-1}).
    \]
    
\end{Obs}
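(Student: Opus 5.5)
The plan is to prove the identity by showing that both sides are the convex hull of the same subset of $\R^n \times \R^m$. The whole argument rests on one elementary fact: for subsets $A, B$ of a real vector space,
\[
    \Conv\bigl(\Conv(A) \cup B\bigr) = \Conv(A \cup B).
\]
The inclusion $\supseteq$ holds because $A \subseteq \Conv(A)$. The inclusion $\subseteq$ holds because $\Conv(A) \cup B \subseteq \Conv(A\cup B)$ and the right-hand side is convex.

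First I will fix coordinates and unwind the right-hand side. The inner Cayley sum $C \deq \Cay_k(P_0, \dots, P_{m-1})$ lies in $\R^n \times \R^{m-1}$. It is the convex hull of the sets $P_i \times \{s_i'\}$ for $0 \le i \le m-1$, where $s_0' = 0$ and $s_i' = k e_i \in \R^{m-1}$. The polytope $P_m \times \{0\}^{m-1}$ also lies in $\R^{n+m-1}$. The outer binary Cayley sum is therefore, by definition,
\[
    \Conv\bigl( (C \times \{0\}) \cup (P_m \times \{0\}^{m-1} \times \{k\}) \bigr) \subseteq \R^{n+m-1}\times \R = \R^n \times \R^m .
\]

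Next I will identify the pieces inside $\R^n\times\R^m$. Since $\Conv$ commutes with the affine embedding $x \mapsto (x,0)$, we have $C \times \{0\} = \Conv\bigl(\bigcup_{i<m} P_i \times \{(s_i',0)\}\bigr)$. Moreover $(s_i', 0) = s_i$ for $i < m$, and $(0,\dots,0,k) = k e_m = s_m$. Applying the displayed fact with $A = \bigcup_{i<m} P_i\times\{s_i\}$ and $B = P_m \times \{s_m\}$ then gives exactly $\Conv\bigl(\bigcup_{i=0}^m P_i \times \{s_i\}\bigr) = \Cay_k(P_0,\dots,P_m)$.

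There is no genuine obstacle here. The only point requiring care is the bookkeeping of coordinates: one must check that the new last coordinate introduced by the binary Cayley sum plays the role of $e_m$. One must also check that the same parameter $k$ is used at both levels; otherwise the vertex heights would not match $s_i = k e_i$.
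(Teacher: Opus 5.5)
Your proof is correct. It is exactly the direct unwinding of the definition that the paper leaves implicit, since the paper records the statement as an observation without proof: the identity $\Conv(\Conv(A)\cup B)=\Conv(A\cup B)$, together with the coordinate checks $(s_i',0)=s_i$ for $i<m$ and $(0,\dots,0,k)=ke_m$, is all that is needed.
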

\begin{Fact}\label[Fact]{fact:cayley=combprod}
A coherent Cayley sum $\Cay_k(P_0, \dots, P_m)$ is combinatorially isomorphic to the product $P_0 \times \Delta$, where $\Delta$ is an $m$-simplex.
\end{Fact}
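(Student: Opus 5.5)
We will prove \Cref{fact:cayley=combprod} by describing the faces of $C \deq \Cay_k(P_0, \dots, P_m)$ explicitly, via its support function, and matching them with the faces of $P_0 \times \Delta$. Write $\Delta = \Conv(s_0, \dots, s_m)$. Since all $P_i$ share one normal fan $\Sigma$, every nonzero $v \in \R^n$ selects faces $(P_i)_v$ that are pairwise Minkowski equivalent. Moreover, a face of $P_0$ determines the corresponding face of each $P_i$ canonically: it is the face with the same normal cone in $\Sigma$.

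For a functional $(v,w) \in \R^n \times \R^m$ we have
\[
    h_C(v,w) = \max_i \bigl( h_{P_i}(v) + \inner{w, s_i} \bigr).
\]
The face of $C$ maximizing $(v,w)$ is therefore $\Conv\bigl(\bigcup_{i \in I} (P_i)_v \times \{s_i\}\bigr)$, where $I$ is the set of indices attaining the maximum. To every pair consisting of a face $F$ of $P_0$ (equivalently, a cone $\sigma_F$ of $\Sigma$) and a nonempty $I \subseteq \{0, \dots, m\}$, we assign the set
\[
    C_{F,I} \deq \Conv\Bigl(\bigcup_{i \in I} F_i \times \{s_i\}\Bigr),
\]
where $F_i$ is the face of $P_i$ corresponding to $F$. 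The goal is to show that $(F,I) \mapsto C_{F,I}$ is an isomorphism from the face poset of $P_0 \times \Delta$ (faces $F \times \Conv(s_i : i \in I)$) onto the face poset of $C$.

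The steps are as follows.
\begin{enumerate}
    \item \emph{Each $C_{F,I}$ is a face.} Choose $v$ in the relative interior of $\sigma_F$, so that $(P_i)_v = F_i$ for every $i$. Then choose $w$ so that $h_{P_i}(v) + \inner{w, s_i}$ equals $0$ for $i \in I$ and is negative for $i \notin I$. This is possible because the $s_i$ are affinely independent, so the values $\inner{w, s_i} + \text{const}$ can be prescribed arbitrarily.
    \item \emph{Every face arises this way.} Given any $(v,w)$, the maximizing face is $C_{F,I}$ with $F$ the face of $P_0$ selected by $v$ and $I$ the set of maximizing indices.
    \item \emph{Injectivity.} The projection of $C$ to $\R^m$ sends $C_{F,I}$ onto $\Conv(s_i : i \in I)$, which recovers $I$. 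The slice of $C_{F,I}$ over $s_i$ is $F_i$, and $F_i$ recovers $F$.
    \item \emph{Order preservation in both directions.} We have $C_{F,I} \subseteq C_{G,J}$ if and only if $I \subseteq J$ and $F_i \subseteq G_i$. Since corresponding faces of Minkowski equivalent polytopes satisfy the same inclusions (this is just inclusion of cones in $\Sigma$), the condition $F_i \subseteq G_i$ is equivalent to $F \subseteq G$.
\end{enumerate}

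The main obstacle is step 3, and specifically the slice claim. Points of $C$ lying over $s_i$ must come only from $P_i \times \{s_i\}$. This holds because $s_i$ is a vertex of $\Delta$: any convex combination whose image under the projection is $s_i$ uses only points from the $i$-th layer, so $C \cap (\R^n \times \{s_i\}) = P_i \times \{s_i\}$, and likewise for faces. The remaining steps are routine once one remembers that the $F_i$ are indexed through the common fan $\Sigma$; this common indexing is exactly where coherence is used.
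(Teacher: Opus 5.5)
Your proof is correct and complete. The paper states this result as a standard Fact without proof, so there is no argument of its own to compare against.

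Your route is the standard one. The computation $h_C(v,w)=\max_i\bigl(h_{P_i}(v)+\inner{w,s_i}\bigr)$ in effect describes the normal fan of $\Cay_k(P_0,\dots,P_m)$ as assembled from the common fan $\Sigma$ and the normal fan of $\Delta$. This is the same style of reasoning the paper uses for chimneys in \Cref{obs:chimney_fans} and in the proof of \Cref{prop:MEQ2Cayley}. You also locate the use of coherence in the right place: it guarantees that a single $v$ selects corresponding faces $F_i$ in every layer.

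One cosmetic point in step 1. Since $s_0=0$, the value $\inner{w,s_0}=0$ is forced. So when $0\in I$ you cannot make $h_{P_i}(v)+\inner{w,s_i}$ literally equal $0$ on $I$. What affine independence lets you arrange is that these quantities take a common value on $I$ and are strictly smaller off $I$. That suffices, because only the set of maximizing indices determines the face.

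Also, the slice claim you flag as the main obstacle is immediate. If a convex combination of points from the layers projects to $s_i$, affine independence of $s_0,\dots,s_m$ forces all the weight onto layer $i$.
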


\begin{Prop}\label[Prop]{prop:MEQ2Cayley}
    If $Q = \Cay_{k}(P_0, \dots, P_m)$ is a coherent Cayley sum of polytopes in $\mathbb{R}^n$ and $Q'$ is Minkowski equivalent to $Q$, then, up to translation, $Q'$ takes the form $\Cay_{k'}(P_0', \dots, P_m')$
    for polytopes $P_0', \dots, P_m'$ Minkowski equivalent to $P_0$ and some positive real number $k'$.
\end{Prop}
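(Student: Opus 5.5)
The plan is to use \Cref{fact:pos_parallel} to transport the Cayley structure of $Q$ to $Q'$ through a combinatorial isomorphism that preserves edge directions.

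\emph{Step 1: fibers of $Q$.} Write $\pi:\R^n\times\R^m\to\R^m$ for the projection, so that $\pi(Q)=k\Delta_m$, where $\Delta_m$ is the standard $m$-simplex. For each vertex $s_i$ of $k\Delta_m$, choose a linear functional $\ell_i$ on $\R^m$ that is maximized on $k\Delta_m$ exactly at $s_i$. Then $\ell_i\circ\pi$ is maximized on $Q$ exactly on the face $P_i\times\{s_i\}$.

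\emph{Step 2: the corresponding faces of $Q'$.} Let $F_i'$ be the face of $Q'$ on which $\ell_i\circ\pi$ is maximized. By \Cref{fact:MEQdescent}, $F_i'$ is Minkowski equivalent to $P_i\times\{s_i\}$, hence to $P_0\times\{0\}$ after translation, using coherence. Since Minkowski equivalent polytopes have the same affine-hull direction, $F_i'$ lies in a translate of $\R^n\times\{0\}$. We may therefore write $F_i'=P_i'\times\{t_i\}$ with $P_i'$ Minkowski equivalent to $P_0$.

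\emph{Step 3: $Q'$ is the convex hull of these faces.} By \Cref{prop:MEQaffine}, $\pi(Q')$ is Minkowski equivalent to $\pi(Q)=k\Delta_m$. A polytope Minkowski equivalent to a simplex is a translate of a positive rescaling of it, so $\pi(Q')=t+k'\Delta_m$ for some $k'>0$.

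Every vertex of $Q$ lies in some fiber $P_i\times\{s_i\}$. Take the combinatorial isomorphism $Q\simeq Q'$ from \Cref{fact:pos_parallel}, which maps faces with the same normal cone to each other. It sends the face maximizing $\ell_i\circ\pi$ to the face maximizing $\ell_i\circ\pi$, that is, $P_i\times\{s_i\}$ to $F_i'$. Hence every vertex of $Q'$ lies in some $F_i'$, and
\[
Q'=\Conv\Bigl(\textstyle\bigcup_i F_i'\Bigr).
\]

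Moreover, $\pi(F_i')$ is the face of $\pi(Q')$ maximizing $\ell_i$. This can be checked via support functions, since $h_{\pi(Q')}=h_{Q'}\circ\pi^*$ and the argmax commutes with projection. Therefore $t_i=t+k's_i/k$, which equals $t+k'e_i$ for $i\ge1$ and $t$ for $i=0$.

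\emph{Step 4: conclusion.} Translating $Q'$ by $-(0,t)$ gives exactly $\Cay_{k'}(P_0',\dots,P_m')$, where each $P_i'$ has been recentered in its own $\R^n$-coordinate. These are Minkowski equivalent to $P_0$.

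The main subtlety is Step 3: showing that $Q'$ has no vertices outside the fibers. I handle this by identifying vertices through their normal cones, which coincide for $Q$ and $Q'$ because the two polytopes share a normal fan. Concretely, every maximal cone of $\Sigma_Q$ lies inside the normal cone of some $P_i\times\{s_i\}$, so every vertex of $Q'$ lies in the corresponding $F_i'$.
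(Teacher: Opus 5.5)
Your proof is correct and follows essentially the paper's route: you use \Cref{fact:MEQdescent} to obtain the parallel fibers $P_i'$, and \Cref{prop:MEQaffine} applied to the projection $\pi$ to see that these fibers sit over a translated, rescaled simplex. The paper projects only the face $\Conv(p_0',\dots,p_m')$ spanned by corresponding vertices rather than all of $Q'$, and it leaves implicit the check you make explicitly, that every vertex of $Q'$ lies in some fiber; note, however, that in your last paragraph the inclusion is reversed: the normal cone of $P_i\times\{s_i\}$ is a face of (hence contained in) the maximal cone $\sigma_q^Q$ for a vertex $q\in P_i\times\{s_i\}$, and it is this containment, giving $\ell_i\circ\pi\in\sigma_{q'}^{Q'}$, that places $q'$ in $F_i'$.
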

\begin{proof}
    As $P_0, \dots, P_m$ are parallel faces of $Q$, we have corresponding parallel faces 
    $P_0', \dots, P_m'$ of $Q'$ Minkowski equivalent to them by \Cref{fact:MEQdescent}. If we choose corresponding vertices 
    $p_0, \dots, p_m$ and $p_0', \dots, p_m'$
    in $P_0, \dots, P_m$ and $P_0', \dots, P_m'$ respectively, then
    another application of \Cref{fact:MEQdescent} shows that
    the simplices 
    $\Delta \deq \Conv(p_0, \dots, p_m)$
    and $\Delta' \deq \Conv(p_0', \dots, p_m')$ are Minkowski equivalent. Writing 
    $\pi: \mathbb{R}^n \times \mathbb{R}^m \to \mathbb{R}^m$ for the second projection, the images $\pi(\Delta)$ and $\pi(\Delta')$ of these simplices are Minkowski equivalent by \Cref{prop:MEQaffine}. Since $\pi(\Delta)$ is a translated positive rescaling of the standard simplex in $\R^m$, so is $\pi(\Delta')$.
    This implies that $Q'$ is a Cayley sum of $P_0', \dots, P_m'$.
\end{proof}
\noindent
With these facts in mind, we make the following definition.
\begin{Df}\label[Df]{df:Cayley_tower}
    A polytope is a \define{Cayley tower} if it either consists of a single point or is a coherent Cayley sum of the form 
    $\Cay_{k}(P_0, \dots, P_m)$ for some positive real number $k$ where $P_0, \dots, P_m$ are themselves translated (Minkowski equivalent) Cayley towers.

    A polytope is an \define{integral Cayley tower} if it either consists of a single \emph{lattice} point or is a coherent Cayley sum of the form 
    $\Cay_{k}(P_0, \dots, P_m)$ for some positive \emph{integer} $k$ where $P_0, \dots, P_m$ are themselves translated (Minkowski equivalent) \emph{integral} Cayley towers.
\end{Df}
\noindent
It follows immediately from the definition and 
\Cref{fact:cayley=combprod} that Cayley towers are combinatorially isomorphic to products of simplices. Similarly, from \Cref{prop:MEQ2Cayley} it follows that being a Cayley tower up to translation is a property invariant under Minkowski equivalence.
An easy inductive argument shows:
\begin{Obs}\label[Obs]{obs:integral=lattice}
    A Cayley tower is an \emph{integral} Cayley tower if and only if it is a lattice polytope.
\end{Obs}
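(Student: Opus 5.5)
We argue by induction on the recursive structure in \Cref{df:Cayley_tower}, measured by dimension (or by the number of points of the tower). The forward direction is immediate. An integral Cayley tower is either a single lattice point, or a coherent Cayley sum $\Cay_k(P_0,\dots,P_m)$ with $k$ a positive integer and each $P_i$ a translate of an integral Cayley tower. In the second case we would like to say each $P_i$ is a lattice polytope. For this we have to read ``translated'' as translation by a lattice vector, or equivalently note that each $P_i$ is itself an integral Cayley tower placed in $\R^n$. Then $\Cay_k(P_0,\dots,P_m)$ is an integral Cayley sum, which we noted is always a lattice polytope.

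For the converse, let $Q=\Cay_k(P_0,\dots,P_m)$ be a Cayley tower that is a lattice polytope, with $m\ge 1$; a single point is trivial. The plan has three steps.

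\emph{Step 1.} Each face $P_i\times\{s_i\}$ of $Q$ is a face of a lattice polytope, so its vertices are lattice points. Hence each $P_i$ is a lattice polytope.

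\emph{Step 2.} $k$ is an integer. Pick any vertex $p$ of $P_0$ and the corresponding vertex $p'$ of $P_1$ under the combinatorial isomorphism given by Minkowski equivalence. Both $(p,0)$ and $(p',ke_1)$ are lattice points, so their difference $(p'-p,\,ke_1)$ is integral. Its last coordinates give $k\in\Z$.

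\emph{Step 3.} Each $P_i$ is, up to translation, a Cayley tower of smaller dimension that is a lattice polytope. By the induction hypothesis it is then an integral Cayley tower.

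The main obstacle is the bookkeeping of translations in Step 3. The definition only says $P_i$ is a translated Cayley tower $T_i+t_i$, and $t_i$ need not be a lattice vector. We resolve this by replacing $T_i$ with $P_i$ itself. Being a Cayley tower up to translation is invariant under translation, since a translate of $\Cay_{k'}(R_0,\dots)$ is $\Cay_{k'}$ of the translated summands, with the extra coordinates shifted. So $P_i$ is itself a Cayley tower, living in the appropriate affine subspace, and it is lattice by Step 1.

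The induction then applies to $P_i$, provided ``Cayley tower'' allows a Cayley sum to sit in $\R^n\times\R^m$ up to translation. This is the implicit convention in \Cref{df:Cayley_tower}, and the translation vectors can then be taken to be lattice vectors because the relevant vertices are lattice points. Step 2, by contrast, is routine once the corresponding vertices of the coherent summands are chosen.
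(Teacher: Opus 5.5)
Your proposal is correct and is the easy induction the paper alludes to without giving details; you also correctly flag the one point that needs care, namely the translation bookkeeping in Step 3. To make that step precise, note that a Cayley sum is not literally translation-invariant, since $s_0 = 0$ is fixed. So write $P_i = T_i + t_i$ with $T_i$ a Cayley tower: the components of $t_i$ along the simplex coordinates of $T_i$ (and along any extra ambient coordinates) are forced to be integral by the lattice vertices of $P_i$, while the base component can be absorbed into the summands of $T_i$. Thus $P_i$ is a lattice translate of a lattice Cayley tower, to which the induction hypothesis applies.
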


\subsection{Triangulations}
Let $P$ be a polytope in $\R^n$. A \define{triangulation} of $P$ is a finite collection $\c T$ of simplices satisfying:
\begin{itemize}
    \item The union $\bigcup_{T \in \c T} T$ is equal to $P$ as a set.
    \item For any simplex $T \in \c T$, each face of $T$ is again in $\c T$.
    \item For any two simplices $T, T' \in \c T$, the intersection $T \cap T'$ is a face of both $T$ and $T'$. 
\end{itemize}
The simplices of $\c T$ are its \define{faces}.
The \define{vertices} of $\c T$ are the 0-dimensional simplices in $\c T$. A nonempty collection $V$ of vertices of $\c T$ is a \define{non-face} if there is no simplex in $\c T$ whose set of vertices is exactly $V$. The triangulation $\c T$ is \define{flag} if its minimal non-faces have cardinality 2.

A triangulation $\c T$ as above is \define{regular} if there exists a real-valued convex function $f$ with domain $P$
whose domains of linearity are 
precisely the maximal simplices of $\mathcal{T}$.

A \define{unimodular} simplex is one which is lattice isomorphic to a standard simplex.
When $P$ is a lattice polytope, we say that a triangulation $\c T$ of $P$ is \define{unimodular} provided each simplex in $\c T$ is unimodular. It is \define{quadratic} if it is simultaneously unimodular, regular and flag.

\section{The structure theorem} \label{sec:structure_thm}

Our goal in this section is to prove Theorems \ref{thm:2faces}\ref{thm:combprodsimp} and \ref{thm:struct_thm_smooth}.
To this end, we fix a simple polytope $P$ whose two-dimensional faces consist only of triangles and trapezoids. By \Cref{thm:2faces}\ref{thm:combprod}, such a polytope must be a combinatorial product of simplices. We may therefore fix a combinatorial isomorphism 
$P \simeq \Delta_1\times \dots \times \Delta_r$, where each $\Delta_i$ is a simplex. 
We write $E(p)$ for the set of edges of $P$ incident to the vertex $p$ of $P$.
Given a vertex $\xi$ of some $\Delta_i$, we write $P_{\xi}$ for the face of $P$ corresponding to the subproduct
\[
    \Delta_1 \times \dots \times \Delta_{i - 1} \times \{\xi\} \times \Delta_{i + 1} \times \dots \times \Delta_r.
\]
\noindent
Our immediate aim is to prove the following result.
\begin{Prop}\label[Prop]{prop:parallel_main}
    There exists some $i \in [r]$ such that  the faces $P_{\xi}$ for vertices ${\xi} \in \Delta_i$ are all parallel.
\end{Prop}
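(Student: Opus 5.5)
The plan is to encode, for each pair of factors, which family of edges is forced to be parallel. We then show this encoding is a strict partial order and take $i$ to be an element with no outgoing arrow.

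\textbf{Setup.} Every edge of $P$ corresponds to changing the vertex in exactly one factor $\Delta_j$; call such an edge a $j$-edge. A 2-face of $P$ is either a triangle, when it lies in a single factor, or a quadrilateral with two opposite $i$-edges and two opposite $j$-edges for some $i \neq j$; call the latter an $(i,j)$-square. Being a trapezoid, an $(i,j)$-square has its $i$-edges parallel or its $j$-edges parallel. Write $i \to j$ if some $(i,j)$-square has non-parallel $j$-edges, which forces its $i$-edges to be parallel. The proposition then reduces to the following claim.

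\textbf{Claim.} If $i$ has no outgoing arrow, the faces $P_\xi$, $\xi \in \Delta_i$, are parallel.

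Take two vertices $\xi, \eta$ of $\Delta_i$. Since $\xi\eta$ is an edge of $\Delta_i$, the faces $P_\xi$ and $P_\eta$ are matched vertex-by-vertex by $i$-edges. Each $j$-edge of $P_\xi$ ($j \neq i$), together with its partner in $P_\eta$, spans an $(i,j)$-square. By assumption the two $j$-edges of that square are parallel. Hence corresponding edges of $P_\xi$ and $P_\eta$ are parallel. At a vertex, the edges of a face span the direction space of its affine hull, so the affine hulls are translates of each other. It therefore remains to find an $i$ with no outgoing arrow.

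\textbf{Local consistency and antisymmetry.} We first show that whether the $j$-edges of an $(i,j)$-square are parallel does not depend on which $(i,j)$-square is chosen. Any two $(i,j)$-squares are joined by a chain in which consecutive squares lie in a common 3-face. That 3-face is a triangular prism or a combinatorial cube with types $i,j,k$.

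In the cube case ($k \neq i,j$), consider two opposite $(i,j)$-squares joined by $k$-edges. Suppose the first square has non-parallel $j$-edges, so its $i$-edges are parallel, while the second has parallel $j$-edges. I will try to derive a contradiction by projecting along the direction of the parallel edges and using the trapezoid conditions on the four $(i,k)$- and $(j,k)$-squares. The prism cases, where the triangle lies in $\Delta_i$ or $\Delta_j$, are handled by similar planar arguments. One uses that the three edges of a triangle are not all parallel, together with the trapezoid conditions on the three lateral quadrilaterals.

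Given consistency, a single $(i,j)$-square cannot have both pairs of edges non-parallel, so $i \to j$ and $j \to i$ cannot both hold.

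\textbf{Transitivity.} Next we show that $i \to j$ and $j \to k$ imply $i \to k$. This again reduces to a single 3-face of type $(i,j,k)$, a combinatorial cube all of whose faces are trapezoids. In such a cube the $i$-edges are parallel within each $(i,j)$-square. The $j$-edges are parallel within each $(j,k)$-square but not within $(i,j)$-squares, and the $k$-edges are not parallel within $(j,k)$-squares. We must rule out that the $k$-edges are parallel in the $(i,k)$-squares. I would try to do this by writing the eight vertices explicitly with $i$-edges and $j$-edges as prescribed, then checking planarity of the faces. This cube/prism case analysis, for both consistency and transitivity, is where I expect the real work, and potential subtlety, to lie.

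\textbf{Conclusion.} The relation $\to$ is irreflexive, antisymmetric and transitive on the finite set $[r]$, so it is a strict partial order. Any maximal element $i$ has no outgoing arrow, and the Claim gives the proposition.
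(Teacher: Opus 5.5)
\textbf{Verdict: there is a genuine gap.} Your Claim is correct; in global form it is exactly the last step of the paper's proof. The gap is in showing that some type has no outgoing arrow. Both your antisymmetry argument and your reduction of transitivity to a single cube rest on the ``local consistency'' statement, and that statement is false in the prism case and in the cube case.

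\emph{Prism.} Take $\operatorname{Conv}\{(0,0,0),(1,0,0),(0,1,0),(0,0,1),(1,0,1),(0,1,2)\}$, with $i$ the type of the vertical edges and $j$ the triangle type. All three $(i,j)$-squares are trapezoids, because their $i$-edges are vertical. Their $j$-edges are parallel in the face $\{x_2=0\}$ but not in the face $\{x_1=0\}$, where they are $(0,1,0)$ and $(0,1,1)$. (Consistency does hold for the lateral edges of a prism; this is the paper's rule (b). It fails for the triangle edges.)

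\emph{Cube.} Let $P=\operatorname{Cay}(Q_0,Q_1)$ with $Q_0=\operatorname{Conv}\{(0,0),(1,0),(0,1),(2,1)\}$ and $Q_1=\operatorname{Conv}\{(0,0),(1,0),(0,2),(3,2)\}$. These are Minkowski equivalent, so $P$ is a combinatorial cube, and all six of its 2-faces are trapezoids. Let $a$ be the type of the edges in direction $e_1$, $b$ the other edge type of the $Q_\ell$, and $c$ the Cayley type.
\begin{itemize}
\item The two $(a,c)$-squares are the opposite faces $\{x_2=0\}$, a unit square, and $\{x_2-x_3=1\}$, whose $c$-edges $(0,1,1)$ and $(1,1,1)$ are not parallel. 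They are joined by $b$-edges, so this is precisely the configuration you hoped to rule out.
\item At the origin you have $a\to b$ (witnessed by $Q_0\times\{0\}$) and $b\to c$ (witnessed by $\{x_1=0\}$), yet the $(a,c)$-square at the origin is a unit square.
\end{itemize}
So the parallelism pattern inside a single cube need not be uniform, and a vertex-local version of $\to$ need not be transitive. In both examples the proposition itself holds, via the triangle type and the Cayley type respectively; only your lemma fails.

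\textbf{What is missing} is an acyclicity argument that tolerates this variation from square to square. The paper never makes a global statement.
\begin{itemize}
\item It fixes a vertex $p$. It says an edge at $p$ moves another if the square they span has the second edge parallel to its opposite edge.
\item It proves five rules for squares, prisms and cubes through a vertex (Lemma~\ref{lm:rules}).
\item It shows the ``does not move'' digraph on the edges at $p$ is acyclic. Take a cycle of minimal length over all vertices simultaneously; its edges have pairwise distinct types. Transport it along its first edge $\varepsilon_1$ to the other endpoint via $\tau_{\varepsilon_1}$, using rules (c)--(e). This gives a strictly shorter cycle there, a contradiction.
\item Acyclicity, not transitivity, yields a total order on the types at $p$. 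For an extremal type $i$ at $p$, each $P_\zeta$ is compared with the face $P_\xi\ni p$ using only the squares at $p$. This works because $P_\zeta$ is reached from $p$ by a single type-$i$ edge.
\end{itemize}
Your global relation $\to$ is in fact acyclic, but only a posteriori, from the resulting Cayley tower structure. Proving it directly requires an argument of this kind rather than the consistency lemma.
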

\noindent
\Cref{prop:parallel_main} generalizes a result of the first author for combinatorial cubes \cite{curtis_cubes}*{Theorem 3.7}, which one recovers by imposing $\dim \Delta_i = 1$ for all $i$.
We begin with a couple of definitions.
\begin{Df}
    In the combinatorial product $P\simeq \Delta_1\times \dots \times \Delta_r$, an edge $\varepsilon$ of $P$ corresponds to a choice of an edge in some factor $\Delta_i$ and a vertex in all remaining factors. We then say that $\varepsilon$ has \define{type} $i$. 
\end{Df}
\noindent
Since $P$ is a simple polytope, for any edges $\varepsilon_1, \dots, \varepsilon_k$ in $P$ incident to a fixed vertex $p$, there is a unique face $F$ of $P$ containing $p$ such that the edges of $F$ incident to $p$ are exactly 
$\varepsilon_1, \dots, \varepsilon_k$. In such a case, we will say that $\varepsilon_1, \dots, \varepsilon_k$ \define{span} the face $F$.
Two adjacent edges in $P$ span a triangle if and only if they have the same type. Otherwise, they span a quadrilateral.

\begin{Df}\label[Df]{df:arrows}
Suppose $\varepsilon, \delta$ are edges of $P$ incident to a common vertex.
\begin{itemize}
    \item We write $\varepsilon \sim \delta$ if $\varepsilon$ and $\delta$ have the same type. We write $\varepsilon \nsim \delta$ otherwise.
    \item If $\varepsilon \nsim \delta$, consider the 2-face spanned by $\varepsilon$ and $\delta$.
    We write $\tau_{\varepsilon} \delta$ for the edge in this quadrilateral situated opposite $\delta$.
    % We call $\tau_{\varepsilon} \delta$ the \emph{translate} of $\delta$ along $\varepsilon$.
    Note that $\delta$ and $\tau_{\varepsilon} \delta$ have the same type and
    ${\tau_{\varepsilon} \tau_{\varepsilon} \delta = \delta}$.
    \item We write $\varepsilon \midarrow \delta$ to mean that $\varepsilon \nsim \delta$ and $\tau_{\delta} \varepsilon$ is parallel to $\varepsilon$.
    We also say that $\delta$ moves $\varepsilon$.
    \item We write $\varepsilon \midnarrow \delta$ to mean that $\varepsilon \nsim \delta$ and that $\tau_{\delta} \varepsilon$ is \emph{not} parallel to $\varepsilon$. We also say that $\delta$ does not move $\varepsilon$.
\end{itemize}
\end{Df}

\begin{Lm}\label[Lm]{lm:rules}
    The following hold for any edges $\varepsilon, \delta, \gamma$ in $P$ incident to a common vertex.
    \begin{enumerate}
        \item If $\varepsilon \midnarrow \delta$ then $\delta \midarrow \varepsilon$.\label{it:either_or}
        \item If $\varepsilon \midarrow \delta$ and $\delta \sim \gamma$ then $\varepsilon \midarrow \gamma$.\label{it:equiv}
        \item If $\varepsilon \midarrow \delta$
        then $\tau_{\delta} \varepsilon \midarrow \delta$.\label{it:symmetric}
        \item If $\varepsilon \midarrow \delta$,
        $\varepsilon \midarrow \gamma$ and $\gamma \nsim \delta$
        then $\tau_\gamma \varepsilon \midarrow \tau_{\gamma} \delta$.\label{it:cube1}
        \item If $\varepsilon \midarrow \delta$, 
        $\varepsilon \midnarrow \gamma$ and $\gamma \midnarrow \delta$ then $\tau_{\gamma} \varepsilon \midnarrow \tau_{\gamma} \delta$.\label{it:cube2}
    \end{enumerate}
\end{Lm}
\begin{proof}
    We shall see that each of the above claims is a relatively simple statement about a polytope combinatorially isomorphic to a square, a triangular prism or a three-dimensional cube.
    Note that every face of $P$ is a combinatorial product of simplices.
    \begin{enumerate}
        \item The assumption by definition implies $\varepsilon \nsim \delta$. So $\varepsilon$ and $\delta$ form a quadrilateral 2-face of $P$, which must be a trapezoid by hypothesis.
        By assumption, $\varepsilon$ is not parallel to its opposite edge in this quadrilateral. Hence $\delta$ must be parallel to its opposite edge.
        \item The assumptions $\varepsilon \nsim \delta$ and $\delta \sim \gamma$ imply that the 3-face of $P$ spanned by $\varepsilon, \delta, \gamma$ is combinatorially isomorphic to a triangular prism, as depicted in \Cref{fig:rule_b}.
        \begin{figure}[ht]
            \centering
            \def\svgwidth{0.4\textwidth}
            %% Creator: Inkscape 1.4.2 (ebf0e940, 2025-05-08), www.inkscape.org
%% PDF/EPS/PS + LaTeX output extension by Johan Engelen, 2010
%% Accompanies image file 'prism_img.pdf' (pdf, eps, ps)
%%
%% To include the image in your LaTeX document, write
%%   \input{<filename>.pdf_tex}
%%  instead of
%%   \includegraphics{<filename>.pdf}
%% To scale the image, write
%%   \def\svgwidth{<desired width>}
%%   \input{<filename>.pdf_tex}
%%  instead of
%%   \includegraphics[width=<desired width>]{<filename>.pdf}
%%
%% Images with a different path to the parent latex file can
%% be accessed with the `import' package (which may need to be
%% installed) using
%%   \usepackage{import}
%% in the preamble, and then including the image with
%%   \import{<path to file>}{<filename>.pdf_tex}
%% Alternatively, one can specify
%%   \graphicspath{{<path to file>/}}
%% 
%% For more information, please see info/svg-inkscape on CTAN:
%%   http://tug.ctan.org/tex-archive/info/svg-inkscape
%%
\begingroup%
  \makeatletter%
  \providecommand\color[2][]{%
    \errmessage{(Inkscape) Color is used for the text in Inkscape, but the package 'color.sty' is not loaded}%
    \renewcommand\color[2][]{}%
  }%
  \providecommand\transparent[1]{%
    \errmessage{(Inkscape) Transparency is used (non-zero) for the text in Inkscape, but the package 'transparent.sty' is not loaded}%
    \renewcommand\transparent[1]{}%
  }%
  \providecommand\rotatebox[2]{#2}%
  \newcommand*\fsize{\dimexpr\f@size pt\relax}%
  \newcommand*\lineheight[1]{\fontsize{\fsize}{#1\fsize}\selectfont}%
  \ifx\svgwidth\undefined%
    \setlength{\unitlength}{779.62211068bp}%
    \ifx\svgscale\undefined%
      \relax%
    \else%
      \setlength{\unitlength}{\unitlength * \real{\svgscale}}%
    \fi%
  \else%
    \setlength{\unitlength}{\svgwidth}%
  \fi%
  \global\let\svgwidth\undefined%
  \global\let\svgscale\undefined%
  \makeatother%
  \begin{picture}(1,0.7183633)%
    \lineheight{1}%
    \setlength\tabcolsep{0pt}%
    \put(0,0){\includegraphics[width=\unitlength,page=1]{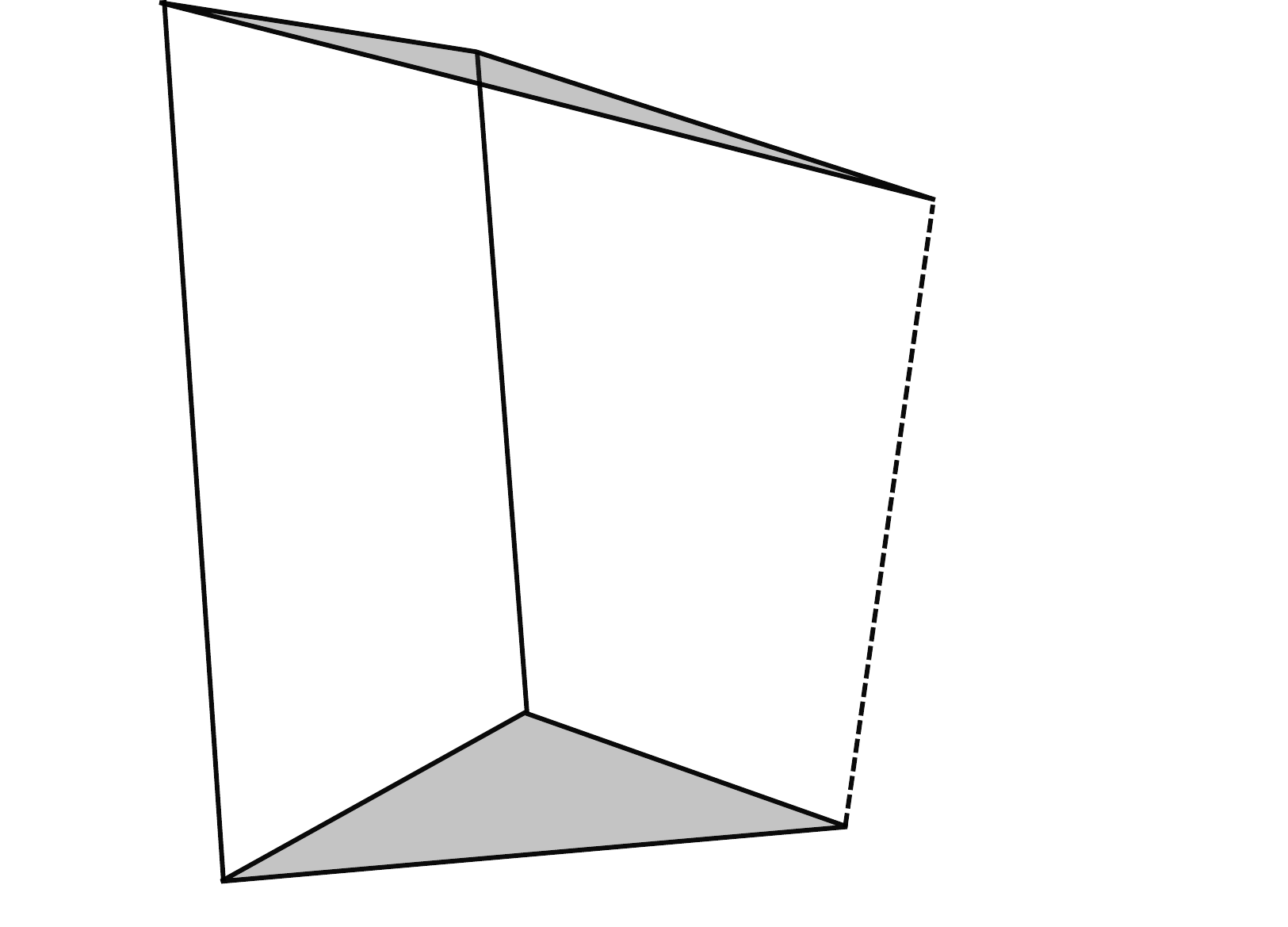}}%
    \put(0.16296137,0.34634879){\color[rgb]{0,0,0}\transparent{0.95899999}\makebox(0,0)[lt]{\lineheight{1.25}\smash{\begin{tabular}[t]{l}$\varepsilon$\end{tabular}}}}%
    \put(0.27017142,0.12093362){\color[rgb]{0,0,0}\transparent{0.95899999}\makebox(0,0)[lt]{\lineheight{1.25}\smash{\begin{tabular}[t]{l}$\delta$\end{tabular}}}}%
    \put(0.42852061,0.01665389){\color[rgb]{0,0,0}\transparent{0.95899999}\makebox(0,0)[lt]{\lineheight{1.25}\smash{\begin{tabular}[t]{l}$\gamma$\end{tabular}}}}%
    \put(0.31756299,0.41114689){\color[rgb]{0,0,0}\transparent{0.95899999}\makebox(0,0)[lt]{\lineheight{1.25}\smash{\begin{tabular}[t]{l}$\tau_{\delta} \varepsilon$\end{tabular}}}}%
    \put(0.60385451,0.28709383){\color[rgb]{0,0,0}\transparent{0.95899999}\makebox(0,0)[lt]{\lineheight{1.25}\smash{\begin{tabular}[t]{l}$\tau_{\gamma} \varepsilon$\end{tabular}}}}%
    \put(0,0){\includegraphics[width=\unitlength,page=2]{images/prism_img.pdf}}%
  \end{picture}%
\endgroup%
            \caption{Depiction of rule (b).}
            \label{fig:rule_b}
        \end{figure}
        Let $L$, $L'$ and $L''$ be the affine lines spanned by 
        $\varepsilon$, $\tau_{\delta} \varepsilon$ and $\tau_{\gamma} \varepsilon$ respectively. By hypothesis, $L$ and $L'$ are parallel. We need to show that $L$ and $L''$ are parallel. Suppose not. Since $L$ and $L''$ are coplanar, they must intersect. Since $L'$ and $L''$ cannot be parallel but are coplanar, they must also intersect. Since $L \cap L' = \varnothing$, the line $L''$ intersects the plane spanned by $L$ and $L'$ in two distinct points. Hence $L$, $L'$ and $L''$ must be jointly coplanar, which is a contradiction. 
        \item The assumption implies $\varepsilon \nsim \delta$. 
        So there is a (unique) 2-face of $P$ containing the edges $\varepsilon$, $\delta$ and $\tau_{\delta} \varepsilon$. Both assertions now become the statement that $\varepsilon$ and $\tau_{\delta} \varepsilon$ are parallel.
        \item The assumptions imply that $\varepsilon, \delta, \gamma$ belong to three distinct types, so span a 3-face $C$ of $P$ which is a combinatorial cube.
        The assumptions are now that $\varepsilon$, $\tau_{\delta} \varepsilon$ 
        and $\tau_{\gamma} \varepsilon$ are all parallel.
        Let $H$ be the affine plane spanned by the parallel edges $\tau_{\gamma} \varepsilon$ and $\tau_{\delta} \varepsilon$. Then $H$ divides $C$ into two (combinatorial) triangular prisms in which each quadrilateral facet is a trapezoid. Applying the argument in \ref{it:equiv} to these prisms completes the proof; see \Cref{fig:d}.
        \begin{figure}[H]
        % \centering
        \resizebox{0.7\linewidth}{!}{
            \definecolor{sketchblue}{HTML}{3B82F6}
\definecolor{sketchpurple}{HTML}{A020C0}

\begin{tikzpicture}[line width=1.1pt,
    % oblique projection: x right, y into the page (up-right), z up
    x={(1cm,0cm)}, y={(0.35cm,0.36cm)}, z={(0cm,1cm)},
    cut edge/.style={line width=2.8pt}]

  \def\s{2.6}    % base side (lengths of gamma and delta)
  \def\hO{2.35}  % height eps at the origin
  \def\hD{2.8}   % height tau_delta eps at delta
  \def\hG{2.8}   % height tau_gamma eps at gamma
  \def\hGD{4.05} % height at gamma+delta
  \def\dd{0.9}   % separation of the two prisms along (1,1,0)

  % --- The unsplit solid ---
  \begin{scope}
    % base
    \draw (0,0,0) -- (\s,0,0) -- (\s,\s,0) -- (0,\s,0) -- cycle;
    % vertical edges (the two cut edges thick)
    \draw (0,0,0) -- (0,0,\hO);
    \draw[cut edge] (\s,0,0) -- (\s,0,\hG);
    \draw[cut edge] (0,\s,0) -- (0,\s,\hD);
    \draw (\s,\s,0) -- (\s,\s,\hGD);
    % top edges
    \draw (0,0,\hO) -- (\s,0,\hG) (0,0,\hO) -- (0,\s,\hD);
    \draw (\s,\s,\hGD) -- (\s,0,\hG) (\s,\s,\hGD) -- (0,\s,\hD);
    % labels
    \node[left]  at (0,0,1.15)      {\Large $\varepsilon$};
    \node[below] at (1.3,0,0)       {\Large $\gamma$};
    \node        at (-0.35,1.8,-0.1) {\Large $\delta$};
    \node[right] at (0.05,\s,1.35)  {\Large $\tau_\delta\varepsilon$};
    \node[right] at (\s,0,1.45)     {\Large $\tau_\gamma\varepsilon$};
  \end{scope}
  \node[right] at ($(\s+0.5,0,0.45)$)
      {\Large $\tau_\gamma\delta$};

  % --- Squiggly arrow ---
  \draw[-Stealth, line width=1pt, decorate,
        decoration={snake, amplitude=0.5mm, segment length=2.6mm, post length=1.6mm}]
    (5.2,0,2.1) -- (6.5,0,2.1);

  % --- The two triangular prisms ---
  \begin{scope}[shift={(7.6,0,0)}]

    % Purple prism over the triangle gamma, delta, gamma+delta
    \begin{scope}[sketchpurple, shift={(\dd,\dd,0)}]
      \draw (\s,0,0) -- (\s,\s,0) -- (0,\s,0) -- cycle;              % bottom
      \draw (\s,0,\hG) -- (\s,\s,\hGD) -- (0,\s,\hD) -- (\s,0,\hG);  % top
      \draw[cut edge] (\s,0,0) -- (\s,0,\hG);
      \draw[cut edge] (0,\s,0) -- (0,\s,\hD);
      \draw (\s,\s,0) -- (\s,\s,\hGD);
    \end{scope}
    \node[right] at ($(\s,0,0.45)+(\dd + 0.4,\dd,0)$)
      {\Large $\tau_\gamma\delta$};

       % Blue prism over the triangle 0, gamma, delta
    \begin{scope}[sketchblue]
      \draw (0,0,0) -- (\s,0,0) -- (0,\s,0) -- cycle;                % bottom
      \draw (0,0,\hO) -- (\s,0,\hG) -- (0,\s,\hD) -- (0,0,\hO);      % top
      \draw (0,0,0) -- (0,0,\hO);
      \draw[cut edge] (\s,0,0) -- (\s,0,\hG);
      \draw[cut edge] (0,\s,0) -- (0,\s,\hD);
    \end{scope}
    \node[left]  at (0,0,1.15)      {\Large $\varepsilon$};
    \node[below] at (1.3,0,0)       {\Large $\gamma$};
    \node        at (-0.35,1.8,-0.1) {\Large $\delta$};

    % identification labels between the matching cut edges
    \node at ($(0,\s,1.2)!0.5!(\dd,\s+\dd,1.2)$)
      {\small $\leftarrow\!\tau_\delta\varepsilon\!\rightarrow$};
    \node at ($(\s,0,1.75)!0.5!(\s+\dd,\dd,1.75)$)
      {\small $\leftarrow\!\tau_\gamma\varepsilon\!\rightarrow$};
  \end{scope}
\end{tikzpicture}
        }
        \caption{Depiction of rule (d).}
        \label{fig:d}
        \end{figure}
        
        \item As in \ref{it:cube1}, the assumptions imply that $\varepsilon, \delta, \gamma$ span a 3-face of $P$ combinatorially isomorphic to a cube. Arguing in the contrapositive, we will show that 
        $\varepsilon \midarrow \delta$, $\varepsilon \midnarrow \gamma$
        and $\tau_\gamma \varepsilon \midarrow \tau_{\gamma} \delta$ together imply $\gamma \midarrow \delta$. 
        Let $\Pi$ be the affine plane spanned by $\varepsilon$ and $\gamma$ and let $\tau_{\delta} \Pi$ be the plane spanned by $\tau_{\delta} \varepsilon$ and $\tau_{\delta} \gamma$.
        The first assumption means that $\varepsilon$ is parallel to $\tau_{\delta} \varepsilon$.
        The third assumption means that $\tau_{\gamma} \varepsilon$
        is parallel to $\tau_{\tau_{\gamma} \delta} \tau_{\gamma} \varepsilon$.
        The second assumption means that $\varepsilon$ and $\tau_{\gamma} \varepsilon$ are \emph{not} parallel. 
        Thus the non-parallel lines spanned by $\varepsilon$ and $\tau_{\gamma} \varepsilon$ on the plane $\Pi$ are each parallel to the plane $\tau_{\delta} \Pi$. Hence $\Pi$ and $\tau_{\delta} \Pi$ are parallel. 
        In particular, the intersections of $\Pi$ and $\tau_{\delta} \Pi$ with the plane spanned by $\gamma$ and $\delta$ are parallel. That is, $\gamma$ and $\tau_{\delta} \gamma$ are parallel.
        All this is illustrated in \Cref{fig:e}.
        \qedhere
        \end{enumerate}
\end{proof}

\begin{figure}[h!t]
        % \centering
        \resizebox{0.5\linewidth}{!}{
            \begin{tikzpicture}[line width=1.1pt]
  % bottom face vertices
  \coordinate (A) at (1.0,1.3);   % origin
  \coordinate (B) at (7.4,1.3);   % A + gamma
  \coordinate (C) at (2.7,2.8);   % A + delta
  \coordinate (D) at (9.1,2.8);   % C + tau_delta gamma = B + tau_gamma delta
  % top face vertices
  \coordinate (A') at (2.56,6.34); % A + eps
  \coordinate (B') at (7.5,5.0);   % B + tau_gamma eps
  \coordinate (C') at (4.44,8.04); % C + tau_delta eps
  \coordinate (D') at (9.1,6.5);   % D + tau_{tau_gamma delta} tau_gamma eps

  % bottom face
  \draw (A) -- (B) -- (D) -- (C) -- cycle;
  % vertical edges
  \draw (A) -- (A') (B) -- (B') (C) -- (C') (D) -- (D');
  % top face
  \draw (A') -- (B') -- (D') -- (C') -- cycle;

  % edge labels
  \node at (3.0,0.85)  {\Large $\gamma$};
  \node at (1.75,2.35) {\Large $\delta$};
  \node at (4.8,2.45)  {\Large $\tau_\delta\gamma$};
  \node at (8.75,1.75) {\Large $\tau_\gamma\delta$};
  \node at (1.35,3.9)  {\Large $\varepsilon$};
  \node at (3.75,4.6)  {\Large $\tau_\delta\varepsilon$};
  \node[right] at (7.5,4.15) {\Large $\tau_\gamma\varepsilon$};
  \node[right] at (9.2,4.65) {\Large $\tau_{\tau_\gamma\delta}\,\tau_\gamma\varepsilon$};
\end{tikzpicture}
        }
        \caption{Depiction of rule (e).}
        \label{fig:e}
        \end{figure}
        
\begin{Lm}\label[Lm]{lm:vertex_acyclic}
    Let $p$ be any vertex of $P$. Consider the directed graph $G(p)$ with vertex set $E(p)$ and directed edges given by the $\midnarrow$ relation. Then the graph $G(p)$ has no directed cycles.
\end{Lm}
\begin{proof}
    Suppose for a contradiction that
    \[
        {\varepsilon}_1 \midnarrow {\varepsilon}_2 \midnarrow \dots \midnarrow {\varepsilon}_m
        \midnarrow {\varepsilon}_1
    \]
    is a directed cycle.
    Note that 
    $m \geq 3$ by \Cref{lm:rules}\ref{it:either_or}. We may assume that this cycle has minimal length among all cycles in all graphs $G(q)$ as $q$ ranges over the vertices of $P$. 
    If we had ${\varepsilon}_a \sim {\varepsilon}_b$ for some $1 \leq a < b \leq m$, then \Cref{lm:rules}\ref{it:equiv} would give us ${\varepsilon}_{b-1} \midnarrow \varepsilon_a$, producing a shorter cycle. It follows that ${\varepsilon}_1, \dots, {\varepsilon}_m$ are all of different type. 
    Furthermore, we must have ${\varepsilon}_a \midarrow \varepsilon_b$ whenever $b \not\equiv a, a+1 \pmod m$ since ${\varepsilon}_a \midnarrow {\varepsilon}_b$ would create a shorter cycle. 
    
    Let $p' \neq p$ be the other endpoint of the edge ${\varepsilon}_1$. Then 
    $\tau_{{\varepsilon}_1} {\varepsilon}_a$ for $1 < a \leq m$ are edges of distinct type in $E(p')$. Note that 
    $\tau_{{\varepsilon}_1} {\varepsilon}_a \midarrow \varepsilon_1$ whenever
    $1 < a < m$ by \Cref{lm:rules}\ref{it:symmetric}.
    We claim that $\tau_{{\varepsilon}_1} {\varepsilon}_a \midnarrow \tau_{{\varepsilon}_1} {\varepsilon}_{a + 1}$ if $1 < a < m$. If instead $\tau_{{\varepsilon}_1} {\varepsilon}_a \midarrow \tau_{{\varepsilon}_1} {\varepsilon}_{a + 1}$ then applying \Cref{lm:rules}\ref{it:cube1} at the vertex $p'$ gives 
    $\tau_{{\varepsilon}_1} \tau_{{\varepsilon}_1} {\varepsilon}_a \midarrow \tau_{{\varepsilon}_1} \tau_{{\varepsilon}_1} {\varepsilon}_{a + 1}$, i.e.\ 
    ${\varepsilon}_a \midarrow \varepsilon_{a + 1}$, which is a contradiction.

    Finally, since ${\varepsilon}_m \midarrow \varepsilon_2$,
    ${\varepsilon}_m \midnarrow {\varepsilon}_1$ and ${\varepsilon}_1 \midnarrow {\varepsilon}_2$, \Cref{lm:rules}\ref{it:cube2} gives 
    $\tau_{{\varepsilon}_1} {\varepsilon}_m \midnarrow \tau_{{\varepsilon}_1} {\varepsilon}_2$. This means that $G(p')$ contains the cycle
    \[
        \tau_{{\varepsilon}_1} {\varepsilon}_2 \midnarrow \tau_{{\varepsilon}_1} {\varepsilon}_3 \midnarrow \dots \midnarrow \tau_{{\varepsilon}_1} {\varepsilon}_m
        \midnarrow \tau_{{\varepsilon}_1} {\varepsilon}_2
    \]
    of length $m-1$. Since $m$ was assumed minimal, this is the desired contradiction.
\end{proof}

\begin{Lm}\label[Lm]{lm:type_acyclic}
    Fix a vertex $p$ of $P$.
    Then there exists a permutation $\pi: [r] \to [r]$ such that 
    if $\pi(i) < \pi(j)$ then
    $\varepsilon \midarrow \delta$ for all ${\varepsilon} \in E(p)$ of type $i$ and all $\delta \in E(p)$ of type $j$.
\end{Lm}
\begin{proof}
    We will (momentarily) extend the
    $\midnarrow$ notation to types. 
    Namely, for distinct $i, j \in [r]$ we write 
    $i \midnarrow j$ if $\varepsilon \midnarrow f$ for some ${\varepsilon} \in E(p)$ of type $i$ and some $f \in E(p)$ of type $j$. The claim reduces to the assertion that the $\midnarrow$ relation on types extends to a strict total order on $[r]$. This holds if and only if the graph on $[r]$ with edges given by $\midnarrow$ is directed acyclic. Suppose we had a directed cycle in this graph:
    \[
        i_0 \midnarrow i_1 \midnarrow i_2 \midnarrow \dots \midnarrow i_m = i_0
    \]
    For each $0 \leq a < m$, we can find edges $\varepsilon_a \in E(p)$ of type $i_a$ and 
    $\delta_a \in E(p)$ of type $i_{a + 1}$ such that 
    $\varepsilon_a \midnarrow \delta_a$. We let 
    $\varepsilon_m \deq \varepsilon_0$, $\delta_m \deq \delta_0$.
    For all $0 \leq a < m$, since $\delta_a \sim \varepsilon_{a + 1}$ we have 
    $\varepsilon_a \midnarrow {\varepsilon_{a+1}}$ by the contrapositive of \Cref{lm:rules}\ref{it:equiv}. This produces the directed cycle 
    \[
        \varepsilon_{0} \midnarrow \varepsilon_{1} \midnarrow \varepsilon_{2} \midnarrow \dots \midnarrow \varepsilon_{m} = \varepsilon_{0}
    \]
    which contradicts \Cref{lm:vertex_acyclic}.
\end{proof}

\begin{proof}[Proof of \Cref{prop:parallel_main}]
    Let $p$ be any vertex of $P$. It follows from \Cref{lm:type_acyclic} that there exists some type $i \in [r]$ such that 
    $\varepsilon \midarrow \delta$ for all $\varepsilon, \delta \in E(p)$
    with $\delta$ of type $i$ and $\varepsilon$ not of type $i$. We claim that this value of $i$ is suitable.

    To see this, let $\xi$ be the vertex of $\Delta_i$ such that $p \in P_{\xi}$
    and let $\zeta \neq \xi$ be any other vertex of $\Delta_i$. There is a unique edge $\varepsilon$ in $P$ one of whose endpoints is $p$ and the other of which lies in $P_{\zeta}$. Let $q$ be this other endpoint of $\varepsilon$.
    Among the edges adjacent to $p$ in $P$, those lying in $P_{\xi}$ are precisely the ones \emph{not} of type $i$. The same holds at $q$ as regards $P_{\zeta}$. Moreover, the correspondence $\delta \leftrightsquigarrow \tau_{\varepsilon} \delta$ is a bijection between the edges adjacent to $p$ in $P_{\xi}$
    and the edges adjacent to $q$ in $P_{\zeta}$.
    In each case, the assumption that $\varepsilon$ moves $\delta$ means that $\delta$ and $\tau_{\varepsilon} \delta$ are parallel.
    Since the edges in $E(p)$ not of type $i$ span $\Aff P_{\xi}$---and since the same holds at $q$ as regards $P_{\zeta}$---it follows that $P_{\zeta}$ is parallel to $P_{\xi}$.
\end{proof}

\begin{proof}[Proof of \Cref{thm:2faces}\ref{thm:combprodsimp}]
We may assume that $P$ is a full-dimensional polytope in $\R^n$.
Choose a combinatorial isomorphism $P \simeq \Delta_1\times \dots \times \Delta_r$ as above.
Let $p_0$ be any vertex of $P$ and let $i$ be as in  \Cref{prop:parallel_main}. Without loss of generality, we may assume that $i = 1$.
Let $p_1, \dots, p_m$ be those vertices of $P$ adjacent to $p_0$ along an edge of type $1$ and let $p_{m+1}, \dots, p_n$ be the remaining neighbors of $p_0$. After an affine transformation, we may assume that $p_0$ lies at the origin and that
$p_1, \dots, p_n$ is the standard basis of $\R^{n}$. Let ${\xi}_0, \dots, {\xi}_m$ be the vertices of $\Delta_1$ such that $p_j \in P_{{\xi}_j}$ for $j = 0, \dots, m$. The polytope $P_{{\xi}_0}$ lies in the subspace $L \deq \{0\}^m \times \R^{n - m}$. For each $j = 1, \dots, m$, the polytope $P_{{\xi}_j}$ is parallel to $P_{{\xi}_0}$ and contains $p_j$, so lies in the affine subspace $L + p_j$. Since all vertices of $P$ lie in one of $P_{{\xi}_0}, \dots, P_{{\xi}_m}$, it follows that $P$ is the convex hull of these, and so is equal to the Cayley sum $\Cay(P_{{\xi}_0}, \dots, P_{{\xi}_m})$, up to permuting coordinates.

We check that this Cayley sum is coherent.
By symmetry, it suffices to check that $P_{{\xi}_0}$ and $P_{{\xi}_1}$ are Minkowski equivalent. 
Both polytopes are canonically combinatorially isomorphic to 
$\Delta_{2} \times \dots \times \Delta_r$, so are canonically combinatorially isomorphic to one another. We show that this isomorphism satisfies the criterion in \Cref{fact:pos_parallel}, which will complete the proof.
Let 
$pq$ be an edge in $P_{{\xi}_0}$. Let $p'q'$ be the corresponding edge in $P_{{\xi}_1}$.
Then $p, q, q', p'$ are the vertices in cyclic order of a quadrilateral face of $P$. Since $pq$ and $p'q'$ are the intersections of the affine plane spanned by this quadrilateral with the parallel subspaces $\Aff P_{{\xi}_0}$ and $\Aff P_{{\xi}_1}$, they are themselves parallel. Since both edges lie on the same side of the edge $pp'$, they are \emph{positively} parallel.

We now have that $P$ is the coherent Cayley sum of $P_{{\xi}_0}, \dots, P_{{\xi}_m}$. By induction on dimension, we may assume that $P_{{\xi}_0}$ is affinely isomorphic to a Cayley tower. Hence, acting by an affine isomorphism in the subspace spanned by $p_{m+1}, \dots, p_n$, we can place ourselves in the situation where $P_{{\xi}_0}$ is a Cayley tower. The Minkowski equivalent polytopes $P_{{\xi}_1}, \dots, P_{{\xi}_m}$ are then also Cayley towers, up to translation. We conclude that $P$ is a Cayley tower (up to affine isomorphism). 
\end{proof}

\begin{proof}[Proof of \Cref{thm:struct_thm_smooth}]
   The proof strategy follows closely that of \Cref{thm:2faces}\ref{thm:combprodsimp}. We assume the same notation and highlight the additional subtleties.
   Again, we may assume that $P$ is full-dimensional and that a combinatorial isomorphism 
   $P \simeq \Delta_1 \times \dots \times \Delta_r$ is given. Again, we may assume that the faces 
   $P_{\xi}$ of $P$ for ${\xi} \in \Delta_1$ are all parallel.
   Again, we let $p_0$ be a vertex and list its neighbors as 
   $p_1, \dots, p_n$ in such a way that those in the initial segment
   $p_1,\dots,p_m$ are precisely its neighbors of type 1. Since $P$ is smooth at $p_0$, we may arrange by a unimodular transformation that $p_0$ be the origin and that $p_i$ lie on the positive $i$-th coordinate axis for $i = 1,\dots,n$.
   The simplex $\Delta = \Conv(p_0, \dots, p_m)$ is a face of $P$, so is smooth. By \Cref{fact:smooth_simplex}, there is a positive integer $k$ such that $p_i = k e_i$ for $1 \leq i \leq m$.
   We conclude that 
   $P = \Cay_k(P_{{\xi}_0}, \dots, P_{{\xi}_m})$, up to permuting coordinates.
   
   Just as in the previous proof, one shows that $P_{{\xi}_0}, \dots, P_{{\xi}_m}$ are Minkowski equivalent.
   By induction on dimension, we may assume that $P_{{\xi}_0}$ is lattice isomorphic to an integral Cayley tower. Hence, acting by a lattice isomorphism in the subspace spanned by $p_{m+1}, \dots, p_n$, we can place ourselves in the situation where $P_{{\xi}_0}$ is an integral Cayley tower. 
   The Minkowski equivalent lattice polytopes $P_{{\xi}_1}, \dots, P_{{\xi}_m}$ are therefore also integral Cayley towers, up to translation. We conclude that $P$ is an integral Cayley tower (up to lattice isomorphism). 
\end{proof}

\section{Weak Minkowski summands} \label{sec:WMS_of_SCPS}

\subsection{Chimney polytopes}
In what follows, we use the term \define{affine functional} for the sum $f$ of a linear functional $\lambda$ on $\R^n$ and a constant $c \in \R$.
We call $\lambda$ the \define{linear component} of $f$ and $c$ the \define{constant component} of $f$.
We say that $f$ is \define{integral} if it takes integer values on the lattice $\mathbb{Z}^n \subset \mathbb{R}^n$, which implies in particular that $c$ is an integer.

\begin{Df}\label[Df]{df:chimney}
For a polytope $Q$ in $\R^n$ and affine functionals $f, g$ on $\mathbb{R}^n$ such that $f \geq g$ holds on $Q$, we define\footnote{The choice to put the new $\R$ factor at the start conflicts with the convention in \cite{haase21}. However, it makes the statement of \Cref{lm:CCCC} much cleaner.}
\[
    \Ch{f}{g}{Q} \deq \{(t, x) \in \R \times \R^n : x \in Q, g(x) \leq t \leq f(x) \}.
\]
We call $\Ch{f}{g}{Q}$ a \define{chimney polytope} over $Q$.
We say that the chimney $\Ch{f}{g}{Q}$ is \define{strict} if the strict inequality $f > g$ holds on $Q$. 
% When we omit one of $f$ or $g$,\footnote{It will invariably be $g$ in practice.} the intended meaning is that that functional is 0.
When $Q$ is a lattice polytope and both $f$ and $g$ are integral, we call $\Ch{f}{g}{Q}$ an \define{integral chimney} over $Q$.
\end{Df}
\noindent
$\Ch{f}{g}{Q}$ may alternatively be defined as the polytope with vertices $(g(q), q)$ and $( f(q), q)$
for vertices $q$ of $Q$.
The term \emph{chimney polytope} was coined in \cite{haase21}*{\textsection~2.2.1}, 
where it is however used to mean what we have termed \emph{integral chimney} polytopes.
We will use the symbol $x_0$ for the new first coordinate in $\R \times \R^n$ and $e_0$ for the corresponding standard basis vector.
In all that follows, unless explicitly stated otherwise, any use of the notation $\Ch{f}{g}{Q}$ includes within it the assumption that $Q$ is a polytope, that $f$ and $g$ are affine functionals on its ambient space, and that the inequality $f \geq g$ holds on $Q$.

Let $P = \Ch{f+b}{g+c}{Q}$ be a strict chimney polytope where $f$ and $g$ are linear functionals and $b,c$ are constants. The polytope $P$ is combinatorially isomorphic to $[0, 1] \times Q$. An explicit combinatorial isomorphism sends the vertex $(g(q), q)$ to $(0, q)$ and 
$(f(q), q)$ to $(1, q)$ for each vertex $q$ of $Q$.
When $f > g$ are \emph{constants}, the chimney 
$\Ch{f}{g}{Q}$ is the prism $[g, f] \times Q$.
In general, the normal fan of $\Ch{f}{g}{Q}$ is obtained by ``warping'' that of the prism.

\begin{Obs}\label[Obs]{obs:chimney_fans}
    Let $Q$ be a full-dimensional polytope in $\R^n$  and $f, g: \R^n \to \R$ affine functionals satisfying $f > g$ on $Q$. 
    Write $f = \inner{u,-} + b$, $g = \inner{v,-} + c$ for 
    $u, v \in \R^n$ and $b,c \in \R$.
    Let $P = \Ch{f}{g}{Q}$.

    The normal fan $\Sigma_Q$ naturally lives in the 
    subspace $\R^n$ of $\R \times \R^n$.
    In this sense, the rays of $\Sigma_P$ are precisely the rays of $\Sigma_Q$ along with two further rays generated by vectors
    $\rho_0, \rho_1 \in \R \times \R^n$ where 
    \[
        \rho_0 = (-1, v), \qquad 
        \rho_1 = (1, -u).
    \]
    The cones of $\Sigma_P$ are given by the positive hulls of the sets of rays of the form $C\cup S$ where $C$ is the set of rays of a cone in $\Sigma_Q$ and $S$ is a proper subset of $\{\rho_0, \rho_1\}.$
    The support function $h_{P}$ is given on the rays of $\Sigma_P$ by
    \[
        h_P(\rho) = \begin{cases}
            h_Q(\rho) & \text{if $\rho$ is a ray of $\Sigma_Q$,} \\
            -c & \text{if } \rho = \rho_0,\\
            b & \text{if } \rho = \rho_1.
        \end{cases} 
    \]
    An example is depicted in \Cref{fig:chimney}.
\end{Obs}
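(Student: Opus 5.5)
The observation makes three claims: the rays of $\Sigma_P$ are those of $\Sigma_Q$ together with $\rho_0$ and $\rho_1$; the cones are the positive hulls of $C\cup S$; and $h_P$ takes the stated values on these rays. I will verify them by writing down an explicit facet description of $P$ and computing the normal cone of every face through the combinatorial isomorphism $P\simeq[0,1]\times Q$ described just before the statement.

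The first step is the inequality description. Let $Q=\{x:\inner{a_j,x}\le h_Q(a_j)\}$, where the $a_j$ are the primitive (or any) generators of the rays of $\Sigma_Q$. Then
\[
P=\{(t,x): \inner{a_j,x}\le h_Q(a_j)\ \text{for all } j,\ \ -t+\inner{v,x}\le -c,\ \ t-\inner{u,x}\le b\}.
\]
The last two inequalities are $\inner{\rho_0,(t,x)}\le -c$ and $\inner{\rho_1,(t,x)}\le b$, which are just $g(x)\le t\le f(x)$.

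Each inequality is facet-defining, since $f>g$ on $Q$ and $Q$ is full-dimensional, so $P$ is full-dimensional:
\begin{itemize}
\item the bottom face $\{t=g(x)\}$ is a graph over $Q$;
\item the top face $\{t=f(x)\}$ is likewise a graph over $Q$;
\item each side face is the chimney over a facet of $Q$, of dimension $\dim Q$.
\end{itemize}
The outer normals of these facets are $\rho_0$, $\rho_1$ and the $a_j$ (viewed in $0\times\R^n$). This gives the rays, and also the values of $h_P$ on them: the value at a facet normal is the right-hand side of its inequality, since equality is attained on that facet.

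For the cones, I will use the explicit combinatorial isomorphism. A face of $P$ corresponds to $F\times G$ with $F$ a face of $[0,1]$ and $G$ a face of $Q$. The facets containing it are those containing $\{0\}\times G$, $\{1\}\times G$ or $[0,1]\times G$, namely the side facets over the facets of $Q$ containing $G$, together with:
\begin{itemize}
\item the bottom facet if $F=\{0\}$;
\item the top facet if $F=\{1\}$;
\item neither if $F=[0,1]$.
\end{itemize}
Since $P$ is full-dimensional, the normal cone of a face is the positive hull of the outer normals of the facets containing it. This gives exactly $\operatorname{pos}(C\cup S)$, where $C$ is the ray set of $\sigma^Q_G$ and $S$ ranges over the proper subsets $\varnothing$, $\{\rho_0\}$ and $\{\rho_1\}$.

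The one point needing care is that the stated combinatorial isomorphism really does match faces as claimed, in particular that the facets of $P$ containing $(g(q),q)$ are the bottom facet and the side facets over facets of $Q$ containing $q$. This follows from strictness: at $(g(q),q)$ the top inequality is strict because $f(q)>g(q)$. Beyond this, checking that these facet incidences determine the normal fan is routine.
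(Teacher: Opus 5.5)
Your proof is correct and follows essentially the route the paper has in mind: the paper states this as an observation without a written proof, relying on the combinatorial isomorphism $P \simeq [0,1]\times Q$ given just before it and on the remark that the chimney's normal fan is a ``warped'' prism fan. Your explicit facet description, with outer normals $\rho_0$, $\rho_1$ and $(0,a_j)$, together with the vertex--facet incidence check that uses $f>g$ on $Q$, is exactly the verification that fills this in.
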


\begin{figure}[ht]
    \begin{subfigure}[t]{0.22\textwidth}
        % \centering
        \resizebox{\linewidth}{!}{
            \begin{tikzpicture}[>=Stealth, line width=1.4pt, scale=1.6]

\definecolor{sketchpurple}{HTML}{9C27B0}
\definecolor{sketchorange}{HTML}{E8973A}
\definecolor{sketchgreen}{HTML}{4CAF50}
\definecolor{sketchblue}{HTML}{3B6FD4}

  % --- Polygon P ---
  % Left edge vertical and short, right edge vertical and tall,
  % top edge slanting up to the right, bottom edge slanting slightly down.
  \coordinate (A) at (0,0);       % bottom-left
  \coordinate (B) at (0,1.1);     % top-left
  \coordinate (C) at (1.7,2.2);   % top-right
  \coordinate (D) at (1.7,-0.35); % bottom-right
  \draw[sketchpurple] (A) -- (B) -- (C) -- (D) -- cycle;
  \node at (-0.55,0.55) {\Large $P$};

  % --- Outward normal u of the top edge ---
  % Top edge direction (1.7,1.1); outward normal ~ (-1.1,1.7), angle ~122.9 deg.
  \coordinate (mtop) at ($(B)!0.5!(C)$);
  \draw[sketchorange,->] ($(mtop)+(122.9:0.12)$) -- ($(mtop)+(122.9:0.75)$);
  \node[sketchorange] at ($(mtop)+(122.9:1.0)+(-0.15,0)$) {\Large $\rho_1$};

  % --- Outward normal v of the bottom edge ---
  % Bottom edge direction (1.7,-0.35); outward normal ~ (-0.35,-1.7), angle ~-101.6 deg.
  \coordinate (mbot) at ($(A)!0.5!(D)$);
  \draw[sketchgreen,->] ($(mbot)+(-101.6:0.12)$) -- ($(mbot)+(-101.6:0.85)$);
  \node[sketchgreen] at ($(mbot)+(-101.6:1.15)$) {\Large $\rho_0$};

  % --- Segment Q ---
  \draw[sketchblue] (0,-2) -- (1.5,-2);
  \node at (-0.55,-2) {\Large $Q$};
\end{tikzpicture}
        }
    \end{subfigure}
    \hspace{0.1\textwidth}
    \begin{subfigure}[t]{0.22\textwidth}
        % \centering
        \resizebox{\linewidth}{!}{
            \definecolor{sketchcyan}{HTML}{4DC9E6}
\definecolor{sketchorange}{HTML}{E8973A}
\definecolor{sketchgreen}{HTML}{4CAF50}

\begin{tikzpicture}[>=Stealth, line width=1.4pt]
  % --- N(P) ---
  \node at (-2.8,0) {\Large $\Sigma_P$};
  \draw[sketchcyan,<->] (-1.1,0) -- (1.1,0);
  \draw[sketchorange,->] (0,0) -- (122.9:1.0);
  \node[sketchorange] at ($(122.9:1.3)+(-0.15,0)$) {\Large $\rho_1$};
  \draw[sketchgreen,->] (0,0) -- (-101.6:1.1);
  \node[sketchgreen] at (-101.6:1.4) {\Large $\rho_0$};
  \fill (0,0) circle (1.8pt);

  % --- N(Q) ---
  \node at (-2.8,-6) {\Large $\Sigma_Q$};
  \draw[sketchcyan,<->] (-1.1,-6) -- (1.1,-6);
  \fill (0,-6) circle (1.8pt);
\end{tikzpicture}
        }
    \end{subfigure}
    \caption{An example of a chimney $P = \Chim_g^f(Q) \sbs \R \times \R$ with base polytope $Q \sbs \R$, and their normal fans.}
    \label{fig:chimney}
\end{figure}

\begin{Lm}\label[Lm]{lm:MEQchimney}
    Let $P$ be a polytope in $\mathbb{R}^n$.
    The polytopes in $\R \times \mathbb{R}^{n}$ Minkowski equivalent to a given strict chimney polytope $\Ch{f}{g}{P}$ over $P$ are precisely the strict chimney polytopes $\Ch{f+b}{g+c}{Q}$ in which $Q$ is Minkowski equivalent to $P$ and $b,c \in \mathbb{R}$ are chosen so that $f+b > g + c$ on $Q$.
\end{Lm}
\begin{proof}
    The fact that such polytopes are Minkowski equivalent to $\Ch{f}{g}{P}$ follows from the fact that the description of the normal fan in \Cref{obs:chimney_fans} depends only on the normal fan of the base polytope and the linear components of the two affine functionals.
    
    For the converse, suppose we are given a polytope $\widetilde{Q}$ Minkowski equivalent to $\Ch{f}{g}{P}$.
    The projection of $\widetilde{Q}$ to the second factor of $\R \times \R^n$ is a polytope $Q$ in $\R^n$ Minkowski equivalent to $P$ by \Cref{prop:MEQaffine}. 
    From the description of (the rays of) the normal fan in \Cref{obs:chimney_fans}, one sees that $\widetilde{Q}$ is the intersection with 
    $\mathbb{R} \times Q$ of the halfspaces 
    $\{x_0 \leq f(x_1, \dots, x_n) + b\}$ and $\{x_0 \geq g(x_1, \dots, x_n) + c\}$ for some $b, c \in \mathbb{R}$.
    To have the projection to $\R^n$ be $Q$, we must have 
    $f + b \geq g + c$ on $Q$. 
    If equality were attained at any point in $Q$, then the upper and lower facets of $\widetilde{Q}$ would intersect. This would be detected in the normal fan by the corresponding rays lying in a common cone. This, however, is false. Hence $\widetilde{Q} = \Ch{f+b}{g+c}{Q}$ is a strict chimney.
\end{proof}
\noindent
We can similarly characterize weak Minkowski summands of strict chimneys.
\begin{Lm}\label[Lm]{lm:chimney_summand}
    Let $P$ be a polytope in $\R^n$.
    The weak Minkowski summands in $\R\times \mathbb{R}^{n}$
    of a given \emph{strict} chimney polytope $\widetilde{P} \deq \Ch{f}{g}{P}$ over $P$ are precisely the polytopes $\widetilde{Q} = \Ch{f+b}{g+c}{Q}$ where $Q$ is a weak Minkowski summand of $P$, and 
    $b, c \in \mathbb{R}$ are chosen so that $f + b \geq g + c$ holds on $Q$.
\end{Lm}
\begin{proof}
    After translating $P$, we may assume that it contains the origin.
    To see that each polytope described in the lemma statement is indeed a weak Minkowski summand of $\widetilde{P}$, let $\epsilon_1, \epsilon_2, \dots$ be a decreasing sequence of positive reals with limit $0$.
    Choose $\delta_i$ tending to $0$ and sufficiently small so that $f+b+\epsilon_i > g + c$ holds on $Q + \delta_i P$.
    Then
    $\widetilde{Q} = \bigcap_{i} \Ch{f+b + \epsilon_i}{g+c}{Q + \delta_i P}$. Each polytope in the intersection is Minkowski equivalent to $\widetilde{P}$ by \Cref{lm:MEQchimney} and \Cref{prop:nef}. Hence $\widetilde{Q}$ is a weak Minkowski summand of $\widetilde{P}$ by \Cref{lm:Minkowski_limit}.    
    
    It remains to show that all weak Minkowski summands have this form.
    Suppose we are given a weak Minkowski summand $\widetilde{Q}$ of $\Ch{f}{g}{P}$.
    The projection of $\widetilde{Q}$ to the second factor of $\R \times \R^n$ is a weak Minkowski summand $Q$ of $P$ by \Cref{prop:WMSaffine}. The description of (the rays of) the normal fan in \Cref{obs:chimney_fans} shows that $\widetilde{Q}$ is the intersection with 
    $\mathbb{R} \times Q$ of the halfspaces 
    $\{x_0 \leq f(x_1, \dots, x_n) + b\}$ and $\{x_0 \geq g(x_1, \dots, x_n) + c\}$ for some $b, c \in \mathbb{R}$.
    To have the projection to $\R^n$ be $Q$, we must have 
    $f + b \geq g + c$ on $Q$. Hence $\widetilde{Q} = \Ch{f+b}{g+c}{Q}$.
\end{proof}

\begin{Cor}\label[Cor]{cor:integral_chimney_summand}
    Let $P$ be a lattice polytope in $\R^n$ and $f,g: \R^n \to \R$ \emph{integral} affine functionals satisfying $f > g$ on $P$.
    The lattice weak Minkowski summands of the strict chimney $\Ch{f}{g}{P}$ are precisely the polytopes $\Ch{f+b}{g+c}{Q}$ where $Q$ is a lattice weak Minkowski summand of $P$ and 
    $b,c \in \mathbb{Z}$ are such that $f+b\geq g+c$ on $Q$.
\end{Cor}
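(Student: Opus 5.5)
The plan is to deduce the corollary from \Cref{lm:chimney_summand}, which already describes all weak Minkowski summands of the strict chimney $\Ch{f}{g}{P}$. What remains is to check that lattice-ness of the summand corresponds exactly to lattice-ness of $Q$ together with integrality of $b$ and $c$.

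The first direction is that every polytope of the stated form is a lattice weak Minkowski summand. Take $Q$ a lattice weak Minkowski summand of $P$, and $b,c\in\Z$ with $f+b\ge g+c$ on $Q$. By \Cref{lm:chimney_summand}, $\Ch{f+b}{g+c}{Q}$ is a weak Minkowski summand. Its vertices lie among the points $(g(q)+c,q)$ and $(f(q)+b,q)$ for vertices $q$ of $Q$. These are lattice points, because $q\in\Z^n$ and $f,g$ are integral, so $f(q),g(q)\in\Z$.

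For the converse, let $\widetilde Q$ be a lattice weak Minkowski summand. By \Cref{lm:chimney_summand}, $\widetilde Q=\Ch{f+b}{g+c}{Q}$ with $Q$ a weak Minkowski summand of $P$ and $b,c\in\R$. Since $Q$ is the image of $\widetilde Q$ under the projection $\R\times\R^n\to\R^n$, which maps lattice points to lattice points, $Q$ is a lattice polytope. To see that $b\in\Z$, pick any vertex $q$ of $Q$. I claim $(f(q)+b,q)$ is a vertex of $\widetilde Q$. Indeed, the fibre of $\widetilde Q$ over $q$ is the segment $[g(q)+c,\,f(q)+b]\times\{q\}$, possibly a single point. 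This fibre is a face of $\widetilde Q$: if $\ell$ is a functional exposing $q$ in $Q$, then $\ell\circ\mathrm{pr}$ exposes the fibre. The endpoints of a face that is a segment or a point are vertices of $\widetilde Q$, hence lattice points. Therefore $f(q)+b\in\Z$, and since $f(q)\in\Z$ we get $b\in\Z$. The same argument applied to the lower endpoint gives $c\in\Z$.

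No step is a real obstacle. The only point needing care is that the endpoints of the fibre over a vertex are genuinely vertices of $\widetilde Q$, which is exactly the face argument above. The degenerate case where $f+b=g+c$ at $q$ is harmless, since then the fibre is a single vertex that serves as both endpoints.
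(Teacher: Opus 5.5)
Your proposal is correct and follows the paper's proof. Like the paper, you reduce to \Cref{lm:chimney_summand} and then use the fact that, over each vertex $q$ of $Q$, the point $(f(q)+b,q)$ (and likewise $(g(q)+c,q)$) is a vertex of the summand; since $f,g$ are integral, lattice-ness forces $b,c\in\mathbb{Z}$. The differences are minor: you justify the vertex claim by exhibiting the fibre over $q$ as an exposed face, which the paper leaves implicit, and you obtain lattice-ness of $Q$ from the projection rather than directly from those vertices.
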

\begin{proof}
    Given \Cref{lm:chimney_summand}, it remains only to check that
    $\widetilde{Q} \deq \Ch{f+b}{g+c}{Q}$ is a lattice polytope if and only if $Q$
    is lattice and both $b$ and $c$ are integers. Sufficiency is clear. We prove necessity.
    Let $q$ be a vertex of $Q$. Since $(f(q) + b, q)$ is a vertex of $\widetilde{Q}$ and $f$ is integral, $q$ must be a lattice point and $b$ an integer. For similar reasons, $c$ must also be an integer.
\end{proof}
\noindent
In contrast to \Cref{obs:integral=lattice}, a chimney polytope which is a lattice polytope need not be an integral chimney. Instead, we will use the following technical criterion to ensure that our chimneys are integral (when they need to be). 
\begin{Lm}\label[Lm]{lm:smooth_chimney}
Let $P$ be a smooth lattice polytope in $\mathbb{R}^{n+1}$ and suppose that $P$ is affinely isomorphic to 
$\Ch{f}{g}{Q}$ for some polytope $Q$ in $\mathbb{R}^n$ and affine functionals $f, g$ (with no integrality assumptions on these).
Then $P$ is lattice isomorphic to $\Ch{f'}{0}{Q'}$ for some smooth lattice polytope $Q'$ affinely isomorphic to $Q$ in $\mathbb{R}^n$ and $f'$ an integral affine functional.
\end{Lm} 
\begin{proof}
    There is no loss of generality in assuming that $P$, and therefore also $Q$, is full-dimensional.
    Let $P^{\up}$ and $P^{\down}$ be the faces of $P$ that correspond under the affine isomorphism to the top and bottom facets of $\Ch{f}{g}{Q}$, respectively.
    Acting by a lattice-preserving affine automorphism of $\mathbb{R}^{n+1}$, we may assume that $P^{\down}$ lies in $\{0\} \times \mathbb{R}^n$, which we identify with $\mathbb{R}^n$. Let $q$ be a vertex of $Q$ such that the copies $q^{\up}$ and $q^{\down}$ of $q$ in $P^{\up}$ and $P^{\down}$ are distinct. Let $v$ be the primitive edge direction from $q^{\down}$ to $q^{\up}$. Since $P$ is smooth, we can find a lattice-preserving affine automorphism fixing  $P^{\down}$ and sending $v$ to the basis element $e_{0}$; we may therefore assume $v = e_{0}$.

    The hypothesis on $P$ now implies that 
    $P = \Ch{f_0}{0}{P^{\down}}$ for some affine functional $f_0$. 
    Since $P$ is smooth, the lattice 
    $\mathbb{Z}^{n+1} \cap \Aff P^{\up}$ is a complement in $\mathbb{Z}^{n+1}$
    to the sublattice $\mathbb{Z} \times \{0\}^{n}$.
    Hence the projection map $\mathbb{R}^{n+1} \to \mathbb{R}^n$ maps the lattice points of $P^{\up}$ bijectively onto the lattice points of $P^{\down}$.
    Since $P^{\down}$ is smooth 
    and $P^{\up}$ is the image of $P^{\down}$ under the linear map 
    $x \mapsto (f_0(x), x)$,  $f_0$ must be integral on the lattice $\mathbb{Z}^{n} \cap \Aff P^{\down}$. We can therefore find an integral functional $f'$ on $\mathbb{R}^n$ which agrees with 
    $f_0$ on $P^{\down}$. 
    Then $P = \Ch{f'}{0}{P^{\down}}$.
    Since $P^{\down}$ is a face of the smooth polytope $P$, it is itself smooth. So we may take $Q' = P^{\down}$.
\end{proof}
\noindent
A partial converse to \Cref{lm:smooth_chimney} is given in the following lemma.
\begin{Lm}\label[Lm]{lm:strict=smooth}
    A strict integral chimney over a smooth polytope is smooth.
\end{Lm}
\begin{proof}
    Let $Q = \Ch{f}{g}{P}$ be a strict integral chimney over a smooth polytope $P$ in $\mathbb{R}^n$. We show that $Q$ is smooth at each of its vertices. By symmetry, it suffices to do so at vertices of the form $(g(p), p) \in \mathbb{R} \times \mathbb{R}^n$ for vertices $p$ of $P$. Applying the lattice isomorphism $(t, x) \mapsto (t - g(x), x)$, we may assume that $g = 0$. The edges in $Q$ incident to the vertex $(0, p)$  are those of the smooth polytope $\{0\} \times P$ along with the additional edge to $(f(p), p)$. The primitive edge directions for the first group form a basis for the sublattice $\{0\} \times \mathbb{Z}^n$. The primitive vector for the last edge is $(1,0)$. Together, they form a basis for the lattice $\mathbb{Z}^{n+1}$.
\end{proof}

\begin{Df}\label[Df]{df:iterated_chimney}
A polytope $P$ is said to be an \define{iterated chimney polytope}
if there exists a sequence of polytopes 
$P_0, P_1, \dots, P_m = P$ such that 
$P_0$ is a single point and $P_{i+1}$ is a chimney polytope over $P_i$ for each $i$.
If $P$ is a lattice polytope and if we may in each case choose $P_{i+1}$ to be an \emph{integral} chimney polytope over $P_{i}$, then we say that $P$ is \define{Nakajima}.\footnote{In lieu of the more cumbersome ``iterated integral chimney polytope''.}
\end{Df}
\noindent
As mentioned in the introduction, the nomenclature reflects the origin of these polytopes in the work of H.\ Nakajima \cite{Nakajima} classifying complete intersection affine toric varieties. Iterated chimneys which are lattice polytopes need not be Nakajima. However,
an easy induction based on \Cref{lm:smooth_chimney} proves:
\begin{Cor}\label[Cor]{cor:smooth=>Nakajima}
    Any smooth lattice polytope affinely isomorphic to an iterated chimney is lattice isomorphic to a Nakajima polytope.
\end{Cor}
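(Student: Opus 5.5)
We argue by induction on the length $m$ of a chimney sequence $P_0, P_1, \dots, P_m$ as in \Cref{df:iterated_chimney}, with $P_m$ affinely isomorphic to the given smooth lattice polytope $P$. It is convenient to prove a slightly stronger statement. If $P$ is a smooth lattice polytope affinely isomorphic to an iterated chimney of length $m$, then $P$ is lattice isomorphic to a Nakajima polytope, and that Nakajima polytope is itself smooth and built by a chimney sequence of length $m$.

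The base case $m=0$ is immediate. Then $P$ is a single point, which is lattice isomorphic to the origin, and the origin is Nakajima.

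For the inductive step, write the last stage as $P_m = \Ch{f}{g}{P_{m-1}}$, so that $P$ is affinely isomorphic to $\Ch{f}{g}{P_{m-1}}$. Since $P_{m-1}$ lies in some $\R^{N}$ and the notions of isomorphism extend across ambient dimensions, we may assume $P$ sits in $\R^{n+1}$ with $P_{m-1}\subseteq\R^n$. Now \Cref{lm:smooth_chimney} applies directly. It shows that $P$ is lattice isomorphic to $\Ch{f'}{0}{Q'}$, where $f'$ is an integral affine functional and $Q'$ is a smooth lattice polytope affinely isomorphic to $P_{m-1}$. Since $P_{m-1}$ is an iterated chimney of length $m-1$, so is $Q'$, up to affine isomorphism. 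The inductive hypothesis then gives a lattice isomorphism $\phi:\R^n\to\R^n$ with $\phi(Q')=N$ Nakajima.

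Next we transport the chimney along $\phi$. Consider the map $\Phi(t,x) = (t,\phi(x))$. It is a lattice-preserving affine isomorphism of $\R\times\R^n$, and it sends $\Ch{f'}{0}{Q'}$ onto $\Ch{f'\circ\phi^{-1}}{0}{N}$. The functional $f'\circ\phi^{-1}$ is integral because $\phi^{-1}$ preserves $\Z^n$. The functional $0$ is integral, and $f'\ge 0$ on $Q'$. So the image is an integral chimney over a Nakajima polytope, and hence it is Nakajima. Smoothness of this image needs no extra work: it is lattice isomorphic to the smooth polytope $P$. This completes the induction.

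The main subtlety is the reduction step where \Cref{lm:smooth_chimney} is invoked. Two things need care there. The hypothesis ``affinely isomorphic to $\Ch{f}{g}{Q}$'' must be matched with the lemma's setup, including the case where the chimney is degenerate ($f=g$ somewhere); the lemma as stated already allows this. The lemma also requires an ambient-dimension convention, and we must check that its output $Q'$ is both smooth and affinely isomorphic to $P_{m-1}$, so that the induction hypothesis applies. Everything else in the argument is bookkeeping with lattice isomorphisms.
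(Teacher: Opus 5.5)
Your proof is correct and is the ``easy induction based on \Cref{lm:smooth_chimney}'' that the paper has in mind. You use the lemma to peel off the last chimney, with an integral top functional and zero bottom over a smooth base $Q'$, apply the inductive hypothesis to $Q'$, and transport the integral chimney along the lattice isomorphism $(t,x)\mapsto(t,\phi(x))$; the ambient-dimension bookkeeping you flag is harmless, since appending zero coordinates preserves smoothness, lattice isomorphism class, and the Nakajima property.
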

\begin{Prop}\label[Prop]{prop:WMS_of_Nakajima}
    Every weak Minkowski summand of an iterated chimney polytope is again an iterated chimney polytope.
    Every lattice weak Minkowski summand of a Nakajima polytope is again a Nakajima polytope.
\end{Prop}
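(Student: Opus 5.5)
The plan is to induct on the length $m$ of a chimney sequence $P_0, P_1, \dots, P_m = P$ witnessing that $P$ is an iterated chimney (resp.\ Nakajima) polytope. The main tools are \Cref{lm:chimney_summand} and \Cref{cor:integral_chimney_summand}.

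\emph{Base case.} If $m = 0$, then $P$ is a point. Its normal fan consists of the single cone $\R^n$. So any weak Minkowski summand $Q$ has $h_Q$ linear on all of $\R^n$, which forces $Q$ to be a point. In the lattice case, $Q$ is then a lattice point.

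\emph{Inductive step.} For $m \geq 1$, write $P = \Ch{f}{g}{P'}$ with $P' = P_{m-1}$ an iterated chimney, and $f \geq g$ on $P'$. The obstacle is that \Cref{lm:chimney_summand} only describes weak Minkowski summands of \emph{strict} chimneys, while the chimney here may fail to be strict.

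To get around this, pass to the auxiliary chimney $\widehat P \deq \Ch{f+1}{g}{P'}$. It is strict, since $f+1 > g$ on $P'$. By the easy direction of \Cref{lm:chimney_summand}, applied with $Q = P'$, $b = -1$ and $c = 0$, the polytope $P$ is a weak Minkowski summand of $\widehat P$. A weak Minkowski summand of a weak Minkowski summand is again one, so any weak Minkowski summand $R$ of $P$ is a weak Minkowski summand of $\widehat P$. Applying the converse direction of \Cref{lm:chimney_summand} then gives
\[
    R = \Ch{f+1+b}{g+c}{Q}
\]
for some weak Minkowski summand $Q$ of $P'$ and some constants $b, c$ with $f+1+b \geq g+c$ on $Q$. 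By the inductive hypothesis, $Q$ is an iterated chimney. Hence $R$, being a chimney over $Q$, is an iterated chimney as well.

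\emph{Nakajima case.} Here we may take $f$ and $g$ integral and $P'$ Nakajima. Then $f+1$ is integral too, so $\widehat P$ is a strict integral chimney over the lattice polytope $P'$. Let $R$ be a lattice weak Minkowski summand of $P$; as above, it is also one of $\widehat P$. By \Cref{cor:integral_chimney_summand}, $R = \Ch{f+1+b}{g+c}{Q}$ with $Q$ a lattice weak Minkowski summand of $P'$ and $b, c \in \Z$. The functionals $f+1+b$ and $g+c$ are integral, so $R$ is an integral chimney over $Q$. By induction $Q$ is Nakajima, and therefore so is $R$.

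The only genuinely delicate point is the non-strictness issue, which the auxiliary strict chimney $\widehat P$ resolves. Everything else is bookkeeping of the inductive chain.
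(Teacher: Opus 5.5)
Your proof is correct and is essentially the paper's argument: pass to the strict chimney $\Ch{f+1}{g}{P'}$, of which $P$ is a weak Minkowski summand, and then apply \Cref{lm:chimney_summand} (resp.\ \Cref{cor:integral_chimney_summand}) together with induction. The only cosmetic differences are that you induct on the length of the chimney sequence rather than on dimension, which amounts to the same thing, and that you spell out the base case explicitly.
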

\begin{proof}
We proceed by induction on dimension. 
Let $P$ be an iterated chimney polytope.
We may write $P = \Ch{f}{g}{P_0}$ for some iterated chimney polytope $P_0$ and affine functionals $f,g$.
Then $P' \deq \Ch{f+1}{g}{P_0}$ is a strict chimney polytope over $P_0$. The polytope $P$ is a weak Minkowski summand of $P'$ by \Cref{lm:chimney_summand}. Since any weak Minkowski summand of $P$ is a weak Minkowski summand of $P'$, it suffices to prove the claim for $P'$. By another use of \Cref{lm:chimney_summand}, if $Q$ is a weak Minkowski summand of $P'$ then  
$Q = \Ch{f+b}{g+c}{Q_0}$ for some weak Minkowski summand $Q_0$ of $P_0$ and constants $b,c \in \mathbb{R}$.
By induction, $Q_0$ must be an iterated chimney polytope. Hence so is $Q$.

For the second claim, the proof proceeds along similar lines. In this case, the polytope $P_0$ is assumed Nakajima and $f,g$ are assumed integral. Then $P'$ will be Nakajima as well.
By \Cref{cor:integral_chimney_summand}, 
a lattice weak Minkowski summand $Q$ of $P'$ takes the form 
$\Ch{f+b}{g+c}{Q_0}$ for some \emph{integers} $b, c$ and $Q_0$ a lattice weak Minkowski summand of $P_0$.
By the induction hypothesis, $Q_0$ is Nakajima. Hence $Q$ is Nakajima.
\end{proof}
\noindent
The following lemma shows that the chimney and Cayley operations commute.
\begin{Lm}\label[Lm]{lm:CCCC}
    Let $P, Q$ be polytopes in $\R^{n}$, $f, g$ affine functionals on $\R^{n}$,
    $b,c\in \mathbb{R}$ arbitrary and $k$ a positive real.
    Assume $f \geq g$ holds on $P$ and 
    $f+b \geq g+c$ holds on $Q$.
    Then
    \[
        \Cay_k\left(\Ch{f}{g}{P}, \Ch{f+b}{g+c}{Q}\right)
        = \Ch{\tilde f}{\tilde g}{\Cay_k(P, Q)}
    \]
    where $\tilde f, \tilde g$ are the affine-linear functionals on $\R^{n} \times \R$ given by
    \[
        \tilde f: (x, t) \mapsto f(x) + \frac{b}{k}t, 
        \qquad 
        \tilde g: (x, t) \mapsto g(x) + \frac{c}{k}t.
    \]
    In particular, $\tilde f \geq \tilde g$ holds on $\Cay_k(P,Q)$.
\end{Lm}
\begin{proof}
    The last statement may be readily verified on the vertices of $\Cay_k(P,Q)$, which are those of $P \times \{0\}$ and $Q \times \{k\}$.
    Similarly, since the polytopes being equated both have their vertices lying in the union of the two affine hyperplanes $\{x_{n+1} = 0\}$ and 
    $\{x_{n+1} = k\}$, it suffices to check that their intersections with each of these hyperplanes are equal.
    The intersections with $\{x_{n+1} = 0\}$ are 
    $\Ch{f}{g}{P}$ and $\Ch{\tilde f}{\tilde g}{P}$.
    Those with $\{x_{n+1} = k\}$ are
    $\Ch{f+b}{g+c}{Q}$ and $\Ch{\tilde f}{\tilde g}{Q}$.
    Since $\tilde f$ and $\tilde g$ restrict on $\{x_{n+1} = 0\}$ to $f$ and $g$ respectively, and on
    $\{x_{n+1} = k\}$ to $f + b$ and $g + c$ respectively, they are in agreement.
\end{proof}
\noindent
Using \Cref{obs:nested_cayley}, \Cref{lm:CCCC} easily generalizes to arbitrary generalized  Cayley sums.
\begin{Cor}\label[Cor]{cor:gCCCC}
    Let $P_0, \dots, P_m$ be polytopes in $\R^{n}$, $f, g$ affine functionals on $\R^{n}$, $k$ a positive real and 
    $b_0, \dots, b_m, c_0, \dots, c_m \in \mathbb{R}$ arbitrary.
    Then there exist affine functionals $\tilde f, \tilde g$ on $\mathbb{R}^n \times \mathbb{R}^m$ which depend only on $f,g$, $k$ and the constants $b_i, c_j$ such that 
    \[
        \Cay_k\left( \Ch{f + b_0}{g + c_0}{P_0}, \dots, \Ch{f+b_m}{g+c_m}{P_m} \right)
        = \Ch{\tilde f}{\tilde g}{\Cay_k(P_0, \dots, P_m)}.
    \]
\end{Cor}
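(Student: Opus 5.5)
We argue by induction on $m$, using \Cref{obs:nested_cayley} to reduce to the binary case treated in \Cref{lm:CCCC}. For $m = 1$ the statement is exactly \Cref{lm:CCCC} applied with $P = P_0$, $Q = P_1$, the functionals $f + b_0$ and $g + c_0$ in place of $f$ and $g$, and the constants $b_1 - b_0$ and $c_1 - c_0$ in place of $b$ and $c$. The resulting functionals $\tilde f, \tilde g$ on $\R^n \times \R$ depend only on $f, g, k$ and the constants. (The case $m = 0$ is trivial.) We assume throughout, as is implicit in the chimney notation, that $f + b_i \geq g + c_i$ holds on each $P_i$.

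For the inductive step, suppose that $\tilde f', \tilde g'$ on $\R^n \times \R^{m-1}$ satisfy
\[
\Cay_k\big(\Ch{f+b_0}{g+c_0}{P_0}, \dots, \Ch{f+b_{m-1}}{g+c_{m-1}}{P_{m-1}}\big) = \Ch{\tilde f'}{\tilde g'}{\Cay_k(P_0, \dots, P_{m-1})}.
\]
The point to check is that $\Ch{f+b_m}{g+c_m}{P_m} \times \{0\}^{m-1}$ is a chimney over $P_m \times \{0\}^{m-1}$ with respect to the \emph{same} functionals $\tilde f' + b'$ and $\tilde g' + c'$, for suitable constants $b', c'$. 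This holds because an affine functional on $\R^n \times \R^{m-1}$ restricted to $\R^n \times \{0\}^{m-1}$ differs from $f$ (respectively $g$) by a constant. To see this, by construction $\tilde f'$ restricts to $f + b_0$ on $\R^n \times \{0\}^{m-1}$, which I would carry along as part of the inductive hypothesis. So we may take $b' = b_m - b_0$ and $c' = c_m - c_0$.

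There is a bookkeeping subtlety with the coordinate ordering: the chimney coordinate is placed first, while the Cayley coordinates are appended last. So identifying $(\R \times \R^n) \times \R^{m-1}$ with $\R \times (\R^n \times \R^{m-1})$ is just a reordering of coordinates, consistent with how \Cref{lm:CCCC} is stated. Applying \Cref{lm:CCCC} once more, now with $\Cay_k(P_0,\dots,P_{m-1})$ and $P_m \times \{0\}^{m-1}$, and then \Cref{obs:nested_cayley} on both sides, gives the claim.

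Explicitly, the resulting functionals are
\[
\tilde f(x,t) = f(x) + b_0 + \sum_{i=1}^m \tfrac{b_i - b_0}{k}\, t_i,
\]
and similarly for $\tilde g$. One could instead verify these formulas directly on the vertices $P_i \times \{s_i\}$, as in the proof of \Cref{lm:CCCC}: both sides have vertices lying over the points $s_i$, and their slices over $s_i$ agree. This direct check is probably the cleanest route, and the only real obstacle is keeping the coordinate conventions straight.
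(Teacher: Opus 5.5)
Your proposal is correct and follows the paper's route. The paper's entire proof is the remark that \Cref{lm:CCCC} extends to $m+1$ summands via \Cref{obs:nested_cayley}, which is exactly your induction on $m$. Your bookkeeping fills in the details the paper leaves implicit: the restriction of $\tilde f'$ to $\R^n \times \{0\}^{m-1}$ being $f+b_0$ (so that $b' = b_m - b_0$, $c' = c_m - c_0$), and the closed formula $\tilde f(x,t) = f(x) + b_0 + \sum_{i=1}^m \frac{b_i - b_0}{k} t_i$. Your alternative slice-by-slice check over the points $s_i$ is also valid.
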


\begin{Prop}\label[Prop]{prop:Cayley_commute}
    A coherent Cayley sum of iterated chimney polytopes is again an iterated chimney polytope.
\end{Prop}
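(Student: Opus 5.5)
We argue by induction on the common dimension of $P_0, \dots, P_m$; more precisely, on the length of a chimney sequence for $P_0$. Let $\Cay_k(P_0, \dots, P_m)$ be a coherent Cayley sum in which each $P_i$ is an iterated chimney polytope. In the base case $P_0$ is a point, so all $P_i$ are points. Then $\Cay_k(P_0,\dots,P_m)$ is a translate of a dilated standard $m$-simplex. This is an iterated chimney: $\Conv(0, ke_1, \dots, ke_j)$ is the chimney over $\Conv(0, ke_1, \dots, ke_{j-1})$ with lower functional $0$ and upper functional $k - x_1 - \dots - x_{j-1}$. The new coordinate must be placed in front, which is harmless after permuting coordinates. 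Translation preserves being an iterated chimney, since one can translate the base and shift the functionals.

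For the inductive step, write $P_0 = \Ch{f}{g}{R_0}$ with $R_0$ an iterated chimney. First reduce to the case where this chimney is \emph{strict}. If $f \geq g$ with equality somewhere, then $P_0$ is a weak Minkowski summand of a strict chimney; but we need the $P_i$ themselves, not summands, so we instead use the structure directly. The key step is to show that each $P_i$ has the form $\Ch{f+b_i}{g+c_i}{R_i}$, with $R_i$ Minkowski equivalent to $R_0$ and $R_i$ again an iterated chimney.

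\begin{itemize}
\item When $P_0$ is a strict chimney, this is exactly \Cref{lm:MEQchimney}. The $R_i$ are then iterated chimneys, because being an iterated chimney should be invariant under Minkowski equivalence; this is provable by the same lemma inductively.
\item When the chimney is not strict, use \Cref{lm:chimney_summand} applied to the strict chimney $\Ch{f+1}{g}{R_0}$. Each $P_i$ is a weak Minkowski summand of $P_0$, which is a weak Minkowski summand of that strict chimney. So each $P_i$ again has the form $\Ch{f+b_i}{g+c_i}{R_i}$, with $R_i$ the projection of $P_i$, and the $R_i$ are Minkowski equivalent by \Cref{prop:MEQaffine}.
\end{itemize}

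Then apply \Cref{cor:gCCCC}:
\[
\Cay_k(P_0,\dots,P_m) = \Ch{\tilde f}{\tilde g}{\Cay_k(R_0,\dots,R_m)}.
\]
Here $\Cay_k(R_0,\dots,R_m)$ is a coherent Cayley sum of iterated chimneys of smaller dimension, hence an iterated chimney by induction. That $\tilde f \geq \tilde g$ on the base follows from checking vertices, as in \Cref{lm:CCCC}. A chimney over an iterated chimney is an iterated chimney, completing the induction. Coordinate ordering differs between the two sides: Cayley sums append coordinates at the end while chimneys prepend. I will handle this by noting that all notions are invariant under coordinate permutation, or by interpreting equalities up to such a permutation, as the paper implicitly does.

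The main obstacle is ensuring the $R_i$ are iterated chimneys with a common description, i.e.\ that Minkowski equivalence preserves iterated chimney structure. I would prove this as an auxiliary claim by induction using \Cref{lm:MEQchimney}, after first reducing non-strict chimneys. If a non-strict step collapses dimension, I would rewrite the chimney sequence to skip degenerate steps, or invoke \Cref{prop:WMS_of_Nakajima}'s first part: a polytope Minkowski equivalent to an iterated chimney is in particular a weak Minkowski summand of one, hence an iterated chimney. That proposition settles the obstacle cleanly, so I would use it.
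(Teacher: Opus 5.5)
Your proposal is correct and is essentially the paper's proof. The paper likewise writes $P_0=\Ch{f}{g}{R_0}$ and applies \Cref{lm:chimney_summand} to the strict chimney $\Ch{f+1}{g}{R_0}$ to get $P_i=\Ch{f+b_i}{g+c_i}{R_i}$, with $R_i$ Minkowski equivalent to $R_0$ by \Cref{prop:MEQaffine} and hence iterated chimneys by \Cref{prop:WMS_of_Nakajima}, and then concludes with \Cref{cor:gCCCC} and induction.

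Your strict/non-strict case split is unnecessary, since your second bullet already covers both cases. Your induction on the length of the chimney sequence (i.e.\ the ambient dimension), together with the explicit simplex base case, is if anything slightly more careful than the paper's induction on $\dim P$. That measure need not drop when $\tilde f=\tilde g$ on the base, and it skips the case where the summands are points but $m\geq 1$.
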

\begin{proof}
    We proceed by induction on the dimension of the Cayley sum $P$.
     We may assume $\dim P > 0$ and write $P = \Cay_k(Q_0, \dots, Q_m)$ for a positive real number $k$ and Minkowski equivalent iterated chimney polytopes $Q_0, \dots, Q_m$.
    We have $Q_0 = \Ch{f}{g}{R}$ for some iterated chimney polytope $R$ and affine functionals $f,g$.
    For each $i$, $Q_i$ is Minkowski equivalent to $Q_0$ which is a weak Minkowski summand of $\Ch{f+1}{g}{R}$.
    We can therefore apply \Cref{lm:chimney_summand} to write 
    $Q_i = \Ch{f + b_i}{g + c_i}{R_i}$ for some real numbers $b_1, \dots, b_m, c_1, \dots, c_m$ and weak Minkowski summands $R_1, \dots, R_m$ of $R$. 
    By considering the projections of $Q_i$ down to $R_i$ and using \Cref{prop:MEQaffine}, we conclude that $R_i$ is in fact Minkowski equivalent to $R$.
    By \Cref{prop:WMS_of_Nakajima}, each $R_i$ is an iterated chimney polytope. By \Cref{cor:gCCCC}, we have
    \[
        P = \Cay_k\left( \Ch{f}{g}{R}, \Ch{f+b_1}{g+c_1}{R_1}, \dots, \Ch{f+b_m}{g+c_m}{R_m} \right)
        = \Ch{\tilde f}{\tilde g}{\Cay_k(R, R_1, \dots, R_m)}
    \]
    for some affine functionals $\tilde f, \tilde g$. By the induction hypothesis, $\Cay_k(R, R_1, \dots, R_m)$ is an iterated chimney polytope. Hence so is $P$.
\end{proof}
\noindent
An easy induction based on \Cref{prop:Cayley_commute} gives:
\begin{Cor}\label[Cor]{cor:tower=iterated}
    Every Cayley tower is an iterated chimney polytope.
\end{Cor}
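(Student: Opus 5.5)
The plan is an induction on the dimension of the Cayley tower $P$, in the form: every Cayley tower is an iterated chimney polytope.

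\emph{Base case.} If $P$ is a single point, it is an iterated chimney polytope by \Cref{df:iterated_chimney}, taking the sequence of length zero.

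\emph{Inductive step.} Suppose $\dim P > 0$. By \Cref{df:Cayley_tower}, $P = \Cay_k(P_0, \dots, P_m)$ is a coherent Cayley sum with $m \geq 1$, where each $P_j$ is a translate of a Cayley tower and $\dim P_j < \dim P$. I would carry out the following steps.

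\begin{enumerate}
    \item By the induction hypothesis, each Cayley tower underlying $P_j$ is an iterated chimney polytope.
    \item Translates of iterated chimney polytopes are again iterated chimney polytopes. Translating $\Ch{f}{g}{Q}$ by $(t_0, t)$ gives $\Ch{f'}{g'}{Q + t}$, where $f'(x) = f(x - t) + t_0$ and $g'(x) = g(x - t) + t_0$ are again affine functionals. The translation of $Q$ is handled recursively, with a translated point at the bottom of the sequence.
    \item Hence $P$ is a coherent Cayley sum of iterated chimney polytopes.
    \item \Cref{prop:Cayley_commute} then shows that $P$ is an iterated chimney polytope.
\end{enumerate}

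The induction is legitimate because each summand has strictly smaller dimension, and \Cref{prop:Cayley_commute} has no dimension restriction.

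\emph{Main obstacle.} The only nontrivial step is translation invariance in step 2, and even this is routine. An alternative is to apply \Cref{lm:MEQchimney}: a translate is Minkowski equivalent to the original polytope, so it is a chimney over a Minkowski-equivalent base. Either way the argument is short. Coherence of the sum is inherited directly from the definition of a Cayley tower, so nothing further needs to be checked before invoking \Cref{prop:Cayley_commute}.
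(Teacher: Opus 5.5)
Your proposal is correct and is essentially the ``easy induction based on \Cref{prop:Cayley_commute}'' the paper has in mind: unwind the definition of a Cayley tower, apply the induction hypothesis to the summands, and invoke \Cref{prop:Cayley_commute}. Your explicit check that translates of iterated chimneys are again iterated chimneys is a detail the paper leaves implicit; it also follows from \Cref{prop:WMS_of_Nakajima}, since a translate is Minkowski equivalent to, and hence a weak Minkowski summand of, the original.
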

\noindent
We are now ready to prove our main result.

\begin{proof}[Proof of \Cref{thm:Nakajima}]
    We prove the two implications 
    \ref{it:WMS_SCPS}$\Rightarrow$\ref{it:Nakajima} and 
    \ref{it:Nakajima}$\Rightarrow$\ref{it:WMS_SCC}. The remaining implication 
    \ref{it:WMS_SCC}$\Rightarrow$\ref{it:WMS_SCPS} is trivial.

    To prove \ref{it:WMS_SCPS}$\Rightarrow$\ref{it:Nakajima}, it will suffice by \Cref{prop:WMS_of_Nakajima} to show that a smooth combinatorial product of simplices is lattice isomorphic to a Nakajima polytope.
    We argue that all smooth integral Cayley towers are Nakajima up to a lattice isomorphism, which suffices by \Cref{thm:struct_thm_smooth}.
    This follows from the combination of \Cref{cor:tower=iterated} and \Cref{cor:smooth=>Nakajima}.
    
    To prove \ref{it:Nakajima}$\Rightarrow$\ref{it:WMS_SCC}, we need only show that every Nakajima polytope is a weak Minkowski summand of a smooth combinatorial cube. 
    Now for any Nakajima polytope $P$ in $\mathbb{R}^n$, we can find a sequence of Nakajima polytopes 
    $P_0, P_1, \dots, P_n = P$ where 
    $P_0$ is a point and for each $i$ we have $P_{i+1} = \Ch{f_i}{g_i}{P_i}$ for some integral affine functionals 
    $f_i, g_i$ on $\mathbb{R}^i$.
    Now we inductively construct a sequence of polytopes 
    $P_0', \dots, P_n'$ by setting $P_0' = P_0$
    and $P_{i+1}' = \Ch{f_i + c_i}{g_i}{P_i'}$ where $c_i$ is a positive integer chosen so that $P_{i+1}'$ is a strict chimney over $P_i'$.
    Then by induction we observe:
    \begin{itemize}
        \item For each $i$, the polytope $P_i'$ is combinatorially isomorphic to an $i$-dimensional cube.
        \item For each $i$, the polytope $P_i'$ is smooth (using \Cref{lm:strict=smooth}).
        \item For each $i$, the polytope $P_i$ is a weak Minkowski summand of $P_i'$ (using \Cref{lm:chimney_summand}).
    \end{itemize}
    Setting $i = n$ gives the desired conclusion.
\end{proof}
\noindent
The argument for \ref{it:Nakajima}$\Rightarrow$\ref{it:WMS_SCC} above  shows that every iterated chimney polytope is a weak Minkowski summand of an iterated chimney polytope combinatorially isomorphic to a cube.

\begin{proof}[Proof of \Cref{cor:unimodulartriangulation}]
By \Cref{thm:Nakajima}, $P$ is Nakajima.
Corollary 2.9 in \cite{haase21} states that every Nakajima polytope has a quadratic triangulation.
\end{proof}

\begin{Lm}\label[Lm]{lm:cayley_nakajima}
    Let $P$ be a Nakajima polytope. 
    Let $Q_1, Q_2$ be lattice weak Minkowski summands of $P$.
    Then $\Cay(Q_1, Q_2)$ is Nakajima.
\end{Lm}
\begin{proof}
    We proceed by induction on the dimension of $P$.
    The case $\dim P = 0$ is trivial. In general, write $P = \Ch{f}{g}{P_0}$ for $P_0$ a Nakajima polytope and $f, g$ integral affine functionals.
    Replacing $P$ with $\Ch{f+1}{g}{P_0}$, we may assume that $P$ is a strict chimney over $P_0$.
    By \Cref{cor:integral_chimney_summand}, we have 
    $Q_i = \Ch{f+b_i}{g+c_i}{R_i}$ for $i = 1,2$
    where $b_i$ and $c_i$ are integers and $R_i$ is a lattice weak Minkowski summand of $P_0$.
    By \Cref{lm:CCCC}, we have
    \[
        \Cay(Q_1, Q_2) = \Cay\left(\Ch{f + b_1}{g + c_1}{R_1}, \Ch{f+b_2}{g+c_2}{R_2}\right)
        = \Ch{\tilde f}{\tilde g}{\Cay(R_1, R_2)}
    \]
    where $\tilde f, \tilde g: \mathbb{R}^n \times \mathbb{R} \to \mathbb{R}$ are given by
    \[
        \tilde f(x, t) = f(x) + b_1 + (b_2 - b_1)t, 
        \qquad 
        \tilde g(x, t) = g(x) + c_1 + (c_2 - c_1)t.
    \]
    Observing that $\tilde f, \tilde g$ are integral and that $\Cay(R_1, R_2)$ is Nakajima by the induction hypothesis, we conclude that $\Cay(Q_1, Q_2)$ is Nakajima.
\end{proof}

\begin{proof}[{Proof of \Cref{thm:absolute_ODA}}]
    It suffices to prove the first claim since the second follows by taking $Q' = kQ$ for positive integers $k$.
    We employ the so-called \emph{Cayley trick} to reduce from two polytopes to one. 
    Namely, two polytopes are relatively IDP provided their Cayley sum is IDP (see,  e.g., \cite{T23}*{Theorem 1.3(1)}). 
    By \Cref{thm:Nakajima}, we may assume that $Q$ and $Q'$ are Nakajima.
    By \Cref{lm:cayley_nakajima}, 
    $\Cay(Q, Q')$ is then also Nakajima. By \Cref{cor:unimodulartriangulation}, it therefore has a unimodular triangulation.
    Since a unimodular simplex is IDP, and since a lattice polytope covered by IDP lattice polytopes is IDP, $\Cay(Q, Q')$ is IDP.
\end{proof}

\section{Relationship to the literature} \label{sec:literature}

In this section, we interpret the preceding theorems and constructions in the language of toric varieties, thereby linking our results to others in the literature.
Indeed, the disparate presentations of results on this topic in the literature have proven a surprising hindrance to a full understanding of what is known about smooth combinatorial products of simplices and their weak Minkowski summands. 
Our aim in this section is, in part, to begin to remedy this state of affairs.
A full treatment of toric geometry and its connection to the combinatorial theory of polytopes is beyond the scope of this short section; we recommend the article \cite{Cox03} for an approachable introduction, and the books \cites{fulton, CLS11} for in-depth expositions.

For simplicity, we work in the category of complex varieties.
As explained in \cite{fulton}*{Section 1}, to each strictly convex rational fan $\Sigma$ in $\mathbb{R}^n$ one associates a normal toric variety $X_{\Sigma}$. Conversely, every normal toric variety arises from such a fan, which is unique up to lattice isomorphism.
The toric variety $X_{\Sigma}$ is \define{affine} if and only if $\Sigma$ consists of a single cone and its faces, and it is \define{complete} if and only if $\Sigma$ is complete. 
$X_{\Sigma}$ is \define{projective} if and only if $\Sigma = \Sigma_P$ for some full-dimensional polytope $P$ in $\R^n$; 
in this case, the polytope can always be chosen lattice.
The toric variety $X_{\Sigma}$ is \define{smooth} if and only if $\Sigma$ is smooth. 

A lattice polytope $P$ in $\mathbb{R}^n$ determines a \define{polarization}, i.e.\ an ample toric line bundle $\mathcal{L}_P$ on $X_P \deq X_{\Sigma_P}$.
If $P$ is smooth, $\mathcal{L}_P$ will in fact be very ample.
In general, this will be true after rescaling $P$ by a large enough positive integer $k$; this corresponds to replacing $\mathcal{L}_P$ with the tensor power 
$\mathcal{L}_P^{\otimes k}$. If $\mathcal{L}_P$ is very ample, the corresponding complete linear system equivariantly embeds $X_P$ inside a projective space $\mathbb{P}^N$. The affine cone $CX_P$ over $X_P$ in $\mathbb{A}^{N+1}$ under this embedding is again a toric variety, but may not be normal.
In fact, normality of $CX_P$ is \emph{equivalent} to $P$ being IDP.
The \define{normalization} $\widetilde{CX_P}$ is an affine normal toric variety. It corresponds to the cone in $\mathbb{R}^{n+1}$ which is the \define{epigraph} of the support function $h_{P}$ on $\Sigma_P \subseteq \mathbb{R}^n$.
Nakajima \cite{Nakajima} showed that $\widetilde{CX_P}$ is a complete intersection variety if and only if $P$ is Nakajima.

Suppose $X = X_P$ is a projective normal toric variety given by a complete rational fan $\Sigma$. Let $P_0, \dots, P_m$ be lattice polytopes with normal fan $\Sigma$.
Each $P_i$ gives a toric line bundle $\mathcal{L}_{i} \deq \mathcal{L}_{P_i}$.
Let $\mathcal{E}$ be the total space of the vector bundle $\mathcal{L}_0 \oplus \dots \oplus \mathcal{L}_m$. The relative projectivization 
$\mathbb{P}_X\mathcal{E}$ (which projectivizes each fiber of $\mathcal{E} \to X$) is then again a toric variety with a canonical polarization given by its tautological line bundle. The lattice polytope which represents this line bundle is the Cayley sum 
$\Cay(P_0, \dots, P_m)$; cf.\ \cite{Cox97}*{\textsection~3}. Thus the toric variety corresponding to an integral Cayley tower is one obtained by recursively taking projective bundles (coming from split toric bundles), starting with a point. The varieties constructed in this way are exactly the toric generalized Bott towers. These were studied in the more general setting of \define{quasitoric manifolds} in \cites{quasitoric1, quasitoric2}.
In this language, \Cref{thm:struct_thm_smooth} reads:
\theoremstyle{plain}
\newtheorem*{structure_thm_smooth_restatement}{Theorem \ref{thm:struct_thm_smooth}} 
\begin{structure_thm_smooth_restatement}
    Let $X$ be a smooth projective toric variety associated to a polytope $P$ which is a combinatorial product of simplices. Then $X$ is a generalized Bott tower.
\end{structure_thm_smooth_restatement}
\noindent
This is the special case for toric varieties of \cite{quasitoric2}*{Theorem 6.4}.\footnote{Strictly speaking, Choi--Masuda--Suh state that $X$ is a generalized Bott tower up to equivariant homeomorphism. From their methods however, one can deduce in this case that it is up to isomorphism of toric varieties in our setting. This is also naturally a consequence of \Cref{thm:struct_thm_smooth}.}
Without using this nomenclature, Batyrev \cite{batyrev91} had earlier proved an equivalent result. \cite{batyrev91}*{Corollary 4.4} shows that a smooth complete toric variety is a generalized Bott tower as soon as the minimal non-cones in its fan (there called \define{primitive collections}) are pairwise disjoint, the defining property of what he calls a \define{splitting fan}.
For a complete fan in $\mathbb{R}^n$, this condition says precisely that the combinatorial type of the fan is that of the normal fan of a product of simplices.

We turn now to Oda's conjectures. In the language of toric varieties, the conjectures are:
\theoremstyle{plain}
\newtheorem*{Oda_Restatement}{Conjecture \ref{conj:oda}} 
\begin{Oda_Restatement}
Let $X_P$ be a smooth projective toric variety coming from a smooth polytope $P$
corresponding to the ample toric line bundle $\mathcal{L}_P$.
\begin{enumerate}
    \item[(a)] The projective embedding of $X_P$ given by the complete linear system of $\mathcal{L}_P$ is projectively normal. Equivalently, for every positive integer $k$, the multiplication map
        $$
            H^0 ( X_P, \mathcal{L}_P) \otimes \cdots \otimes H^0 ( X_P, \mathcal{L}_P) \to 
            H^0 ( X_P, \mathcal{L}_P^{\otimes k})
        $$
    is surjective.

    \item[(b)] If $\c L'$ is any other ample toric line bundle on $X_P$, the multiplication map
        $$
            H^0 ( X_P, \mathcal{L}_P) \otimes H^0 ( X_P, \c L') \to 
            H^0 ( X_P, \c L_P \otimes \c L')
        $$
    is surjective.
    \item[(c)] If $\c N$ is a nef toric line bundle on $X_P$, the multiplication map 
        $$
            H^0 ( X_P, \mathcal{L}_P) \otimes H^0 ( X_P, \c N) \to 
            H^0 ( X_P, \c L_P \otimes \c N)
        $$
    is surjective.
\end{enumerate}
\end{Oda_Restatement} 
\noindent In this language, the translation of \Cref{thm:absolute_ODA} is:

\theoremstyle{plain}
\newtheorem*{abs_oda_restatement}{Theorem \ref{thm:absolute_ODA}} 
\begin{abs_oda_restatement}
If $\c N$ and $\c N'$ are nef toric line bundles on $X_P$, then the multiplication map 
        $$
            H^0 ( X_P, \mathcal{N}) \otimes H^0 ( X_P, \c N') \to 
            H^0 ( X_P, \c N \otimes \c N')
        $$
    is surjective.
\end{abs_oda_restatement}
\noindent This is Property 2.1 in \cite{ikeda09}.
Its validity for generalized Bott towers is precisely \cite{ikeda09}*{Corollary 4.2}.

\section*{Acknowledgments}
The authors would like to thank Teddy Gonzales for invaluable discussions.
We extend our gratitude to Sharon Robins and Mathieu Vall\'ee for pointing us to a number of useful references in the literature.
We are grateful to Matt Larson and June Huh for interesting conversations, particularly concerning \Cref{thm:struct_thm_smooth}.
Last but not least, we wish to thank Gaku Liu for his guidance throughout this project.

\bibstyle{amsalpha}
\bibliography{bib}

% \bib, bibdiv, biblist are defined by the amsrefs package.
\begin{bibdiv}
\begin{biblist}

\bib{batyrev91}{article}{
      author={Batyrev, V.~V.},
       title={On the classification of smooth projective toric varieties},
        date={1991},
     journal={Tohoku Math. J. (2)},
      volume={43},
      number={4},
       pages={569\ndash 585},
}

\bib{Bogart_2015}{article}{
      author={Bogart, T.},
      author={Haase, C.},
      author={Hering, M.},
      author={Lorenz, B.},
      author={Nill, B.},
      author={Paffenholz, A.},
      author={Rote, G.},
      author={Santos, F.},
      author={Schenck, H.},
       title={Finitely many smooth d-polytopes with n lattice points},
        date={2015},
     journal={Israel J. Math.},
      volume={207},
      number={1},
       pages={301\ndash 329},
}

\bib{beck19}{article}{
      author={Beck, M.},
      author={Haase, C.},
      author={Higashitani, A.},
      author={Hofscheier, J.},
      author={Jochemko, K.},
      author={Katth\"{a}n, L.},
      author={Micha{\l}ek, M.},
       title={Smooth centrally symmetric polytopes in dimension 3 are {IDP}},
        date={2019},
     journal={Ann. Comb.},
      volume={23},
      number={2},
       pages={255\ndash 262},
}

\bib{Cox97}{article}{
      author={Cattani, E.},
      author={Cox, D.},
      author={Dickenstein, A.},
       title={Residues in toric varieties},
        date={1997},
     journal={Compos. Math.},
      volume={108},
       pages={35\ndash 76},
}

\bib{CLS11}{book}{
      author={Cox, D.~A.},
      author={Little, J.~B.},
      author={Schenck, H.~K.},
       title={Toric varieties},
      series={Grad. Stud. Math.},
   publisher={Amer. Math. Soc., Providence, RI},
        date={2011},
      volume={124},
}

\bib{quasitoric2}{article}{
      author={Choi, S.},
      author={Masuda, M.},
      author={Suh, D.~Y.},
       title={Quasitoric manifolds over a product of simplices},
        date={2010},
     journal={Osaka J. Math.},
      volume={47},
      number={1},
       pages={109\ndash 129},
}

\bib{quasitoric1}{article}{
      author={Choi, S.},
      author={Masuda, M.},
      author={Suh, D.~Y.},
       title={Topological classification of generalized {B}ott towers},
        date={2010},
     journal={Trans. Amer. Math. Soc.},
      volume={362},
      number={2},
       pages={1097\ndash 1112},
}

\bib{Cox03}{incollection}{
      author={Cox, D.},
       title={What is a toric variety?},
        date={2003},
   booktitle={Topics in algebraic geometry and geometric modeling},
      series={Contemp. Math.},
      volume={334},
   publisher={Amer. Math. Soc., Providence, RI},
       pages={203\ndash 223},
}

\bib{coxeter}{article}{
      author={Coxeter, H. S.~M.},
       title={Discrete groups generated by reflections},
        date={1934},
     journal={Annals of Mathematics},
      volume={35},
      number={3},
       pages={588\ndash 621},
}

\bib{curtis_cubes}{misc}{
      author={Curtis, J.},
       title={Smooth combinatorial cubes are {IDP}},
        date={2025},
        note={arXiv:2509.02960},
}

\bib{dickenstein}{article}{
      author={Dickenstein, A.},
      author={Di~Rocco, S.},
      author={Piene, R.},
       title={Classifying smooth lattice polytopes via toric fibrations},
        date={2009},
        ISSN={0001-8708},
     journal={Advances in Mathematics},
      volume={222},
      number={1},
       pages={240\ndash 254},
}

\bib{classification}{article}{
      author={Dobrinskaya, N.~E.},
       title={Classification problem for quasitoric manifolds over a given simple polytope},
        date={2001},
     journal={Funct. Anal. Appl.},
      volume={35},
      number={2},
       pages={83\ndash 89},
}

\bib{fakhruddin02}{misc}{
      author={Fakhruddin, N.},
       title={Multiplication maps of linear systems on smooth projective toric surfaces},
        date={2002},
        note={arXiv:math/0208178},
}

\bib{F26}{misc}{
      author={Ferroni, L.},
       title={Unimodality shenanigans in {E}hrhart theory},
        date={2026},
        note={arXiv:2609.10513},
}

\bib{fulton}{book}{
      author={Fulton, W.},
       title={Introduction to toric varieties},
   publisher={Princeton Univ. Press},
        date={1993},
}

\bib{firla1999hilbert}{article}{
      author={Firla, R.~T.},
      author={Ziegler, G.~M.},
       title={Hilbert bases, unimodular triangulations, and binary covers of rational polyhedral cones},
        date={1999},
     journal={Discrete Comput. Geom.},
      volume={2},
      number={21},
       pages={205\ndash 216},
}

\bib{Teddy}{misc}{
      author={Gonzales, T.},
      author={Lowen, C.},
       title={Structural properties of {B}ia{\l}ynicki-{B}irula decompositions},
        date={2026},
        note={arXiv:2604.27634},
}

\bib{grunbaum}{book}{
      author={Gr\"unbaum, B.},
       title={Convex polytopes},
     edition={Second},
      series={Grad. Texts in Math.},
   publisher={Springer},
        date={2003},
      volume={221},
        note={Prepared and with a preface by Volker Kaibel, Victor Klee and G\"unter M.\ Ziegler},
}

\bib{gubeladze2012convex}{article}{
      author={Gubeladze, Joseph},
       title={Convex normality of rational polytopes with long edges},
        date={2012},
     journal={Advances in Mathematics},
      volume={230},
      number={1},
       pages={372\ndash 389},
}

\bib{haase2010generating}{incollection}{
      author={Haase, C.},
      author={Lorenz, B.},
      author={Paffenholz, A.},
       title={Generating smooth lattice polytopes},
        date={2010},
   booktitle={Mathematical software---{ICMS} 2010},
      series={Lecture Notes in Comput. Sci.},
      volume={6327},
   publisher={Springer},
       pages={315\ndash 328},
}

\bib{Haase08}{article}{
      author={Haase, C.},
      author={Nill, B.},
      author={Paffenholz, A.},
      author={Santos, F.},
       title={Lattice points in {M}inkowski sums},
        date={2008},
     journal={The Electronic Journal of Combinatorics},
      volume={15},
      number={1},
}

\bib{haase21}{book}{
      author={Haase, C.},
      author={Paffenholz, A.},
      author={Piechnik, L.},
      author={Santos, F.},
       title={Existence of unimodular triangulations---positive results},
   publisher={American Mathematical Society},
        date={2021},
      volume={270},
      number={1321},
}

\bib{ikeda09}{article}{
      author={Ikeda, A.},
       title={Subvarieties of generic hypersurfaces in a nonsingular projective toric variety},
        date={2009},
     journal={Mathematische Zeitschrift},
      volume={263},
      number={4},
       pages={923\ndash 937},
}

\bib{Lundman_2012}{article}{
      author={Lundman, A.},
       title={A classification of smooth convex 3-polytopes with at most 16 lattice points},
        date={2012},
     journal={J. Algebr. Comb.},
      volume={37},
      number={1},
       pages={139\ndash 165},
}

\bib{Nakajima}{article}{
      author={Nakajima, H.},
       title={Affine torus embeddings which are complete intersections},
        date={1986},
     journal={Tohoku Math. J. (2)},
      volume={38},
      number={1},
       pages={85\ndash 98},
}

\bib{oda08}{article}{
      author={Oda, T.},
       title={Problems on {M}inkowski sums of convex lattice polytopes},
        date={2008},
        note={arXiv:0812.1418},
}

\bib{oda88}{book}{
      author={Oda, T.},
       title={Convex bodies and algebraic geometry},
      series={Ergebnisse der Mathematik und ihrer Grenzgebiete (3) [Results in Mathematics and Related Areas (3)]},
   publisher={Springer},
        date={1988},
      volume={15},
        note={Translated from the Japanese},
}

\bib{S86}{incollection}{
      author={Stanley, R.~P.},
       title={Log-concave and unimodal sequences in algebra, combinatorics, and geometry},
        date={1989},
   booktitle={Graph theory and its applications: {E}ast and {W}est ({J}inan, 1986)},
      series={Ann. New York Acad. Sci.},
      volume={576},
   publisher={New York Acad. Sci., New York},
       pages={500\ndash 535},
}

\bib{SV13}{article}{
      author={Schepers, J.},
      author={Van~Langenhoven, L.},
       title={Unimodality questions for integrally closed lattice polytopes},
        date={2013},
     journal={Ann. Comb.},
      volume={17},
      number={3},
       pages={571\ndash 589},
}

\bib{T23}{article}{
      author={Tsuchiya, A.},
       title={Cayley sums and {M}inkowski sums of lattice polytopes},
        date={2023},
     journal={SIAM J. Discrete Math.},
      volume={37},
      number={2},
       pages={1348\ndash 1357},
}

\bib{wiemeler15}{article}{
      author={Wiemeler, M.},
       title={Torus manifolds and non-negative curvature},
        date={2015},
     journal={Journal of the London Mathematical Society},
      volume={91},
      number={3},
       pages={667\ndash 692},
}

\bib{Yu21}{article}{
      author={Yu, Li},
      author={Masuda, Mikiya},
       title={On descriptions of products of simplices},
        date={2021},
     journal={Chinese Annals of Mathematics, Series B},
      volume={42},
      number={5},
       pages={777\ndash 790},
}

\bib{ziegler95}{book}{
      author={Ziegler, G.~M.},
       title={Lectures on polytopes},
      series={Grad. Texts in Math.},
   publisher={Springer},
        date={1995},
      volume={152},
}

\end{biblist}
\end{bibdiv}
\end{document}